\documentclass[10pt,reqno]{amsart}
\usepackage[margin=1in]{geometry}

\usepackage{amsmath,amsthm,wasysym,bbold,tensor,mdframed}
\usepackage{amssymb}
\usepackage{amsfonts}
\usepackage{mathscinet}
\usepackage[dvipsnames]{xcolor}
\usepackage{tikz-cd}
\usepackage{tikz,keyval,pgfsys,graphicx}
\usepackage{mathtools, enumerate, combelow}
\usepackage[shortlabels]{enumitem}

\usetikzlibrary{arrows, decorations.pathreplacing}
\usepgflibrary{arrows}

\newtheorem{thm}{\normalfont\scshape Theorem}[section]

\newtheorem{prop}[thm]{\normalfont\scshape Proposition}
\newtheorem{lem}[thm]{\normalfont\scshape Lemma}
\newtheorem{cor}[thm]{\normalfont\scshape Corollary}

\theoremstyle{definition}
\newtheorem{defn}[thm]{\normalfont\scshape Definition}

\theoremstyle{remark}
\newtheorem{rem}[thm]{Remark}
\theoremstyle{remark}

\allowdisplaybreaks

\DeclareMathOperator{\TT}{\mathcal{T}}

\DeclareMathOperator{\DD}{\mathbb{D}}

\begin{document}

\binoppenalty=10000
\relpenalty=10000

\numberwithin{equation}{section}

\newcommand{\uuu}{\mathfrak{u}}
\newcommand{\ccc}{\mathfrak{c}}
\newcommand{\qqq}{\mathfrak{q}}
\newcommand{\ddd}{\mathfrak{d}}
\newcommand{\QQ}{\mathbb{Q}}
\newcommand{\ZZ}{\mathbb{Z}}
\newcommand{\Hilb}{\mathrm{Hilb}}
\newcommand{\CC}{\mathbb{C}}
\newcommand{\PP}{\mathcal{P}}
\newcommand{\Hom}{\mathrm{Hom}}
\newcommand{\Rep}{\mathrm{Rep}}
\newcommand{\MM}{\mathfrak{M}}
\newcommand{\gl}{\mathfrak{gl}}
\newcommand{\ppp}{\mathfrak{p}}
\newcommand{\VV}{\mathcal{V}}
\newcommand{\NN}{\mathbb{N}}
\newcommand{\OO}{\mathcal{O}}
\newcommand{\HH}{\mathcal{H\kern-.44em H}}
\newcommand{\git}{/\kern-.35em/}
\newcommand{\KK}{\mathbb{K}}
\newcommand{\Proj}{\mathrm{Proj}}
\newcommand{\quot}{\mathrm{quot}}
\newcommand{\core}{\mathrm{core}}
\newcommand{\Sym}{\mathrm{Sym}}
\newcommand{\FF}{\mathbb{F}}
\newcommand{\Span}[1]{\mathrm{span} \left\{#1 \right\}}
\newcommand{\UTor}{U_{\qqq,\ddd}(\ddot{\mathfrak{sl}}_r)}
\newcommand{\Sss}{\mathcal{S}}

\title[Five-Term Relations for wreath Macdonald polynomials]{Five-Term Relations for wreath Macdonald polynomials and tableau formulas for Pieri coefficients}
\author{Marino Romero} 
\email{marino.romero@univie.ac.at}
\address{Fakult\"{a}t f\"{u}r Mathematik, Universit\"{a}t Wien, Vienna, Austria}

\author{Joshua Jeishing Wen}
\email{joshua.jeishing.wen@univie.ac.at}
\address{Fakult\"{a}t f\"{u}r Mathematik, Universit\"{a}t Wien, Vienna, Austria}

\keywords{Macdonald polynomials, symmetric functions}
\subjclass[2020]{Primary: 05E05, 05E10, 33D52; Secondary: 81R10.}

\begin{abstract}
    We present a variety of new identities involving operators in the theory of wreath Macdonald polynomials. One such family of identities gives five-term relations, analogous to the one given by Garsia and Mellit for the modified Macdonald polynomials. As a consequence, we generate tableau formulas for wreath Macdonald Pieri coefficients, which give an incredibly quick way of computing their monomial expansions.
\end{abstract}

\maketitle

\section{Introduction}

Defined by Haiman \cite{Haiman}, the \textit{wreath Macdonald polynomials} generalize the modified Macdonald polynomials from the symmetric group $\Sigma_n$ to its wreath product $\Gamma_n:= \ZZ/r\ZZ\wr\Sigma_n$.
Just as one can study the representation theory of the symmetric groups $\{\Sigma_n\}$ using the ring of symmetric functions $\Lambda$, for fixed $r$, one can model the representation theory of $\{\Gamma_n\}$ via the ring of multisymmetric functions $\Lambda^{\otimes r}$ \cite{Mac}.
The wreath Macdonald polynomials are distinguished elements of the deformed ring $\CC(q,t)\otimes\Lambda^{\otimes r}$.
However, it may be naive to simply treat wreath Macdonald theory as a partially symmetric analogue of Macdonald theory.
Haiman's original definition was motivated by geometry.
Just as his celebrated proof of the Macdonald positivity conjecture \cite{HaimanHilb} showed that Macdonald polynomials are intimately tied to the geometry of Hilbert schemes of points on $\CC^2$, Haiman conjectured that the wreath Macdonald polynomials should be related to certain Nakajima quiver varieties for the cyclic quiver.
This conjecture was realized by Bezrukavnikov--Finkelberg \cite{BezFink}, who proved a wreath analogue of Macdonald positivity.

Functional aspects of wreath Macdonald polynomials did not receive much attention until recently, and it was not evident which properties and identities had wreath analogues. 
For instance, the usual Macdonald polynomials are solutions to $q$-difference equations; wreath analogues of this were finally established by the second author with Orr and Shimozono \cite{WreathEigen, OSW}.
More relevant to this paper, Tesler's identity and the five-term relation of Garsia and Mellit \cite{Garsia-Mellit} provide two of the most powerful tools in developing symmetric function identities related to modified Macdonald polynomials, even finding applications in geometry. From here, one can give a number of consequences, such as proving tableau formulas for Pieri coefficients \cite{BergeronHaiman} or giving identities for Theta operators \cite{DadderioRomero}. The recursion for Pieri coefficients implied by the five-term relation gives a fast, if not the fastest, way of computing the monomial expansion for modified Macdonald polynomials.
A version of Tesler's identity was recently proved for wreath Macdonald polynomials in \cite{RW}, where a number of plethystic operators were also developed. 

In this paper, we derive consequences of the wreath Tesler identity and push the functional theory of wreath Macdonald polynomials further into maturation.
First, we revisit the plethystic calculus on multisymmetric functions, giving a concise way of notating vector (or matrix) plethysms. We then demonstrate how to utilize this device by reproving reciprocity identities from the wreath Tesler identity.
Next, we give several families of five-term relations for operators on wreath Macdonald polynomials, and as a consequence, give formulas for computing Pieri coefficients in terms of tableaux. 
To clarify, our Pieri rules are not the wreath analogues of those found in \cite[Chapter VI]{Mac}; it appears that such a wreath analogue is quite complicated (cf. \cite{WreathOrth}).
Nonetheless, these recursive formulas are incredibly powerful, allowing one to compute many more examples of wreath Macdonald polynomials than was previously possible. 

We also introduce a wreath version for the Theta operators of D'Adderio, Iraci, and Vanden Wyngaerd \cite{Theta}, and show how they relate to the $\mathbb{D}$ operators in \cite{RW}. In the case of modified Macdonald polynomials, Theta operators provide a framework for producing many useful identities \cite{DadderioRomero}. They were originally introduced to give a compositional refinement of the Delta Conjecture of Haglund, Remmel, and Wilson \cite{DeltaConjecture}. This refinement was proved by D'Adderio and Mellit \cite{DadderioMellit}. Theta operators also conjecturally produce the Frobenius characteristic for the space of coinvariants in two sets of commuting and two sets of anticommuting variables \cite{Theta}.
We predict that wreath Theta operators will also give a powerful tool for dealing with multisymmetric function identities related to wreath Macdonald polynomials. Here, we will only look at an identity that explains the relation between wreath Theta operators and the action of the quantum toroidal algebra on multisymmetric functions.

\section{Combinatorial definitions}
Before we begin, let us first fix $r>2$. 
It will be useful to think of the symmetric group $\Sigma_r $ as acting on the elements of $\ZZ/r\ZZ$.  

\subsection{Partitions and characters}\label{PartNot}
A partition $\lambda=(\lambda_1,\lambda_2,\dots, \lambda_\ell)$ is a weakly decreasing sequence of positive integers; $\ell(\lambda) = \ell$ is its length; and $|\lambda| = \lambda_1+\cdots +\lambda_\ell$ its size, in which case we write $\lambda \vdash |\lambda|$.
The \textit{transposed} partition ${}^t\lambda$ has parts given by
\[
{}^t\lambda_i \coloneqq\#\left\{ k\, |\, m_k\ge i \right\}.
\]
For $\lambda,\mu\vdash n$, $\lambda\le\mu$ will denote dominance order, meaning that for all $i$,
\[
\lambda_1+\cdots+\lambda_i\le\mu_1+\cdots\mu_i.
\]

To $\lambda$, we associate its \textit{Young diagram}, which we draw following the French convention.
For example, $\lambda=(4,2,2)$ has the Young diagram
\vspace{.1in}

\centerline{\begin{tikzpicture}[scale=.5]
\draw (0,0)--(4,0)--(4,1)--(2,1)--(2,3)--(0,3)--(0,0);;
\draw (0,1)--(4,1);;
\draw (0,2)--(2,2);;
\draw (1,0)--(1,3);;
\draw (2,0)--(2,1);;
\draw (3,0)--(3,1);;
\end{tikzpicture}.}

\vspace{.1in}

\noindent We assign Cartesian coordinates to the cells in the Young diagram so that the box at the lower left corner has coordinates $(0,0)$.
Thus, $(i,j)\in\lambda$ is in the $i+1$-st column and $j+1$-st row. The character of $\square = (i,j)$ is given by
\[
\chi_\square \coloneqq q^i t^j.
\]

\begin{defn}
    For a given paritition $\lambda$, let
\begin{align*}
    B_\lambda \coloneqq \sum_{\square \in \lambda } 
    \chi_\square && \text{ and } && D_\lambda \coloneqq (1-q)(1-t) B_\lambda -1.
\end{align*}
\end{defn}
Given another partition $\mu$, $\mu/\lambda$ is the diagram formed by removing the cells of $\lambda$ from $\mu$. We extend any notation involving characters to skew diagrams, so that, for instance, $B_{\mu/\lambda}$ is the sum of characters for cells in $\mu/\lambda$. 

The \textit{content} of $\square=(i,j)$ is the quantity
\[
c_\square:= j-i.
\]
Graphically, it is the SW-NE diagonal that the box lies on.
We call the class of $c_\square$ modulo $r$ its \textit{color} and denote its value by $\bar{c}_\square$. The number of cells of color $i$ is denoted by $d_i(\lambda)$. \\

\begin{defn}
    If $\mu$ is attained from $\lambda$ by adding $k$ cells of each color, then we will write $\lambda \subset_k \mu$.
\end{defn}
We will see later that in light of Proposition \ref{CoreQuotDec}, if $\lambda \subset_k \mu$, then $\core(\lambda) = \core(\mu)$.
\\

An addable corner of $\lambda$ is a cell $\square \not \in \lambda$ for which $\lambda \cup \square$ is a partition. A removable corner is a cell $\square \in \lambda$ for which $A / \square$ is a partition.
Let $A_i(\lambda)$ denote the addable cells of $\lambda$ with color $i$, and similarly, let $R_i(\lambda)$ be the removable corners of color $i$.

\subsection{Characters and colors}
For a series $S = \sum_{a,b} c_{a,b} q^a t^b$ in $q$ and $t$, we let 
\[
S^{(i)} \coloneqq \sum_{i \equiv b-a \text{ mod } r} c_{a,b} q^a t^b
\]
be the part of color $i$. We will also set 
\begin{align*}
S_\ast = S\Big|_{(q,t) \rightarrow (q^{-1},t^{-1})}
&&
\text{ and }
&&
S_\ast^{(i)} \coloneqq \left(S^{(i)}\right)_\ast = S^{(i)} \Big|_{(q,t) \rightarrow (q^{-1},t^{-1})}.
\end{align*}

With this notation, we have
\begin{align*}
B_{\lambda}^{(i)} = \sum_{ \substack{ \square \in \lambda \\ \bar{c}_\square = i} }
\chi_\square && \text{ and } && 
D^{(i)}_\lambda = 
-t B_{\lambda}^{(i-1)} +(1+qt)B_{\lambda}^{(i)} - q B_{\lambda}^{(i+1)} - \delta_{i,0}.
\end{align*}
One can show that 
\[
D^{(i)}_\lambda = qt\sum_{\square \in R_i(\lambda)} \chi_\square
- \sum_{\square \in A_i(\lambda)} \chi_\square  . 
\]

\subsection{Multipartitions and the Young-Maya correspondence}
An $r$-tuple 
$
\vec{\lambda}=(\lambda^0,\ldots, \lambda^{r-1})
$
of partitions
has size given by 
\[
|\vec{\lambda}|:=\sum_{i=0}^{r-1}|\lambda^i|,
\]
in which case we will write $\vec{\lambda} \vdash |\vec{\lambda}|$. 
To avoid confusion, we will index the coordinates of the tuple by superscripts.
It will be useful to view the indexing set of the tuple as $\ZZ/r\ZZ$, so that $\lambda^i = \lambda^j$ if $i \equiv j\text{ mod }r$.

Concerning a single partition $\lambda$, a \textit{ribbon} of length $r$ is a set of $r$ contiguous boxes on the outer (NE) edge of $\lambda$ whose removal leaves behind another partition. (This is often referred to as a rim-hook in other contexts.)
An $r$-core partition is partition with no ribbons of length $r$.

\subsubsection{Maya diagrams}
A Maya diagram is a function $m:\ZZ\rightarrow\{\pm 1\}$ such that
\[
m(n)=
\begin{cases}
-1 & n\gg0\\
1 & n\ll 0
\end{cases}.
\]
We can visualize $m$ as a string of black and white beads indexed by $\ZZ$: the bead at position $n$ is black if $m(n)=1$ and white if $m(n)=-1$.
These beads will be arranged horizontally with the index increasing towards the \textit{left}.
Between indices $0$ and $-1$, we draw a notch and call it the \textit{central line}.
The \textit{charge} $c(m)$ is the difference in the number of discrepancies:
\[
c(m)=\#\left\{ n\ge 0 \, |\, m(n)=1 \right\}-\#\left\{n<0\, |\, m(n)= -1\right\}.
\]
The \textit{vacuum diagram} is the Maya diagram with only white beads left of the central line and only black beads right of the central line.

To a partition $\lambda$ we associate a Maya diagram $m(\lambda)$ via its \textit{edge sequence}---we call this the \textit{Young--Maya correspondence}.
Namely, tilt $\lambda$ by $45$ degrees counterclockwise and draw the content lines (which are now vertical).
Label by $n$ the gap between content lines $n$ and $n+1$.
Within the gap labeled by $n$, the outer (formerly NE) edge of $\lambda$ either has slope $1$ or $-1$; and $m(\lambda)(n)$ takes the value of the slope of that segment.
Outside of $\lambda$, we assign the default limit assignments (or use the axes as part of the outer edge of $\lambda$).
For example, Figure \ref{fig:maya-diagram} gives the Young--Maya correspondence for $\lambda=(4,2,2)$.

\begin{figure}
\centerline{
\begin{tikzpicture}[scale=.5]
\draw[thick] (-2.5,5)--(-0.5,7)--(1.5,5)--(3.5,7)--(4.5,6);;
\draw (-1.5,4)--(0.5,6);;
\draw (-.5,3)--(3.5,7);;
\draw (-2.5,5)--(.5,2)--(4.5,6);;
\draw (-1.5,6)--(1.5,3);;
\draw (1.5,5)--(2.5,4);;
\draw (2.5,6)--(3.5,5);;
\draw (3.5,7)--(4.5,6);;
\draw (-4,0) node {$\cdots$};;
\draw (-3,0) node {3};;
\draw (-3,1) circle (5pt);;
\draw (-2,0) node {2};;
\draw[fill=black] (-2,1) circle (5pt);;
\draw (-1,0) node {1};;
\draw[fill=black] (-1,1) circle (5pt);;
\draw (0,0) node {0};;
\draw (0,1) circle (5pt);;
\draw (.5,.5)--(.5,1.5);;
\draw (1,0) node {-1};;
\draw (1,1) circle (5pt);;
\draw (2,0) node {-2};;
\draw[fill=black] (2,1) circle (5pt);;
\draw (3,0) node {-3};;
\draw[fill=black] (3,1) circle (5pt);;
\draw (4,0) node {-4};;
\draw (4,1) circle (5pt);;
\draw (5,0) node {-5};;
\draw[fill=black] (5,1) circle (5pt);;
\draw (6,0) node {$\cdots$};;
\draw[dashed] (1.5,2)--(1.5,9);;
\draw[dashed] (2.5,2)--(2.5,9);;
\draw[dashed] (3.5,2)--(3.5,9);;
\draw[dashed] (4.5,2)--(4.5,9);;
\draw[dashed] (0.5,2)--(0.5,9);;
\draw[dashed] (-0.5,2)--(-0.5,9);;
\draw[dashed] (-1.5,2)--(-1.5,9);;
\draw[dashed] (-2.5,2)--(-2.5,9);;
\end{tikzpicture}
} 
\caption{The Maya diagram for the partition $(4,2,2)$.}
\label{fig:maya-diagram}
\end{figure}
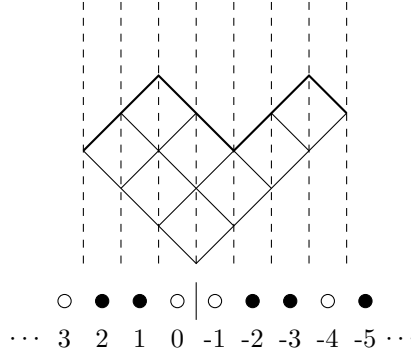

\vspace{.1in}

\begin{prop}\label{YoungMaya}
The Young--Maya correspondence is a bijection between partitions and charge $0$ Maya diagrams.
\end{prop}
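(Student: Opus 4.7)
The plan is to proceed by induction on $|\lambda|$, simultaneously verifying that $\lambda \mapsto m(\lambda)$ lands in charge-$0$ Maya diagrams and constructing its inverse. The base case $\lambda = \emptyset$ has outer boundary equal to the union of the positive axes; after tilting, the $y$-axis contributes slope $-1$ in every gap $n \geq 0$ and the $x$-axis contributes slope $+1$ in every gap $n < 0$, so $m(\emptyset)$ is the vacuum diagram, which has charge $0$.

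For the inductive step, I examine how $m(\lambda)$ changes upon adding an addable corner $\square = (i,j)$ of content $c = j-i$ to produce $\mu$. The boundary of $\lambda$ near $\square$ consists of a down-step from $(i,j+1)$ to $(i,j)$, crossing the content transition $c+1 \to c$ and hence lying in gap $c$, followed by a right-step from $(i,j)$ to $(i+1,j)$, crossing $c \to c-1$ and hence in gap $c-1$. These contribute $m_\lambda(c) = -1$ and $m_\lambda(c-1) = +1$. After adding $\square$, the same stretch of boundary becomes a right-step then a down-step, flipping these to $m_\mu(c) = +1$ and $m_\mu(c-1) = -1$, while all other gaps are unchanged. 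This local change swaps two adjacent beads and so manifestly preserves charge; by induction $m(\lambda)$ has charge $0$ for every $\lambda$. Injectivity is immediate, since the slopes recorded in $m(\lambda)$ recover the tilted outer boundary and hence $\lambda$.

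For surjectivity, given any charge-$0$ Maya diagram $m$, I reverse the construction. Since $m$ differs from the vacuum in only finitely many positions, I first claim that whenever $m$ is not the vacuum there exist adjacent positions $c, c-1$ with $m(c) = +1$ and $m(c-1) = -1$: any Maya diagram lacking such a ``descending'' transition (reading by decreasing index) must have the form ``all $-1$'s, then all $+1$'s,'' and the charge-$0$ condition forces the transition to occur exactly between $0$ and $-1$, i.e.\ to be the vacuum. Swapping such an adjacent pair is precisely the inverse of the cell-addition move, and the nonnegative integer
\[
S(m) \;=\; \sum_{n \geq 0} n\,[m(n) = +1] \;+\; \sum_{n < 0} (-n)\,[m(n) = -1]
\]
strictly decreases by $1$ under every swap, so iterating reaches the vacuum in finitely many steps; reading the swap sequence backwards as cell additions produces a partition $\lambda$ with $m(\lambda) = m$.

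The main technical point is the existence of a descending-transition pair in every non-vacuum diagram, which is handled by the block-structure argument above; once this is in hand, verifying that the two maps are mutual inverses follows because the cell-addition move and the bead swap are locally inverse operations, and the base cases agree by construction.
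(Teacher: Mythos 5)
Your proof is correct. The paper does not actually supply a proof of this proposition; it is cited as a standard fact in the combinatorics of partitions. Your argument is the classical one: $m(\emptyset)$ is the vacuum diagram, adding an addable corner of content $c$ swaps the beads in positions $c$ and $c-1$ (which preserves charge even in the boundary case $c=0$, since both counts in the definition of charge shift by one), and the monovariant $S(m)$ together with the observation that the only charge-$0$ diagram with no descending adjacent pair is the vacuum gives surjectivity. This is precisely the standard derivation, and all of your local computations (the slope/gap correspondence for down-steps and right-steps, the decrease of $S$ by exactly one under a swap, and the characterization of diagrams without a descending pair as single-transition diagrams) check out.
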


Given $\lambda$ and $0\le i\le r-1$, we take the following subdiagrams of $m(\lambda)$:
\[
m_i(\lambda)(n)\coloneqq m(\lambda)(i+nr)
\]
These have a possibly nonzero charge $c_i$.
If we draw a notch in $m_i(\lambda)$ immediately right of the bead for $c_i$ and treat it as the central line, we obtain a charge zero Maya diagram.
By Proposition \ref{YoungMaya}, this corresponds to some partition $\lambda^i$, and we set
\[
\quot(\lambda)\coloneqq(\lambda^0,\ldots, \lambda^{r-1}).
\]
In Figure \ref{fig:quotient}, we show that for $r=3$ and $\lambda=(4,2,2)$, $\quot(\lambda)=( \varnothing, \varnothing, (1,1))$. Here, we have drawn the central line coming from $m(\lambda)$ using solid notches and the $c_i$-shifted central lines using dashed notches.

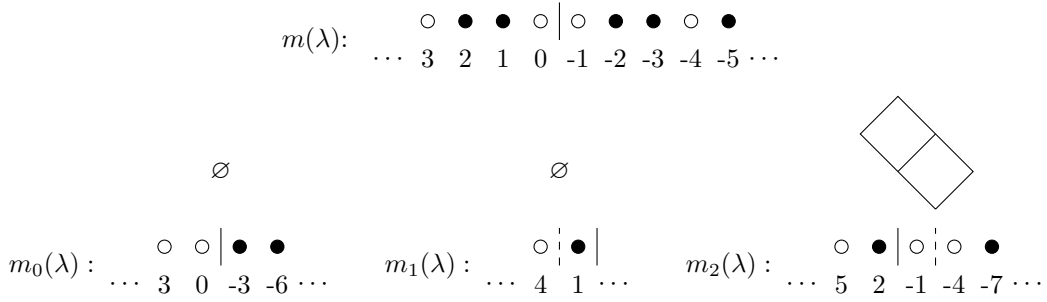
\begin{figure}[h]
\centerline{
\begin{tikzpicture}[scale=.5]
\draw (-8,.5) node {$m(\lambda)$:};;
\draw (-6,0) node {$\cdots$};;
\draw (-5,0) node {3};;
\draw (-5,1) circle (5pt);;
\draw (-4,0) node {2};;
\draw[fill=black] (-4,1) circle (5pt);;
\draw (-3,0) node {1};;
\draw[fill=black] (-3,1) circle (5pt);;
\draw (-2,0) node {0};;
\draw (-2,1) circle (5pt);;
\draw (-1.5,.5)--(-1.5,1.5);;
\draw (-1,0) node {-1};;
\draw (-1,1) circle (5pt);;
\draw (0,0) node {-2};;
\draw[fill=black] (0,1) circle (5pt);;
\draw (1,0) node {-3};;
\draw[fill=black] (1,1) circle (5pt);;
\draw (2,0) node {-4};;
\draw (2,1) circle (5pt);;
\draw (3,0) node {-5};;
\draw[fill=black] (3,1) circle (5pt);;
\draw (4,0) node {$\cdots$};;
\draw (-15,-5.5) node {$m_0(\lambda):$};;
\draw (-13,-6) node {$\cdots$};;
\draw (-12,-6) node {3};;
\draw (-12,-5) circle (5pt);;
\draw (-11,-6) node {0};;
\draw (-11,-5) circle (5pt);;
\draw (-10,-6) node {-3};;
\draw[fill=black] (-10,-5) circle (5pt);;
\draw (-10.5,-3) node {$\varnothing$};;
\draw (-9,-6) node {-6};;
\draw[fill=black] (-9,-5) circle (5pt);;
\draw (-8,-6) node {$\cdots$};;
\draw (-10.5,-5.5)--(-10.5,-4.5);;
\draw (-5,-5.5) node {$m_1(\lambda):$};;
\draw (-3,-6) node {$\cdots$};;
\draw (-2,-6) node {4};;
\draw[dashed] (-1.5,-5.5)--(-1.5,-4.5);;
\draw (-1.5, -3) node {$\varnothing$};;
\draw (-2,-5) circle (5pt);;
\draw (-1,-6) node {1};;
\draw[fill=black] (-1,-5) circle (5pt);;
\draw (-.5,-5.5)--(-.5,-4.5);;
\draw (0,-6) node {$\cdots$};;
\draw (3, -5.5) node {$m_2(\lambda):$};;
\draw (5,-6) node {$\cdots$};;
\draw (6,-6) node {5};;
\draw (6,-5) circle (5pt);;
\draw (7,-6) node {2};;
\draw[fill=black] (7,-5) circle (5pt);;
\draw (7.5,-5.5)--(7.5,-4.5);;
\draw (8.5, -4)--(9.5,-3)--(7.5,-1)--(6.5,-2)--(8.5,-4);;
\draw (7.5,-3)--(8.5,-2);;
\draw (8,-6) node {-1};;
\draw[dashed] (8.5,-5.5)--(8.5,-4.5);;
\draw (8,-5) circle (5pt);;
\draw (9,-6) node {-4};;
\draw (9,-5) circle (5pt);;
\draw (10,-6) node {-7};;
\draw[fill=black] (10,-5) circle (5pt);;
\draw (11,-6) node {$\cdots$};;
\end{tikzpicture}
}
\caption{The quotient decomposition for $\lambda = (4,2,2)$ when $r=3$.}
\label{fig:quotient}
\end{figure}

The $r$-core $\core(\lambda)$ is the result of removing ribbons of length $r$ from $\lambda$ until it is no longer possible.
We can obtain it from $m(\lambda)$ by changing all $m_i(\lambda)$ into the vacuum diagrams centered at the $c_i$-shifted central lines and then reconstituting the total Maya diagram.
In Figure \ref{fig:core}, we show that for $r=3$ and $\lambda=(4,2,2)$ we obtain $\core(\lambda)= (1,1)$.

\begin{figure}[h]
\begin{tikzpicture}[scale=.5]
\draw (-6,.5) node {$m(\lambda)$:};;
\draw (-4,0) node {$\cdots$};;
\draw (-3,0) node {3};;
\draw (-3,1) circle (5pt);;
\draw (-2,0) node {2};;
\draw[fill=black] (-2,1) circle (5pt);;
\draw (-1,0) node {1};;
\draw[fill=black] (-1,1) circle (5pt);;
\draw (0,0) node {0};;
\draw (0,1) circle (5pt);;
\draw (.5,.5)--(.5,1.5);;
\draw (1,0) node {-1};;
\draw (1,1) circle (5pt);;
\draw (2,0) node {-2};;
\draw[fill=black] (2,1) circle (5pt);;
\draw (3,0) node {-3};;
\draw[fill=black] (3,1) circle (5pt);;
\draw (4,0) node {-4};;
\draw (4,1) circle (5pt);;
\draw (5,0) node {-5};;
\draw[fill=black] (5,1) circle (5pt);;
\draw (6,0) node {$\cdots$};;
\draw (0.5,-2) node {$\downarrow$};;
\draw (-3,-9) node {$\cdots$};;
\draw (-2,-9) node {2};;
\draw (-2,-8) circle (5pt);;
\draw (-1,-9) node {1};;
\draw[fill=black] (-1,-8) circle (5pt);;
\draw (0,-9) node {0};;
\draw (0,-8) circle (5pt);;
\draw (.5,-8.5)--(.5,-7.5);;
\draw (1,-9) node {-1};;
\draw (1,-8) circle (5pt);;
\draw (2,-9) node {-2};;
\draw[fill=black] (2,-8) circle (5pt);;
\draw (3,-9) node {-3};;
\draw[fill=black] (3,-8) circle (5pt);;
\draw (4,-9) node {-4};;
\draw[fill=black] (4,-8) circle (5pt);;
\draw (5,-9) node {-5};;
\draw[fill=black] (5,-8) circle (5pt);;
\draw (6,-9) node {$\cdots$};;
\draw (.5,-7)--(1.5, -6)--(-.5,-4)--(-1.5, -5)--(0.5,-7);;
\draw (-.5,-6)--(.5,-5);;
\end{tikzpicture}
    \caption{Obtaining $\core(\lambda)$ for $\lambda=(4,2,2)$ when $r=3$.}
    \label{fig:core}
\end{figure}
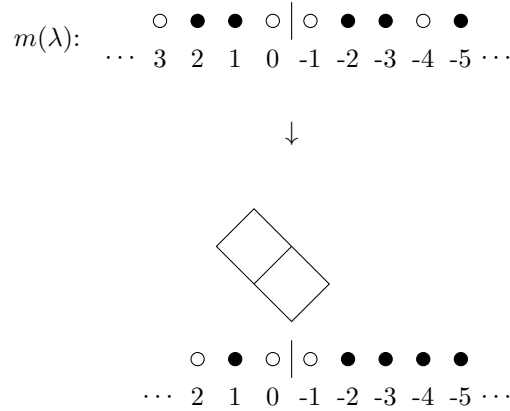

\vspace{.1in}

\noindent
We can obtain any $r$-core in this way, and so they are in bijection with tuples of charges $(c_0,\ldots, c_{r-1})$ such that \[
c_0+\cdots+c_{r-1}=0,
\]
due to $m(\lambda)$ being charge zero.
Thus, we naturally view $(c_0,\ldots, c_{r-1})$ as an element of the $A_{r-1}$ root lattice $Q$. 

\begin{prop}\label{CoreQuotDec}
We have the following:
\begin{enumerate}
\item The core-quotient decomposition gives a bijection
\[
\{\hbox{\rm partitions}\}\leftrightarrow\{\hbox{\rm$r$-core partitions}\}\times\left\{ \hbox{\rm$r$-multipartitions} \right\}
\]
\item The map $\lambda\mapsto (c_0,\ldots, c_{r-1})$ restricts to a bijection between $r$-core partitions and the $A_{r-1}$ root lattice $Q$.
\item The number of addable minus removable boxes of each color can be read from the charges:
\begin{equation}
\#A_i(\lambda)-\#R_i(\lambda)=\delta_{i,0}+c_{i-1}-c_i.
\label{ChargeAR}
\end{equation}
\item Adding an equal number boxes of each color to a partition does not change its core. In other words, if $\lambda \subset_k \mu$, then $\core(\lambda) = \core(\mu)$.
\end{enumerate}
\end{prop}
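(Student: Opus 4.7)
The plan is to derive all four parts from the Maya diagram construction. For (1) and (2), I would note that Proposition \ref{YoungMaya} already identifies partitions with charge-zero Maya diagrams; the decomposition $m(\lambda)\leftrightarrow(m_0,\dots,m_{r-1})$ is invertible by interleaving, and the charge constraint $c(m(\lambda))=0$ translates directly into $c_0+\cdots+c_{r-1}=0$, placing the charge vectors in the $A_{r-1}$ root lattice $Q$. Re-centering each $m_i$ past its $c_i$-th bead returns a charge-zero Maya diagram, hence a partition $\lambda^i$. For the core statement, removing a length-$r$ ribbon from $\lambda$ is precisely the elementary move of swapping an adjacent black-white bead pair within a single $m_i$, which preserves all charges; iterating until no more ribbons can be removed turns each $m_i$ into its vacuum at level $c_i$, so $\core(\lambda)$ depends only on, and is realized for, every such charge tuple with zero sum.

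For part (3), I would use the characterization that a removable corner of content $c$ corresponds to $m(\lambda)(c)=1$ and $m(\lambda)(c-1)=-1$, with the reverse pattern for addable corners. Since $\bar{c}_\square=i$ forces $c\equiv i\pmod r$, these patterns sit between $m_i(k)$ and $m_{i-1}(k)$ when $i\neq 0$, and between $m_0(k)$ and $m_{r-1}(k-1)$ when $i=0$. Each removable corner contributes $+2$ and each addable corner $-2$ to the relevant signed sum. Symmetric truncation gives $\sum_{k=-N}^{N-1} m_j(k)=2c_j$ for $N$ large, so the $i\neq 0$ case produces $2(\#R_i(\lambda)-\#A_i(\lambda))=2(c_i-c_{i-1})$. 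For $i=0$, shifting the truncation index on $m_{r-1}$ exposes two boundary beads contributing a net correction of $+2$ in the subtracted sum, which becomes the extra $-2\delta_{i,0}$ that rearranges into the stated formula.

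Finally, for part (4), adding a single box of color $i$ flips the white bead at content $c\equiv i$ to black and the black bead at content $c-1\equiv i-1$ to white, incrementing $c_i$ by $1$ and decrementing $c_{i-1}$ by $1$ while leaving every other charge fixed. Adding $k$ boxes of each color therefore shifts every $c_j$ by $+k$ (from the color-$j$ additions) and by $-k$ (from the color-$(j{+}1)$ additions), for zero net change. Since (2) says the core is determined by the charge vector, $\core(\lambda)=\core(\mu)$; the existence of an ordering of the additions along which every intermediate is a partition is the standard fact that every skew shape admits a standard filling. The main obstacle I expect is the boundary bookkeeping in (3) for $i=0$, which is precisely what forces the $\delta_{i,0}$ term in \eqref{ChargeAR}.
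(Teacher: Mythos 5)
Your proof is correct and follows the standard Maya-diagram route, which is precisely the machinery the paper sets up via the Young--Maya correspondence; the paper itself cites this proposition without proof, as it is classical (Littlewood's core-quotient decomposition together with the bead-counting identity for charges). The key facts you use — that a removable (resp.\ addable) corner at content $c$ corresponds to the bead pattern $m(c)=+1,\,m(c-1)=-1$ (resp.\ the reverse), that $\sum_{k=-N}^{N-1}m_j(k)=2c_j$ for $N$ large, and that the index shift $k\mapsto k-1$ on $m_{r-1}$ in the $i=0$ case contributes the extra $+2$ that produces the $\delta_{i,0}$ term — all check out, and your part (4) argument correctly tracks that adding one box of color $i$ sends $c_i\mapsto c_i+1$, $c_{i-1}\mapsto c_{i-1}-1$, so adding $k$ of each color is charge-neutral.
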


We can obtain new partitions by applying permutations to the quotient Maya diagrams.
\begin{defn}
Given $\pi \in \Sigma_r$ (permuting $\{0,1,\dots,r-1\}$) and a partition $\lambda$ with quotient Maya diagrams $\left( m_0(\lambda),m_1(\lambda),\ldots, m_{r-1}(\lambda) \right)$, we denote by $\pi\lambda$ the partition whose Maya diagrams are given by
\[
\left( m_{0}(\pi \lambda), m_{1} (\pi \lambda),\ldots, m_{r-1}( \pi \lambda) \right):=\left( m_{\pi(0)}(\lambda), m_{\pi(1)}(\lambda),\ldots, m_{\pi(r-1)}(\lambda) \right) = \pi m(\lambda).
\]
Letting $w_0$ be the longest element, so that $w_0(i) = (-i-1),$ we have
\[
m(w_0\lambda)\coloneqq \left( m_{r-1}(\lambda), \dots, m_1(\lambda), m_0(\lambda)    \right).
\]
Letting $\sigma = ( r-1 ~\cdots ~ 1~ 0)$ be the long cycle, $\sigma\lambda$ is the partition whose quotient Maya diagrams have been cyclically shifted:
\[
\left( m_0(\sigma\lambda),m_1(\sigma\lambda),\ldots, m_{r-1}(\sigma\lambda) \right)\coloneqq \left( m_{r-1}(\lambda), m_0(\lambda),\ldots, m_{r-2}(\sigma\lambda) \right).
\]
\end{defn}
The above actions send $r$-cores to $r$-cores, and the charges $(c_0,\ldots, c_{r-1})$ undergo the same permutation with respect to their coordinates.
We leave it to the reader to see that in our running example of $r=3$ and $\lambda=(4,2,2)$, we have 
\begin{align*}
w_0\lambda=(6,4), && 
\sigma\lambda= (6,4,1) && \text{ and } && 
\sigma^2\lambda= (5,3).
\end{align*}
In \cite{RW}, we show that
\begin{prop}\label{RevProp}
For any $\pi \in \Sigma_r$, the partitions $\lambda$ and $\pi\lambda$ have the same set of color $0$ boxes. Furthermore, if $\{\pi(0),\pi(1),\dots, \pi(k-1)\} = \{0,1,\dots,k-1\}$, then $\lambda$ and $\pi \lambda$ have the same color $k$ boxes.

In particular,
\begin{enumerate}
\item The partitions $\lambda$ and $w_0\lambda$ have the same set of color $0$ boxes.
\item For any $k\in\ZZ/r\ZZ$, $\lambda$ and $\sigma^k\lambda$ have the same set of color $0$ boxes.
\item For any $k\in\ZZ/r\ZZ$, $\lambda$ and $w_0\sigma^{-k}\lambda=\sigma^kw_0\lambda$ have the same set of color $k$ boxes.
\end{enumerate}
\end{prop}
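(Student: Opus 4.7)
The plan is to reduce the claim to a statement about one-dimensional partial sums of the Maya diagram $m(\lambda)$, and then to exploit that the action $\lambda \mapsto \pi\lambda$ permutes entries of $m(\lambda)$ within each block $[jr,(j+1)r)$ of consecutive integers.

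\smallskip

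The first step will be to observe that the set of cells of $\lambda$ with a given content $c \in \ZZ$ is a contiguous staircase of lattice points along the $c$-diagonal, anchored at the corner $(\max(0,-c),\max(0,c))$ of the first quadrant, so its set of positions is determined by its cardinality $k_c(\lambda)$. Via the Young--Maya correspondence this cardinality equals half the vertical offset at content $c$ between the tilted NE-boundary of $\lambda$ and that of the vacuum, which unfolds to
\[
k_c(\lambda) \;=\; -\tfrac{1}{2}\sum_{n<c}\bigl(m(\lambda)(n)-m_{\mathrm{vac}}(n)\bigr).
\]
Thus, to see that $\lambda$ and $\pi\lambda$ have the same cells of content $c$, it suffices to see that the partial sum $\sum_{n<c} m(\lambda)(n)$ is invariant under $\lambda \mapsto \pi\lambda$.

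\smallskip

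Next, the defining relation $m_i(\pi\lambda)=m_{\pi(i)}(\lambda)$ unpacks as $m(\pi\lambda) = m(\lambda)\circ \rho_\pi$, where $\rho_\pi$ is the bijection of $\ZZ$ sending $jr+u$ to $jr+\pi(u)$ for $u\in\{0,\dots,r-1\}$. Since $\rho_\pi$ permutes each block $[jr,(j+1)r)$ internally, the partial sum $\sum_{n \in S} m(\lambda)(n)$ is preserved for any subset $S\subset\ZZ$ with $\rho_\pi(S)=S$. When $c=jr$ (color~$0$), the half-line $\{n<c\}$ is a disjoint union of complete blocks and is automatically $\rho_\pi$-stable for every $\pi$, yielding the color-$0$ assertion with no hypothesis on $\pi$. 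When $c=k+jr$ with $k\ge 1$, the half-line decomposes as complete blocks together with the partial block $\{jr,\dots,jr+k-1\}$ whose elements have residues $\{0,\dots,k-1\}$ mod $r$; the hypothesis $\pi(\{0,\dots,k-1\})=\{0,\dots,k-1\}$ is exactly what is needed for $\rho_\pi$ to stabilize this partial block, completing the color-$k$ case.

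\smallskip

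The three particular consequences then follow: (1) and (2) are instances of $k=0$, where no constraint on $\pi$ is required, while (3) rests on the identity $w_0\sigma^{-k}(\{0,\dots,r-k-1\}) = \{0,\dots,r-k-1\}$, a direct check from $w_0(i)=-i-1$ and $\sigma^{-k}(i)=i+k$, together with the cyclic identification of colors in $\ZZ/r\ZZ$. The step I expect to require the most care is the first one — making the Maya-diagram formula for $k_c(\lambda)$ precise, including verifying that the content-$c$ cells of any partition form a contiguous staircase so that their \emph{set} (not just cardinality) is recovered from $k_c(\lambda)$. Once this dictionary is established, the block-permutation argument is essentially bookkeeping.
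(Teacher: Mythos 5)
Your reduction of the claim to invariance of the partial sums $\sum_{n < c}\bigl(m(\lambda)(n) - m_{\mathrm{vac}}(n)\bigr)$ is sound: the content-$c$ diagonal of a partition is indeed a contiguous staircase anchored at $(\max(0,-c),\max(0,c))$, so $k_c(\lambda)$ pins down the cells themselves, and the identification $m(\pi\lambda) = m(\lambda)\circ\rho_\pi$ with $\rho_\pi(jr+u) = jr+\pi(u)$ follows directly from $m_i(\pi\lambda)(n) = m_{\pi(i)}(\lambda)(n) = m(\lambda)(\pi(i)+nr)$. The block decomposition of $\{n < jr+k\}$ then gives exactly the stated criterion. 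Since the paper cites the proposition to \cite{RW} without proof, there is no in-paper argument to compare against, but the main statement and items (1), (2) are handled correctly.

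The verification of item (3), however, has a genuine error. You check that $w_0\sigma^{-k}$ stabilizes $\{0,\dots,r-k-1\}$, which by your own criterion yields that $\lambda$ and $\pi\lambda$ share their \emph{color-$(r-k)$} boxes, not their color-$k$ boxes; no ``cyclic identification'' closes this gap, since $r-k\equiv -k$ is a different residue from $k$ unless $r\mid 2k$. Concretely, for $r=3$ and $\lambda=(4,2,2)$, applying the permutation $w_0\sigma^{-1}=(0\,1)$ raw via $m_i(\pi\lambda)=m_{\pi(i)}(\lambda)$ produces $(4,2,1)$, whose color-$1$ cells are $\{(0,1),(2,0)\}$ rather than $\lambda$'s $\{(0,1),(1,2),(2,0)\}$. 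The point you are missing is that $m_i(\pi\lambda):=m_{\pi(i)}(\lambda)$ defines a \emph{right} action: $\pi(\rho\lambda)=(\rho\pi)\lambda$. Thus $w_0\sigma^{-k}\lambda$, read as the sequential composite $w_0(\sigma^{-k}\lambda)=\sigma^k(w_0\lambda)$, corresponds in your $\rho_\pi$-formalism to the permutation $\sigma^{-k}\circ w_0$, not $w_0\circ\sigma^{-k}$. One computes $(\sigma^{-k}\circ w_0)(i)=\sigma^{-k}(-i-1)=k-1-i$, which for $i\in\{0,\dots,k-1\}$ runs over exactly $\{0,\dots,k-1\}$; this is the identity that should be invoked, and with it (3) does follow from the general statement.
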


\section{Symmetric functions}
\subsection{Classical symmetric functions}
A standard reference for some of this material is \cite{Mac}.
The ring of symmetric functions, $\Lambda$, is generated algebraically by any of the following families:
\begin{itemize}
    \item the power sums: $p_n = x_1^n + x_2 ^n + \cdots , $
    \item the complete homogeneous functions: $h_n = \sum_{i_1 \leq\cdots \leq i_n } x_{i_1} \cdots x_{i_n},$
    \item the elementary functions: $e_n = \sum_{i_1< \cdots < i_n} x_{i_1} \cdots x_{i_n}$. 
\end{itemize}
This means that for partitions $\lambda$, the elements $p_\lambda = p_{\lambda_1} \cdots p_{\lambda_{\ell(\lambda)}}$, $h_\lambda = h_{\lambda_1} \cdots h_{\lambda_{\ell(\lambda)}}$, and $e_\lambda = e_{\lambda_1} \cdots e_{\lambda_{\ell(\lambda)}}$, each give a basis for $\Lambda$. 
There is also the Schur function basis $\{s_\lambda\}$ and monomial basis $\{m_\lambda\}$. The symmetric function $m_\lambda$ is the sum of monomials whose exponents are given by $\lambda_1,\dots, \lambda_{\ell(\lambda)}$.

The Hall scalar product is defined by setting 
\[
\langle p_\lambda, p_\mu \rangle = z_\lambda \delta_{\lambda,\mu},
\]
where $z_\lambda = \prod_i i^{m_i} m_i!$, $m_i$ being the multiplicity of $i$ in $\lambda$. One also has that $\{s_\lambda\}$ forms an orthonormal basis; and the monomial basis is dual to the complete homogeneous basis. 

Given any expression $E(t_1,t_2,\dots)$ in the indeterminates $t_1,t_2,\dots$, one defines the plethystic substitution of a symmetric function $f$ at $E$, denoted $f[E]$, by first defining
\[
p_n[E(t_1,t_2,\dots)] = E(t_1^n,t_2^n\dots),
\]
then extending algebraically so that $p_\lambda[E] = p_{\lambda_1} [E] \cdots p_{\lambda_{\ell(\lambda)}}[E].$

The plethystic exponential
\[
\Omega[X] \coloneqq\exp \left( \sum_{k >0} p_k[X]/k \right) = \sum_{n \geq 0} h_n[X]
\]
produces the Cauchy kernel:
\[
\Omega[XY] = \sum_{\lambda} s_\lambda[X] s_\lambda[Y].
\]
It is the reproducing kernel under the Hall scalar product: for any $f\in \Lambda$,
\[
\left \langle \Omega[XY], f[X] \right \rangle = f[Y].
\]
Another useful property of the plethystic exponential is that for any expressions $X$ and $Y$,
\[
\Omega[X+Y] = \Omega[X]\Omega[Y].
\]

\subsection{Multisymmetric functions and new notations for plethystic expressions}
In this section, we will introduce new notation for vector (or matrix) plethysms that will help us write many of the operators that follow. 
First, recall that the ring of multisymmetric functions, $\Lambda^{\otimes r}$, is generated by
elements of the form $G = g_0[X^{(0)}]\cdots g_{r-1}[X^{(r-1)}],$ where each $g_i \in \Lambda$ is a symmetric function evaluated at a family of variables $X^{(i)} = x_{i1}+x_{i2}+\cdots$.
We now define vector plethysms in the following way:
\\

Let $\epsilon_0,\dots, \epsilon_{r-1}$ 
be a set of standard basis elements indexed by elements of $\ZZ/r\ZZ$ (so that $\epsilon_i = \epsilon_j$ if $i\equiv j \mod r$). 
The basis vectors will also behave like projections, so that \[\epsilon_i \epsilon_j = \epsilon_i \delta_{i,j}.\]
Given any expressions $Y^{(0)}, Y^{(1)},\dots, Y^{(r-1)}$, we define the vector $ Y^{\bullet} = \epsilon_0 Y^{(0)} + \cdots + \epsilon_{r-1}Y^{(r-1)}$. 
\begin{defn}
    Vector plethysms are defined by the following rules:
    \begin{itemize}
        \item Given $G = g_0[X^{(0)}]\cdots g_{r-1}[X^{(r-1)}] \in \Lambda^{\otimes r}$, we define
\[
G[Y^\bullet] \coloneqq g_0[\epsilon_0 Y^\bullet] \cdots g_{r-1}[ \epsilon_{r-1} Y^\bullet].
\]
        \item For an ordinary symmetric function $g \in \Lambda$ and non-vector expression $E$, 
        \[
g[ \epsilon_i E] = g[E],
        \]
        so that the $\epsilon_i$ can be treated as the constant $1$. In particular, combined with the first rule,
        \[
G[Y^\bullet]= g_0[  Y^{(0)}] \cdots g_{r-1}[ Y^{(r-1)}].
        \]
    \end{itemize}
\end{defn}

For any family of symmetric functions $\{F_\lambda\}$ indexed by partitions, we set
\[
F_{\vec{\lambda}}[X^\bullet] \coloneqq F_{\lambda^0}[X^{(0)}] \cdots F_{\lambda^{r-1}}[X^{(r-1)}].
\]
We then have the following families of bases for $\Lambda^{\otimes r}$:
\begin{align*}
    \{p_{\vec{\lambda}}[X^{\bullet}]\}, && 
    \{h_{\vec{\lambda}}[X^{\bullet}]\}, &&
    \{e_{\vec{\lambda}}[X^{\bullet}]\}, && \{m_{\vec{\lambda}}[X^{\bullet}]\},&&  \text{ and } &&
    \{s_{\vec{\lambda}}[X^{\bullet}]\}.
\end{align*}
The Hall scalar product extends to the multisymmetric setting. In particular, we have
\[
\langle s_{\vec{\lambda}}, s_{\vec{\mu}} \rangle = \delta_{\vec{\lambda},\vec{\mu}}.
\]

Lastly, it will be useful to denote the operator of multiplication by $F[X^\bullet] \in\Lambda^{\otimes r}$ by $\underline{F}[X^\bullet]$. The adjoint of this operator under the Hall scalar product will be denoted by $F^\perp[X^\bullet]$.

\subsubsection{Linear maps and plethystic operators}
We now consider linear transformations acting on the span of the $\epsilon_i$ coordinates. In particular, one defines
\begin{align*} \sigma \epsilon_i = \epsilon_{i-1} && \text{ and } && \iota \epsilon_i = \epsilon_{-i}. 
\end{align*}
Then for $G = g_0[X^{(0)}] \cdots g_{r-1}[X^{(r-1)}] \in \Lambda^{\otimes r}$,
\begin{align*}
G[ \sigma  X^{\bullet}] &= G[ \epsilon_{r-1}X^{(0)} + \epsilon_{0} X^{(1)} + \cdots + \epsilon_{r-2} X^{(r-1)}]
= g_0[X^{(1)}] \cdots g_{r-2}[X^{(r-1)}] g_{r-1}[X^{(0)}], \text{ and }
\\
G[\iota X^{\bullet}] &= 
G[ \epsilon_{0}X^{(0)} + \epsilon_{-1} X^{(1)} + \cdots + \epsilon_{-r+1} X^{(r-1)}]
= g_0[X^{(0)}]g_{1}[X^{(r-1)}] \cdots  g_{r-1}[X^{(1)}].
\end{align*}
One also has, for instance,
\[
p_n\left[ \epsilon_i (1-q \sigma) X^\bullet \right]
= p_n\left[ \epsilon_i X^\bullet - \epsilon_i q \sigma X^\bullet \right]
= p_n\left[ X^{(i)} -  q   X^{(i+1)} \right].
\]
\begin{defn}
For any $k$, let
\begin{align*}
M_k \coloneqq (1-q \sigma^k) (1-t \sigma^{-k}) && \text{ and }&&
M \coloneqq M_1 = (1-q\sigma) (1-t\sigma^{-1}),
\end{align*}
so that $M_0 = (1-q)(1-t)$ and $M_{-1} = (1-q \sigma^{-1})(1-t \sigma) = M^T$, where $M^T$ denotes the transpose. We then also have that $\iota M_k = M_{-k} \iota$.
\end{defn}

\begin{defn}
    For any transformation $A$, we define the translation operators $\TT^{(i)}_{A}$ and multiplication operators $\PP^{(i)}_{A}$ as follows:
    \begin{align*}
        \TT^{(i)}_{A} G[ X^{\bullet} ] &= G[  X^{\bullet} +  A\epsilon_i ],\\
        \PP^{(i)}_{A} G[ X^{\bullet} ] 
        & = \Omega\left[ \epsilon_i A   X^{\bullet} \right]G[ X^{\bullet} ] .
    \end{align*}
\end{defn}
For instance, $\PP^{(i)}_1$ corresponds to multiplication by $\Omega[ X^{(i)}]$, and more crucially, $\PP^{(i)}_{M^T}$ corresponds to multiplication by
\begin{align*}
\Omega\left[ \epsilon_i  \frac{ X^{\bullet}}{(1-q \sigma^{-1}) (1-t \sigma)}\right] 
&  = 
\Omega\left[ \epsilon_i  
\sum_{a,b \geq 0} q^a t^b \sum_{ j \in \ZZ/r\ZZ} \epsilon_{j+a-b} X^{(j)}\right]   \\
& = \Omega\left[ 
\sum_{j \in \ZZ/r\ZZ} X^{(j)} \sum_{ \substack{a,b\geq 0 \\ j  \equiv i-a+b \mod r}} q^a t^b 
\right] 
\\
& = \Omega\left[ 
 \sum_{ a,b \geq 0} q^a t^b X^{(i-a+b)}
\right] .
\end{align*}

\begin{prop}
    For transformations $A$ and $B$, we have the following commutation relation:
    \[
\TT^{(i)}_A \PP^{(j)}_B = \Omega \left[ BA_{ji} \right] \PP^{(j)}_B \TT^{(i)}_A,
    \]
where  $BA_{ji} = \epsilon_j BA \epsilon_i\Big|_{\epsilon_j=1}$ is the matrix coefficient of $BA$ corresponding to the $\epsilon_j$ component in $BA \epsilon_i$. 
\end{prop}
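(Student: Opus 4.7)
The plan is to unwind both sides of the claimed identity by applying them to a general test element $G[X^\bullet] \in \Lambda^{\otimes r}$ and comparing the two expressions using only the multiplicativity of $\Omega$ and the projection rule $\epsilon_j \epsilon_k = \delta_{j,k}\epsilon_j$. In this sense, the proposition is essentially a bookkeeping statement disguised as a commutation relation.

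First I would compute the left-hand side:
\[
\TT^{(i)}_A \PP^{(j)}_B G[X^\bullet] \;=\; \TT^{(i)}_A \bigl( \Omega[\epsilon_j B X^\bullet]\, G[X^\bullet] \bigr) \;=\; \Omega\bigl[\epsilon_j B(X^\bullet + A\epsilon_i)\bigr]\, G[X^\bullet + A\epsilon_i].
\]
Using the plethystic identity $\Omega[X+Y]=\Omega[X]\Omega[Y]$, this factors as
\[
\Omega[\epsilon_j B X^\bullet]\,\Omega[\epsilon_j B A\epsilon_i]\, G[X^\bullet + A\epsilon_i].
\]
Then I would compute the right-hand side:
\[
\PP^{(j)}_B \TT^{(i)}_A G[X^\bullet] \;=\; \Omega[\epsilon_j B X^\bullet]\, G[X^\bullet + A\epsilon_i].
\]
Comparing, the two sides differ exactly by the scalar factor $\Omega[\epsilon_j B A \epsilon_i]$.

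It remains to identify this factor with $\Omega[BA_{ji}]$. Here I would invoke the projection rule: for any linear transformation $C$ on the span of the $\epsilon_k$, one has $C\epsilon_i = \sum_k C_{ki}\epsilon_k$, so $\epsilon_j C \epsilon_i = C_{ji}\epsilon_j$. Applied to $C = BA$, this gives $\epsilon_j B A \epsilon_i = (BA)_{ji}\,\epsilon_j$. By the second rule defining vector plethysms (that $\epsilon_j$ may be treated as the constant $1$ when multiplying a non-vector expression), we conclude
\[
\Omega[\epsilon_j B A \epsilon_i] \;=\; \Omega\bigl[(BA)_{ji}\,\epsilon_j\bigr] \;=\; \Omega[BA_{ji}],
\]
which matches the notation $BA_{ji}=\epsilon_j BA\epsilon_i\big|_{\epsilon_j = 1}$ introduced in the statement, yielding the claimed identity.

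The main obstacle, such as it is, is notational rather than mathematical: one must check that the vector-plethysm conventions (specifically, how $\epsilon_j$ behaves as both a projection on $X^\bullet$ and as a trivial constant inside a scalar plethystic bracket) are being applied consistently when extracting the matrix coefficient. Once that is handled, the identity is a one-line consequence of the multiplicativity of $\Omega$.
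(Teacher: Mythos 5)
Your proof is correct and follows essentially the same computation as the paper: translate $X^\bullet$ by $A\epsilon_i$ inside $\Omega[\epsilon_j B X^\bullet]$, factor by multiplicativity of $\Omega$, and identify the residual scalar $\Omega[\epsilon_j BA\epsilon_i]$ with $\Omega[BA_{ji}]$. The paper phrases it as conjugating the multiplication operator directly rather than applying both sides to a test element, but this is the same argument.
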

\begin{proof}
We note that 
\begin{align*}
\TT^{(i)}_A \Omega\left[ \epsilon_j B X^{\bullet} \right] 
& = \Omega\left[ \epsilon_j B (X^{\bullet} + A\epsilon_i) \right] \TT^{(i)}_A \\
& = \Omega\left[ \epsilon_j B X^{\bullet} \right] \Omega \left[ \epsilon_j B A\epsilon_i \right] \TT^{(i)}_A\\
& = \Omega \left[ BA_{ji} \right]\Omega\left[ \epsilon_j B X^{\bullet} \right] \TT^{(i)}_A.\qedhere
\end{align*}
\end{proof}

\begin{prop}
    For $f \in \Lambda$ and any linear transformation $A$, we have
    \[
f\left[ X^\bullet A Y^\bullet \right] 
= f\left[ Y^\bullet A^T X^\bullet \right]
    \]
\end{prop}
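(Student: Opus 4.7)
The plan is to expand both sides explicitly in the idempotent basis $\{\epsilon_i\}$, observe that the two vector alphabets induce the same scalar plethystic shadow after the rule $g[\epsilon_i E] = g[E]$ is applied, and then extend from power sums.

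First I would compute the matrix-coefficient expansions. Writing $A\epsilon_j = \sum_i A_{ij}\epsilon_i$ and using the idempotence $\epsilon_i\epsilon_k = \delta_{i,k}\epsilon_i$, we get $\epsilon_i A \epsilon_j = A_{ij}\epsilon_i$. Substituting into $X^\bullet = \sum_i \epsilon_i X^{(i)}$ and $Y^\bullet = \sum_j \epsilon_j Y^{(j)}$ yields
\[
X^\bullet A Y^\bullet = \sum_{i,j} A_{ij}\,\epsilon_i X^{(i)} Y^{(j)},\qquad Y^\bullet A^T X^\bullet = \sum_{i,j} A_{ij}\,\epsilon_j X^{(i)} Y^{(j)},
\]
where for the second equation I use $(A^T)_{ji} = A_{ij}$ and relabel the summation indices. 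Thus the two expressions differ only in which idempotent $\epsilon_i$ or $\epsilon_j$ is attached to the scalar $A_{ij} X^{(i)} Y^{(j)}$.

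Second, I would reduce to generators. Since $\{p_n\}$ generates $\Lambda$ and plethysm is a ring homomorphism in the symmetric function argument, it suffices to check $f = p_n$. For $p_n$, additivity $p_n[E_1 + E_2] = p_n[E_1] + p_n[E_2]$ together with the rule $p_n[\epsilon_i E] = p_n[E]$ applied termwise reduces both $p_n[X^\bullet A Y^\bullet]$ and $p_n[Y^\bullet A^T X^\bullet]$ to the common scalar
\[
\sum_{i,j} p_n\!\left[ A_{ij} X^{(i)} Y^{(j)} \right],
\]
which establishes the identity for all power sums, hence for all $f \in \Lambda$.

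The main subtlety is purely notational: the rule $g[\epsilon_i E] = g[E]$ is stated for a single $\epsilon$-factor times a scalar alphabet, so passing from the termwise statement on power sums to the level of general $f \in \Lambda$ requires expanding $f$ in the power-sum basis \emph{before} applying the rule. Once this bookkeeping is handled, the proposition collapses to the trivial identity $\sum_{i,j} A_{ij} X^{(i)} Y^{(j)} = \sum_{i,j} A_{ij} Y^{(j)} X^{(i)}$.
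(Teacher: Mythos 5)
Your proof is correct and follows essentially the same approach as the paper's: expand $X^\bullet A Y^\bullet$ and $Y^\bullet A^T X^\bullet$ in the idempotent basis, observe they differ only in the attached $\epsilon$-factor, and conclude via the rule that $\epsilon_i$ is treated as $1$ when plugged into $f\in\Lambda$. The paper states this tersely while you make the reduction to power sums explicit, but the underlying argument is identical.
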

\begin{proof}
For standard basis elements $\epsilon_i$ and $\epsilon_j$, we have
\begin{align*}
\epsilon_i X^{(i)} A \epsilon_j Y^{(j)} = X^{(i)} Y^{(j)}  A_{ij}\epsilon_i && \text{ and }&& \epsilon_j Y^{(j)} A^T \epsilon_i X^{(i)} = Y^{(j)} X^{(i)} A_{ij} \epsilon_j.
\end{align*}
Since $f \in \Lambda$, we will in the end treat the $\epsilon_i$ as $1$, meaning the equality in the proposition holds.
\end{proof}

\section{Wreath Macdonald polynomials and the Cauchy kernel}

\begin{defn}\label{ModDef}
For any partition $\lambda$, the \textit{modified wreath Macdonald polynomial} $H_\lambda=H_\lambda[X^\bullet;q,t]$ is characterized by the following conditions:
\begin{enumerate}
\item $H_\lambda[(1-q\sigma^{-1})X^\bullet]$ lies in the span of $\left\{ s_{\quot(\mu)}\, :\,~ \mu\ge_r\lambda  \right\}$;
\item $H_\lambda[(1-t^{-1}\sigma^{-1})X^\bullet]$ lies in the span of $\left\{ s_{\quot(\mu)}\, :\,~\mu\le_r\lambda  \right\}$;
\item when expanded in terms of $\{s_{\vec{\mu}}\}$, the coefficient of $s_n[X^{(0)}]$ is $1$, i.e. $H_\lambda[\epsilon_0 ]=1$.
\end{enumerate}
Here, $\lambda \leq_r \mu$ denotes dominance order, with the extra condition that $\core(\lambda) = \core(\mu)$. 
\end{defn}

\begin{defn}
The wreath Macdonald pairing is given by setting
\[
\langle F, G \rangle_\ast = \langle F[-\iota X^\bullet], G[M^T X^\bullet] \rangle.
\]
\end{defn}
The wreath Macdonald basis is orthogonal to the ``dagger'' basis,
\[
H_\lambda^\dagger\left[ X^\bullet\right] = H_\lambda [ - \iota X^\bullet; q^{-1}, t^{-1}],
\]
where one has
\[
\langle H_\lambda^\dagger , H_\mu \rangle_\ast = N_\lambda \delta_{\lambda,\mu}
\]
The value $N_\lambda$ is given below:

\begin{prop}[\protect{\cite[Theorem 3.32]{OSWreath}}]
   For $\square=(a,b)\in\lambda$, we define the following:
   \begin{itemize}
       \item \textit{Arm length}: $a_\lambda(\square)\coloneq\lambda_{b+1}-a$,
       \item \textit{Leg length}: $l_\lambda(\square)\coloneq {}^t\lambda_{a+1}-b$,
       \item \textit{Hook length}: $h_\lambda(\square)=a_\lambda(\square)+
       l_\lambda(\square)+1$.
   \end{itemize}
   We then have
   \[
   N_\lambda=\prod_{\substack{\square\in\lambda\\ h(\square)\equiv 0\hbox{ \footnotesize{mod} }r}}
   \left(1-q^{a_\lambda(\square)+1}t^{-l_\lambda(\square)}\right)
   \left(1-q^{-a_\lambda(\square)}t^{l_\lambda(\square)+1}\right).
   \]
\end{prop}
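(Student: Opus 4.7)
Since this proposition is cited as [OSWreath, Theorem 3.32], the actual proof is carried out there via equivariant K-theory of Nakajima quiver varieties of type $\widehat{A}_{r-1}$. My plan for reconstructing the formula using only the plethystic machinery developed in the preceding sections would be to compute $N_\lambda=\langle H_\lambda^\dagger,H_\lambda\rangle_*$ by exploiting the defining triangularity of $H_\lambda$ in Definition \ref{ModDef}. First I would apply the plethystic substitution $X^\bullet\mapsto(1-q\sigma^{-1})^{-1}X^\bullet$ to convert the basis $\{H_\lambda\}$ into the Schur basis $\{s_{\vec\mu}\}$ via a unitriangular change-of-basis matrix with respect to $\leq_r$; the leading diagonal entry equals $1$ by condition (iii). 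The dual substitution applied to $H_\lambda^\dagger$, together with the $M^T$-twist already present in $\langle\cdot,\cdot\rangle_*$, makes the Gram matrix block-diagonal on $\leq_r$-chains, so its diagonal entries are precisely the $N_\lambda$ to be computed.

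The second step is to evaluate those diagonal entries. Expanding the pairing using $\langle s_{\vec\mu},s_{\vec\nu}\rangle=\delta_{\vec\mu,\vec\nu}$, the adjointness of $\underline{F}[X^\bullet]$ and $F^\perp[X^\bullet]$, and the plethystic exponential identity $\Omega[X+Y]=\Omega[X]\Omega[Y]$, the diagonal entry reduces to a product of classical modified Macdonald-type norms on each component of the quotient $\quot(\lambda)=(\lambda^0,\ldots,\lambda^{r-1})$, twisted by monomials in $q$ and $t$ that record the content/color statistics $\chi_\square$. Applying the known arm-leg product formula for classical Macdonald norms on each $\lambda^j$ then gives $N_\lambda$ as a product indexed by $\bigsqcup_j\lambda^j$, weighted by those twisting characters.

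The main obstacle is the final combinatorial repackaging. One must show that the classical bijection between cells of $\quot(\lambda)$ and cells $\square\in\lambda$ with $h_\lambda(\square)\equiv0\pmod r$ transports the arm/leg data of the quotient components into the arm/leg data $(a_\lambda(\square),l_\lambda(\square))$ of the ambient partition, and that the accumulated $q,t$ twists collapse to exactly the factors $(1-q^{a_\lambda(\square)+1}t^{-l_\lambda(\square)})(1-q^{-a_\lambda(\square)}t^{l_\lambda(\square)+1})$. This bookkeeping is where the argument becomes delicate: the arm and leg in the quotient live in the components $\lambda^j$, while the exponents $a_\lambda(\square)+1$ and $l_\lambda(\square)+1$ are measured in the ambient $\lambda$, and the identification requires a careful analysis of how hooks on the $r$-divisible diagonal interact with the Maya-diagram description of $\quot(\lambda)$ from Section \ref{PartNot}.

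This is exactly the combinatorial content that the OSWreath argument sidesteps by interpreting $N_\lambda$ as a product of $\mathbb{Z}/r\mathbb{Z}$-invariant tangent weights at the torus fixed point $\lambda$ of the relevant Nakajima quiver variety: the hook-divisibility locus emerges automatically as the invariant part of a representation of $\mathbb{Z}/r\mathbb{Z}$ on the tangent space, and the weights $q^{a+1}t^{-l}$ and $q^{-a}t^{l+1}$ are read off directly from the equivariant structure. I would expect any purely plethystic derivation to essentially recapitulate this match-up, and so I would defer to the geometric argument in the cited reference rather than attempting a self-contained combinatorial proof here.
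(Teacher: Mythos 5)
The paper offers no proof of this proposition at all: it is simply quoted as Theorem~3.32 of the cited reference, and the surrounding text relies on it as an external input. Your proposal correctly recognizes this and, after sketching what a plethystic rederivation would require (the quotient-to-ambient arm/leg bookkeeping via the Young--Maya correspondence), defers to the geometric argument in the cited source — which is exactly what the paper does.
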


The \textit{Cauchy kernel} in the wreath case is given by
\begin{align*}
\Omega\left[- (\iota X^\bullet) \frac{Y^\bullet}{M^T}\right] &=\Omega\left[- X^\bullet \frac{(\iota Y^\bullet)}{M}\right] \\
&= \sum_{\core(\mu) = \alpha} 
\frac{H_\mu^\dagger[X^\bullet] H_\mu[Y^\bullet]}{N_\mu} 
= \sum_{\core(\mu) = \alpha} 
\frac{H_\mu[X^\bullet] H_\mu^\dagger[Y^\bullet]}{N_\mu},
\end{align*}
where $\alpha$ is any fixed core.
Note here that 
we are multiplying expressions involving the coordinates $\epsilon_i$. So, for instance,
\begin{align*}
\iota(X^\bullet) Y^\bullet &= (\epsilon_0 X^0 + \epsilon_{-1} X^{(1)} + \cdots + \epsilon_{1-r} X^{r-1})(\epsilon_0 Y^{(0)} + \cdots + \epsilon_{r-1} Y^{(r-1)})
\\
& = \epsilon_0 X^{(0)}Y^{(0)} + \epsilon_1 X^{(r-1)}Y^{(1)}+ \cdots + \epsilon_{r-1} X^{(1)} Y^{(r-1)}.
\end{align*}

These are some of the benefits of utilizing coordinates and vector plethysms as defined---we are able to treat expressions in the plethystic brackets as algebraic expressions.

\subsection{An extension of multisymmetric functions}
We let $\mathcal{W} = \Lambda^{\otimes r} \otimes \mathbb{C}[Q]$,
where $\mathbb{C}[Q]$ is the span of elements $e^{(c_0,\dots, c_{r-1})}$ with $c_0+\cdots +c_{r-1} = 0$ (that is, an element of the root lattice for $A_{r-1}$). Such an element $(c_0,\dots, c_{r-1})$ corresponds to an $r$-core, say $\alpha$, for which we instead write $e^{(c_0,\dots, c_{r-1})} = e^\alpha$. 

The star and Hall inner products on $\mathcal{W}$ are defined so that 
\begin{align*}
\left \langle 
F \otimes e^\alpha, G \otimes e^\beta
\right\rangle_\ast 
 \coloneqq \langle F , G \rangle_\ast \delta_{w_0\alpha,\beta}
 && \text{ and } && \left \langle 
F \otimes e^\alpha, G \otimes e^\beta
\right\rangle
 \coloneqq \langle F , G \rangle  \delta_{w_0\alpha,\beta}.
\end{align*}

\section{Operators on wreath Macdonald polynomials}
\begin{defn}
    We define the Delta operators by setting, for any $G \in \Lambda^{\otimes r}$, 
    \begin{align*}
G[D] \left( H_\lambda \otimes e^{\core(\lambda)} \right) = G[D^{\bullet}_\lambda] H_\lambda \otimes  e^{\core(\lambda)} \text{ and } \\
G[D^\dagger] \left(  H^\dagger_\lambda \otimes e^{w_0\core(\lambda)} \right) = G[D^{\bullet}_\lambda] H^\dagger_\lambda \otimes  e^{w_0 \core(\lambda)}.
    \end{align*}
\end{defn}
Note here that $D^{\bullet}_\lambda = \sum_{i \in \ZZ/r\ZZ} \epsilon_i D^{(i)}_\lambda$, so that
\[
G[\epsilon_i D] H_\lambda \otimes e^{\core(\alpha)} 
= G[ \epsilon_i D^{(i)}_\lambda] H_\lambda \otimes e^{\core(\alpha)} ,
\]
and likewise,
since
\begin{align*}
\epsilon_i \frac{ D_\lambda^\bullet}{M}
& =\epsilon_i \left( \sum_{a,b\geq0 } q^{a}t^{b} \sigma^{a-b} \sum_{ j \in \ZZ/r\ZZ} \epsilon_{j} D^{(j)}_\lambda \right)
\\
& =\epsilon_i \left( \sum_{a,b\geq0 }   \sum_{ j \in \ZZ/r\ZZ} \epsilon_{j-a+b} q^{a}t^{b} D^{(j)}_\lambda \right)
\\
& =\epsilon_i \left( \sum_{ p \in \mathbb{Z}/r\mathbb{Z}} \epsilon_p \sum_{a,b\geq0 }  q^{a}t^{b} D^{(p+a-b)}_\lambda \right) 
\\
& = \left( \frac{D_\lambda}{M_0} \right)^{(i)},
\end{align*}
we have that
\begin{equation}\label{epsilonDM}
G\left[\epsilon_i \frac{D}{M}\right] H_\lambda \otimes e^{\core(\alpha)} 
= G\left[ \epsilon_i \left(\frac{D_\lambda}{M_0}\right)^{(i)} \right] H_\lambda \otimes e^{\core(\alpha)} .
\end{equation}
One can also define a Delta operator given by 
\begin{align*}
G[B] \left( H_\lambda \otimes e^{\core(\lambda)} \right) = G[B^{\bullet}_\lambda] H_\lambda \otimes  e^{\core(\lambda)} \text{ and } \\
G[B^\dagger] \left(  H^\dagger_\lambda \otimes e^{w_0\core(\lambda)} \right) = G[B^{\bullet}_\lambda] H^\dagger_\lambda \otimes  e^{w_0 \core(\lambda)},
    \end{align*}
though we make less use of this version in our discourse.

We now define a family of nabla operators:
\begin{defn}
For any color $j$, let
\[T^{(j)}_\lambda \coloneqq \prod_{ \substack{ \square \in \lambda/\core(\lambda) \\ \bar{c}_\square = j}} (- \chi_\square) ~~ = ~(-1)^{|\quot(\lambda)|}\frac{e_{d_j(\lambda)}\left[B_\lambda^{(j)}\right]}{e_{d_j(\core(\lambda))}\left[ B_{\core(\lambda)}^{(j)} \right]}.\]
Then define 
\[
\nabla^{(j)} H_\lambda \otimes e^{\core(\lambda)}
\coloneqq
T^{(j)}_\lambda H_\lambda \otimes e^{\core(\lambda)}.
\]
We also set $\nabla \coloneqq\nabla^{(0)}$.
\end{defn}
It is shown in \cite{RW} that the following is true.
\begin{prop}
    For any partition $\lambda$,
    \[
T^{(0)}_\lambda = H_{w_0\lambda}[\iota D_{\core(\lambda)}^\bullet] = H_\lambda[ \iota D_{w_0 \core(\lambda)}^\bullet ] = T^{(0)}_{w_0 \lambda}.
    \]
\end{prop}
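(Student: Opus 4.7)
The statement is a chain of three equalities, which I organize as follows.

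The outer equality $T^{(0)}_\lambda = T^{(0)}_{w_0\lambda}$ is immediate from Proposition \ref{RevProp}: the partitions $\lambda$ and $w_0\lambda$ share the same set of color-$0$ cells, and since $\core(w_0\lambda) = w_0\core(\lambda)$, applying the same Proposition to the cores shows that $\core(\lambda)$ and $\core(w_0\lambda)$ also share color-$0$ cells. Hence the products $\prod_{\bar{c}_\square = 0}(-\chi_\square)$ over $\lambda/\core(\lambda)$ and over $w_0\lambda/\core(w_0\lambda)$ coincide. Also, the equality $H_\lambda[\iota D^\bullet_{w_0\core(\lambda)}] = T^{(0)}_{w_0\lambda}$ is obtained from $T^{(0)}_\lambda = H_{w_0\lambda}[\iota D^\bullet_{\core(\lambda)}]$ by substituting $\lambda \mapsto w_0\lambda$ and using $w_0^2 = \mathrm{id}$ together with $\core(w_0\lambda) = w_0\core(\lambda)$. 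Thus the entire statement reduces to proving
$$T^{(0)}_\lambda = H_{w_0\lambda}\!\left[\iota D^\bullet_{\core(\lambda)}\right]. \qquad (\star)$$

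To prove $(\star)$, set $\alpha = \core(\lambda)$. Expanding $D^{(i)}_\lambda = -tB^{(i-1)}_\lambda + (1+qt)B^{(i)}_\lambda - qB^{(i+1)}_\lambda - \delta_{i,0}$ and collecting $\epsilon_i$-components yields the vector identity $D^\bullet_\alpha = M B^\bullet_\alpha - \epsilon_0$. Combined with $\iota M = M^T \iota$ and $\iota \epsilon_0 = \epsilon_0$, this gives
$$\iota D^\bullet_\alpha = M^T \iota B^\bullet_\alpha - \epsilon_0.$$
From the operator identity $1-t\sigma = -t\sigma(1 - t^{-1}\sigma^{-1})$ we obtain $M^T/(1-t^{-1}\sigma^{-1}) = -t\sigma(1-q\sigma^{-1})$, so that $\iota D^\bullet_\alpha = (1-t^{-1}\sigma^{-1})Y^\bullet$ with
$$Y^\bullet := -t\sigma(1-q\sigma^{-1})\iota B^\bullet_\alpha - \frac{\epsilon_0}{1-t^{-1}\sigma^{-1}}.$$
Defining condition (2) of Definition \ref{ModDef} then expands
$$H_{w_0\lambda}[\iota D^\bullet_\alpha] = \sum_{\mu \leq_r w_0\lambda} c_\mu(q,t)\, s_{\quot(\mu)}[Y^\bullet],$$
where $c_{w_0\lambda}$ is the explicit leading coefficient fixed by the normalization $H_{w_0\lambda}[\epsilon_0]=1$.

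The remaining content of the proof is two claims: (a) $s_{\quot(\mu)}[Y^\bullet] = 0$ for every $\mu \leq_r w_0\lambda$ with $\mu \neq w_0\lambda$, and (b) the surviving leading contribution $c_{w_0\lambda}(q,t)\, s_{\quot(w_0\lambda)}[Y^\bullet]$ equals $\prod_{\square \in \lambda/\alpha,\,\bar{c}_\square = 0}(-\chi_\square) = T^{(0)}_\lambda$. Coordinate-wise, each $Y^{(i)}$ is a combination of the finite color alphabets $B_\alpha^{(-i-1)}$ and $B_\alpha^{(-i)}$ together with a universal $(q,t)$-constant; both claims then reduce to concrete Schur-function computations organized by the core-quotient dictionary, (b) being a direct top-term extraction and (a) a vanishing statement tied to the color counts $d_i(\alpha)$ and the $w_0$-twist.

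\textbf{Main obstacle.} The decisive step is the vanishing (a). It is a wreath analogue of the classical Macdonald specialization vanishing, twisted by $\iota$, $\sigma$, and the longest element $w_0$. Careful tracking of how the core-quotient decomposition for $w_0\lambda$ interacts with the twisted alphabets $B_\alpha^{(-i-1)}$, $B_\alpha^{(-i)}$ is the crux of the argument; once it is in hand, identifying the surviving term with $T^{(0)}_\lambda$ is a routine calculation using the explicit leading coefficient in condition (2) and the product definition of $T^{(0)}$.
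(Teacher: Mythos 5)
The paper itself does not prove this proposition; it is cited from \cite{RW}, so there is no in-paper proof to compare against directly. That said, your proposal contains a genuine gap: the entire argument hinges on claims (a) (term-by-term vanishing of the sub-leading Schur coefficients $s_{\quot(\mu)}[Y^\bullet]$ for $\mu <_r w_0\lambda$) and (b) (identification of the leading term with $T^{(0)}_\lambda$), and you explicitly leave both unproven, calling (a) ``the crux of the argument.'' A proof strategy that defers its decisive step is not a proof. The algebraic setup up to that point is sound --- the reduction of the three-fold equality to the single identity $(\star)$ via Proposition~\ref{RevProp} and the $w_0$-substitution is correct, as is the computation $\iota D^\bullet_\alpha = M^T\iota B^\bullet_\alpha - \epsilon_0$ and the subsequent factoring through $(1-t^{-1}\sigma^{-1})$ --- but none of that establishes $(\star)$.

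Beyond being incomplete, claim (a) is also dubious as stated. You describe $Y^{(i)}$ as ``a combination of the finite color alphabets $B_\alpha^{(-i-1)}$ and $B_\alpha^{(-i)}$ together with a universal $(q,t)$-constant,'' but the term $\epsilon_0/(1-t^{-1}\sigma^{-1})$ contributes to each color component an \emph{infinite} geometric series in $t^{-r}$, not a constant, so the proposed finite-alphabet vanishing mechanism does not obviously apply. More fundamentally, in classical Macdonald specialization identities (of which this is meant to be a wreath analogue) the Schur coefficients of the triangular expansion do \emph{not} vanish individually; the cancellation is collective. A term-by-term vanishing of the form you assert would be a much stronger statement and would require its own justification. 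The proofs of this kind of evaluation in the wreath setting (e.g.\ Corollary~\ref{cor:emptycore_evaluation} for the empty-core case) instead go through the Cauchy kernel, Tesler-type identities, or reciprocity, rather than through direct triangularity plus vanishing of individual terms. You would need either to prove your vanishing claim or to switch to one of those routes.
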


\section{The Tesler Identity and the map $\mathsf{V}$}
From \cite{RW}, we have a map 
\[\mathsf{V}
\coloneqq\nabla \PP^{(0)}_{-1/M^T} \TT^{(0)}_{1} \nabla 
\]
that relates wreath Macdonald polynomials to the exponential versions
\begin{align*}
\mathbb{E}_\lambda & \coloneqq 
 \Omega\left[- X^{\bullet} \frac{D^\bullet_\lambda}{M} \right] \\
& = \sum_{\core(\mu) = \alpha} \frac{ H^\dagger_\mu[X^\bullet] H_\mu[\iota D^\bullet_\lambda]}{N_\mu}
=\sum_{\core(\mu) = \alpha} \frac{ H_\mu[X^\bullet] H^\dagger_\mu[\iota D^\bullet_\lambda]}{N_\mu}.
\end{align*}
It has the following properties:
\begin{prop} \label{MainPropertiesofV}
    For any $F \in \Lambda^{\otimes r}$, we have
    \begin{itemize}
        \item $\mathsf{V} \underline{F} = F[\iota D^{\dagger}] \mathsf{V}$,
        \item $F^{\perp}[X^\bullet] \mathsf{V} = \mathsf{V}  F \left[ \frac{-D}{M} \right]$,
        \item $\mathsf{V}(1 \otimes e^{\alpha}) = \mathbb{E}_\alpha \otimes e^{\alpha},$
        \item $\mathsf{V} \nabla^{-1} H_{\lambda}[X^\bullet] \otimes e^{\core(\lambda)} = \mathbb{E}_\lambda \otimes e^{\core(\lambda)}.$
    \end{itemize}
    In particular, we have
    \begin{align*}
\TT^{(i)}_v \mathsf{V} = \mathsf{V} \Omega\left[ -v\epsilon_i
\frac{   D}{M}\right] && \text{ and }&& \mathsf{V} \PP_{-v/M^T}^{(i)} = \Omega\left[- v \epsilon_i\frac{ \iota D^\dagger}{M^T } \right] \mathsf{V}.
    \end{align*}
\end{prop}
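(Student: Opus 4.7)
Proof proposal:

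The plan is to treat statement (3) as the main input, since it is essentially the wreath Tesler identity of \cite{RW}, and derive the remaining statements from it. Unpacking (3), the defining conditions for $H_\lambda$ force $H_\alpha = 1$ when $\alpha$ is an $r$-core, because $\quot(\alpha) = \vec{0}$ makes the relevant spans one-dimensional and the normalization $H_\alpha[\epsilon_0] = 1$ pins down the constant. Thus $\nabla(1 \otimes e^\alpha) = T^{(0)}_\alpha \cdot 1 \otimes e^\alpha = 1 \otimes e^\alpha$ (empty product over cells in $\alpha/\core(\alpha) = \emptyset$), and $\TT^{(0)}_1$ fixes constants, so (3) reduces to the Tesler identity $\nabla\,\Omega[-\epsilon_0 X^\bullet/M^T] \otimes e^\alpha = \mathbb{E}_\alpha \otimes e^\alpha$.

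For the fourth property, I would compute $\mathsf{V}\nabla^{-1} H_\lambda \otimes e^{\core(\lambda)} = \nabla\bigl(\Omega[-\epsilon_0 X^\bullet/M^T]\, H_\lambda[X^\bullet + \epsilon_0]\bigr) \otimes e^{\core(\lambda)}$ and identify the result with $\mathbb{E}_\lambda \otimes e^{\core(\lambda)}$. Expanding $H_\lambda[X^\bullet + \epsilon_0]$ in the dagger basis via the star-pairing Cauchy kernel reduces the computation to applying the Tesler identity term by term, after which one collects into the $H^\dagger_\mu$-Cauchy expansion of $\mathbb{E}_\lambda$.

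Given (4), properties (1) and (2) are checked on the basis $\{H_\lambda \otimes e^{\core(\lambda)}\}$ using the consequence $\mathsf{V}(H_\lambda \otimes e^{\core(\lambda)}) = T^{(0)}_\lambda \mathbb{E}_\lambda \otimes e^{\core(\lambda)}$. For (2), applying the plethystic derivative rule to $\mathbb{E}_\lambda = \Omega[-X^\bullet D^\bullet_\lambda/M]$ gives $F^\perp[X^\bullet]\,\mathbb{E}_\lambda = F[-D^\bullet_\lambda/M]\,\mathbb{E}_\lambda$, matching the action of $\mathsf{V} F[-D/M]$ on $H_\lambda \otimes e^{\core(\lambda)}$ via equation (\ref{epsilonDM}). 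For (1), expand $\mathbb{E}_\lambda$ in the dagger basis indexed by cores $\core(\mu) = w_0\core(\lambda)$ so that $e^{w_0\core(\mu)} = e^{\core(\lambda)}$; then $F[\iota D^\dagger]$ acts diagonally with eigenvalue $F[\iota D^\bullet_\mu]$, and the equality with $\mathsf{V}(F H_\lambda)$ reduces to a reciprocity-type identity relating the $H_\mu$-coefficients of $F H_\lambda$ to pairings $H^\dagger_\mu[\iota D^\bullet_\lambda]$, which follows by expanding $F H_\lambda$ under $\langle\cdot,\cdot\rangle_\ast$.

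The main obstacle is the $e^\alpha$ bookkeeping in (1): since $F[\iota D^\dagger]$ is defined on $H^\dagger_\mu \otimes e^{w_0\core(\mu)}$ but must act on $\mathbb{E}_\lambda \otimes e^{\core(\lambda)}$, one must select the correct core sector in the Cauchy expansion and use the core-independence of the wreath Cauchy kernel. The commutation relations at the end are immediate specializations: taking $F[X^\bullet] = \Omega[v\epsilon_i X^\bullet]$ in (2), so that $F^\perp = \TT^{(i)}_v$, yields $\TT^{(i)}_v \mathsf{V} = \mathsf{V}\,\Omega[-v\epsilon_i D/M]$; taking $F[X^\bullet] = \Omega[-v\epsilon_i X^\bullet/M^T]$ in (1), so that $\underline{F} = \PP^{(i)}_{-v/M^T}$ and $F[\iota D^\dagger] = \Omega[-v\epsilon_i \iota D^\dagger/M^T]$, yields $\mathsf{V}\,\PP^{(i)}_{-v/M^T} = \Omega[-v\epsilon_i \iota D^\dagger/M^T]\,\mathsf{V}$.
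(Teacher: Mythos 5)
The paper does not give its own proof of this proposition; it is imported from \cite{RW} and stated without argument, so there is no in-text derivation to match against. Your attempt therefore has to be judged on its internal logic, and there are two problems.

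First, you take (3) as the main input and try to derive (4) from it, but (4) is the \emph{strong form} of the wreath Tesler identity---(3) is the special case $\lambda = \core(\lambda)$, where $H_\lambda = 1$ and $\nabla$ acts trivially. The plan to obtain (4) by ``expanding $H_\lambda[X^\bullet + \epsilon_0]$ in the dagger basis \ldots applying the Tesler identity term by term'' cannot work as written: what has to be computed is $\nabla\bigl(\Omega[-\epsilon_0 X^\bullet/M^T]\,H_\lambda[X^\bullet + \epsilon_0]\bigr)$, and $\nabla$ is diagonal in the $H_\mu$ basis but does not commute with multiplication by $H_\lambda[X^\bullet+\epsilon_0]$, so you cannot factor that term out, apply (3) to what remains, and recombine. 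Going from the core case to the general case is the content of the theorem, not a corollary of it.

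Second, the step from (4) to (1) is circular in the context of this paper. After expanding $T^{(0)}_\lambda\mathbb{E}_\lambda$ and $\mathsf{V}(FH_\lambda)$ in the dagger basis, the equality you need reduces precisely to the reciprocity $T^{(0)}_\nu H_\mu[\iota D_\nu^\bullet] = T^{(0)}_\mu H_\nu[\iota D_\mu^\bullet]$; but the paper derives that reciprocity (Proposition \ref{Recip}) \emph{from} items (1), (3), and (4) of this very proposition. Saying it ``follows by expanding $FH_\lambda$ under $\langle\cdot,\cdot\rangle_\ast$'' does not supply an independent argument. Your deduction of (2) from (4) via the reproducing-kernel property $F^\perp[X^\bullet]\Omega[X^\bullet Z^\bullet] = F[Z^\bullet]\Omega[X^\bullet Z^\bullet]$ is sound, and so is reading the final two commutation formulas off as the specializations $F = \Omega[v\epsilon_i X^\bullet]$ in (2) and $F = \Omega[-v\epsilon_i X^\bullet/M^T]$ in (1); but the chain $(3)\Rightarrow(4)\Rightarrow(1)$ does not close without new input.
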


\subsection{Wreath Macdonald-Koornwinder reciprocity revisited}
We now look at some consequences of the
 properties in Proposition \ref{MainPropertiesofV} that follow from our vector plethysm rules. We will treat this section as an ``exercise'' on using vector plethysms, while highlighting some of the important properties of wreath Macdonald polynomials. Let $\core(\lambda) = \alpha$.
 First note that from the last item in Proposition \ref{MainPropertiesofV},
\begin{align*}
\mathsf{V} H_\lambda \otimes e^{\alpha} 
 = T^{(0)}_\lambda \Omega\left[ - X^\bullet \frac{ D^\bullet_\lambda}{M} \right] \otimes e^{\alpha}
 = T^{(0)}_\lambda \sum_{w_0 \core(\mu) = \alpha} \frac{ H^\dagger_\mu[X^\bullet] H_\mu[\iota D^\bullet_\lambda]}{N_\mu} \otimes e^{\alpha}.
\end{align*}
On the other hand, one has
\begin{align*}
    \mathsf{V} H_\lambda \otimes e^{\alpha} 
& = H_\lambda[ \iota D^\dagger] \mathsf{V} ( 1 \otimes e^\alpha) \\
& = H_\lambda[ \iota D^\dagger] \sum_{w_0 \core(\mu) = \alpha} \frac{ H^\dagger_\mu[X^\bullet] H_\mu[\iota D^\bullet_\alpha]}{N_\mu} \otimes e^{\alpha} \\
& =  \sum_{w_0 \core(\mu) = \alpha} \frac{ H_\lambda[ \iota D^\bullet_\mu]H^\dagger_\mu[X^\bullet] H_\mu[\iota D^\bullet_\alpha]}{N_\mu} \otimes e^{\alpha}.
\end{align*}
Equating the coefficient of $H_\mu^\dagger[X^\bullet]$ in both expressions gives
\begin{prop}[Reciprocity]\label{Recip}
    For $\lambda$ and $\mu$ with $w_0 \core(\mu) = \core(\lambda)$, 
    \[
\frac{ H_\lambda[ \iota D_\mu^\bullet]}{T^{(0)}_\lambda} = \frac{H_\mu[\iota D_\lambda^\bullet]}{T^{(0)}_\mu}.
    \]
\end{prop}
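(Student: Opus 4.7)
The plan is to carry out the two-way computation of $\mathsf{V}(H_\lambda \otimes e^\alpha)$ sketched in the paragraph immediately preceding the statement, where $\alpha = \core(\lambda)$, and then compare coefficients of the resulting $H^\dagger_\mu[X^\bullet]$ expansions. By hypothesis one has $w_0\core(\mu) = \alpha$, which will match exactly the summation condition that naturally appears in the Cauchy-type expansion of $\mathbb{E}_\alpha$.

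For the first evaluation, I would combine the fourth bullet of Proposition \ref{MainPropertiesofV}, $\mathsf{V}\nabla^{-1}(H_\lambda \otimes e^\alpha) = \mathbb{E}_\lambda \otimes e^\alpha$, with $\nabla H_\lambda = T^{(0)}_\lambda H_\lambda$ to get
\[
\mathsf{V}(H_\lambda \otimes e^\alpha) \;=\; T^{(0)}_\lambda\, \mathbb{E}_\lambda \otimes e^\alpha,
\]
then expand $\mathbb{E}_\lambda$ in the $H^\dagger_\mu$-basis via its definition. For the second, I would instead apply the first bullet $\mathsf{V}\,\underline{H_\lambda} = H_\lambda[\iota D^\dagger]\,\mathsf{V}$ followed by the third bullet $\mathsf{V}(1 \otimes e^\alpha) = \mathbb{E}_\alpha \otimes e^\alpha$. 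Expanding $\mathbb{E}_\alpha$ in the $H^\dagger_\mu$-basis, the operator $H_\lambda[\iota D^\dagger]$ acts as the scalar $H_\lambda[\iota D^\bullet_\mu]$ on each summand $H^\dagger_\mu \otimes e^{w_0 \core(\mu)}$, giving the second expression written in the derivation above the statement.

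Equating the coefficients of $H^\dagger_\mu[X^\bullet]$ for each $\mu$ with $w_0\core(\mu) = \alpha$ and canceling the common factor $N_\mu^{-1}$ yields
\[
T^{(0)}_\lambda\, H_\mu[\iota D^\bullet_\lambda] \;=\; H_\lambda[\iota D^\bullet_\mu] \cdot H_\mu[\iota D^\bullet_\alpha].
\]
The last step is to recognize the factor $H_\mu[\iota D^\bullet_\alpha]$ as $T^{(0)}_\mu$: by the proposition stated right after the definition of $\nabla^{(j)}$, $T^{(0)}_\mu = H_\mu[\iota D^\bullet_{w_0 \core(\mu)}]$, and the hypothesis $w_0\core(\mu) = \alpha$ identifies this with $H_\mu[\iota D^\bullet_\alpha]$. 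Dividing both sides by $T^{(0)}_\lambda T^{(0)}_\mu$ produces the desired symmetric identity.

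The only real obstacle I foresee is bookkeeping with the cores and the action of $w_0$: one must ensure that both expansions of $\mathsf{V}(H_\lambda \otimes e^\alpha)$ are summed over exactly the same index set of partitions $\mu$, and that the identification $H_\mu[\iota D^\bullet_\alpha] = T^{(0)}_\mu$ uses precisely the hypothesis $w_0\core(\mu) = \core(\lambda)$. Once these alignments are in place, no further calculation is required; the proof is a direct consequence of the eigenvalue/intertwining structure recorded in Proposition \ref{MainPropertiesofV}.
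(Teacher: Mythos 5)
Your proposal is correct and follows essentially the same route as the paper: compute $\mathsf{V}(H_\lambda \otimes e^\alpha)$ once via the fourth bullet of Proposition \ref{MainPropertiesofV} (together with $\nabla H_\lambda = T^{(0)}_\lambda H_\lambda$) and once via the first and third bullets, expand both sides in $\{H^\dagger_\mu\}$ over the index set $w_0\core(\mu) = \alpha$, equate coefficients, and then identify $H_\mu[\iota D^\bullet_\alpha] = T^{(0)}_\mu$ using the proposition following the definition of $\nabla^{(j)}$. The bookkeeping you flag — that $H^\dagger_\mu$ sits in the $e^{w_0\core(\mu)}$ slot so that the summation index is forced to satisfy $w_0\core(\mu)=\alpha$ on both sides — is exactly the alignment the paper relies on, and it works out.
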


We now extend Proposition \ref{Recip} to a wreath Macdonald-Koornwinder reciprocity by using our vector plethysm rules.
We first record degrees with an extra variable $z$, writing
\[
\nabla \PP^{(0)}_{-z/M^T} \TT^{(0)}_{1/z}  H_\lambda[z X^\bullet] 
= \Omega\left[ -z X^{\bullet} \frac{D^\bullet_\lambda}{M}\right] = \mathbb{E}_\lambda[zX^\bullet].
\]
We will now apply $\nabla^{-1} \mathsf{V} \nabla^{-1} =  \PP^{(0)}_{-1/M^T} \TT^{(0)}_{1}  $ to $\mathbb{E}_\lambda[zX^{\bullet}]$ two different ways: Let $\core(\lambda) = \alpha$, and consider the following computations in $\Lambda^{\otimes r} \otimes e^\alpha$ (so we can omit the $-\otimes e^{\alpha}$ in every line).
\begin{enumerate}
    \item First, we write
    \begin{align*}
        \PP^{(0)}_{-1/M^T} \TT^{(0)}_{1}  \mathbb{E}_\lambda[zX^{\bullet}]
        & = \Omega\left[ - \epsilon_0 \frac{X^{\bullet}}{M^T} \right] \mathbb{E}_\lambda[z(X^{\bullet} +\epsilon_0) ]\\
        &=
        \Omega\left[ - \epsilon_0 \frac{X^{\bullet}}{M^T} \right] \Omega\left[ -z (X^{\bullet} + \epsilon_0) \frac{ D^\bullet_\lambda}{M}\right] \\
        &=
        \Omega\left[ - X^{\bullet} \frac{\epsilon_0}{M} \right] \Omega\left[ - X^{\bullet} \frac{z D^\bullet_\lambda}{M}\right]
        \Omega\left[ -z \epsilon_0 \frac{ D^\bullet_\lambda}{M}\right] \\
       & =
        \Omega\left[ - X^{\bullet} \frac{ \epsilon_0 + z D^\bullet_\lambda}{M} \right]
        \Omega\left[ -z \left(  \frac{ D_\lambda}{M_0}\right)^{(0)}\right] \\
        & = \Omega\left[ -z \left(  \frac{ D_\lambda}{M_0}\right)^{(0)}\right]
        \sum_{w_0\core(\mu) = \alpha} 
        \frac{H_\mu^\dagger[X^\bullet] H_\mu[\epsilon_0 + z \iota D_\lambda^\bullet]}{N_\mu}.
    \end{align*}
\item On the other hand, 
\begin{align*}
    \nabla^{-1} \mathsf{V} \nabla^{-1} \mathbb{E}_\lambda[zX^{\bullet}] 
    & = 
     \nabla^{-1} \mathsf{V} \nabla^{-1} 
     \nabla
     \PP_{-z/M^T}^{(0)} \TT^{(0)}_{1/z} 
     H_\lambda[zX^{\bullet}]
     \\
      & = 
     \nabla^{-1} \mathsf{V} 
     \Omega\left[ - z \epsilon_0 \frac{X^{\bullet}}{M^T} \right] 
     H_\lambda[ \epsilon_0 + zX^{\bullet}]
    \\
     & = 
     \nabla^{-1} \Omega\left[ -z \epsilon_0 \frac{\iota D^{\dagger}}{M^T}  \right] 
     H_\lambda[\iota(\epsilon_0+ z \iota D^{\dagger}) ]
      \mathsf{V} \left(1\right)\\
      & = 
       \Omega\left[ -z \epsilon_0  \frac{D^{\dagger}}{M} \right] 
    H_\lambda[\epsilon_0 + z \iota D^{\dagger} ]
      \nabla^{-1} \mathbb{E}_\alpha[X^\bullet]
     \\
      & = 
      \sum_{ w_0\core(\mu) = \alpha}
      \frac{H_{ \mu}^\dagger[X^\bullet]}{N_\mu} 
      \Omega\left[ -z \left(\frac{D_{\mu}}{{M_0}}\right)^{(0)} \right] 
     H_\lambda[\epsilon_0 + z \iota D_{\mu}^{\bullet} ].
\end{align*}
\end{enumerate}
Comparing the coefficient of $H_\mu^\dagger[X^\bullet]$ in both of our computations, and using that 
\[
\Omega\left[ -z \left( \frac{D_\mu}{M_0}\right)^{(0)}\right] = \Omega\left[ -z  B_\mu^{(0)} \right] \Omega\left[ - \left(\frac{z}{M_0}\right)^{(0)} \right] = \left(\prod_{ \substack{\square \in \mu \\ \bar{c}_\square = 0}} 1-z\chi_\square\right) \Omega\left[ - \left(\frac{z}{M_0}\right)^{(0)} \right],
\]
we get that
\begin{prop}[Wreath Macdonald-Koornwinder Reciprocity]\label{wreathMKReciprocity}
If $w_0 \core(\mu) = \core(\lambda)$, then
\[
\frac{H_\lambda[ \epsilon_0 + z \iota D_\mu^\bullet]}{ \prod_{ \substack{\square \in \lambda \\ \bar{c}_\square = 0}} 1-z\chi_\square}
=
\frac{H_\mu[ \epsilon_0 + z \iota D_\lambda^\bullet]}{ \prod_{ \substack{\square \in \mu \\ \bar{c}_\square = 0}} 1-z\chi_\square}.
\]
More generally, it is shown in \cite{RW} that if $ w_0 \sigma^{-k}\core(\mu) = \core(\lambda)$, then 
\begin{equation}
 \frac{H_\lambda[ \epsilon_0 + z  \iota \sigma^k D_\mu^\bullet]}{ \prod_{ \substack{\square \in \lambda \\ \bar{c}_\square = k}} 1-z\chi_\square}
=
\frac{H_\mu[ \epsilon_0 + z  \iota \sigma^k D_\lambda^\bullet]}{ \prod_{ \substack{\square \in \mu \\ \bar{c}_\square = k}} 1-z\chi_\square}.   
\label{ShiftMK}
\end{equation}
\end{prop}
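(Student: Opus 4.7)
The plan is to compute the action of $\nabla^{-1}\mathsf{V}\nabla^{-1} = \PP^{(0)}_{-1/M^T}\TT^{(0)}_{1}$ on the generating series $\mathbb{E}_\lambda[zX^\bullet]$ in two ways and compare coefficients of $H_\mu^\dagger[X^\bullet]$, exactly along the lines of the derivation of Proposition \ref{Recip} but with an auxiliary variable $z$ tracking degree.

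The first computation is direct plethystic substitution. Starting from $\mathbb{E}_\lambda[zX^\bullet] = \Omega[-zX^\bullet D_\lambda^\bullet/M]$, the shift $\TT^{(0)}_1:X^\bullet\mapsto X^\bullet+\epsilon_0$ combined with multiplication by $\Omega[-\epsilon_0 X^\bullet/M^T]$ produces a single Cauchy kernel $\Omega[-X^\bullet(\epsilon_0+zD_\lambda^\bullet)/M]$ times an $X^\bullet$-independent prefactor $\Omega[-z(D_\lambda/M_0)^{(0)}]$ extracted using identity \eqref{epsilonDM}. Expanding the kernel in the dagger basis yields
\[
\Omega\!\left[-z\left(\frac{D_\lambda}{M_0}\right)^{(0)}\right]\sum_{w_0\core(\mu)=\alpha}\frac{H_\mu^\dagger[X^\bullet]\,H_\mu[\epsilon_0+z\iota D_\lambda^\bullet]}{N_\mu}.
\]

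The second computation is a commutation. Rewriting $\mathbb{E}_\lambda[zX^\bullet] = \nabla\PP^{(0)}_{-z/M^T}\TT^{(0)}_{1/z}H_\lambda[zX^\bullet]$ makes the $z$-dependence explicit. I would then use the four bullets of Proposition \ref{MainPropertiesofV} to slide $\mathsf{V}$ to the right past $\PP^{(0)}_{-z/M^T}$, $\TT^{(0)}_{1/z}$, and multiplication by $H_\lambda[zX^\bullet]$; each slide replaces an operator in $X^\bullet$ by its $\iota D^\dagger$ analogue. The chain terminates at $\mathsf{V}(1\otimes e^\alpha) = \mathbb{E}_\alpha\otimes e^\alpha$, whose dagger expansion gives
\[
\sum_{w_0\core(\mu)=\alpha}\frac{H_\mu^\dagger[X^\bullet]}{N_\mu}\,\Omega\!\left[-z\left(\frac{D_\mu}{M_0}\right)^{(0)}\right]H_\lambda[\epsilon_0+z\iota D_\mu^\bullet].
\]

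Equating coefficients of $H_\mu^\dagger[X^\bullet]$ across the two expansions, and using $D_\nu=M_0B_\nu-1$ to factor $\Omega\!\left[-z(D_\nu/M_0)^{(0)}\right] = \Omega[-zB_\nu^{(0)}]\cdot\Omega\!\left[(z/M_0)^{(0)}\right]$, the $\nu$-independent second factor cancels from both sides, and $\Omega[-zB_\nu^{(0)}]$ simplifies to $\prod_{\square\in\nu,\ \bar c_\square=0}(1-z\chi_\square)$, producing the claimed reciprocity. The main obstacle is the bookkeeping in the commutation step: one must carefully track how the $\iota$'s and $\sigma$'s in the bulleted identities compose so that the sliding operators reassemble into $\iota D_\mu^\bullet$ after evaluation on $\mathbb{E}_\alpha\otimes e^\alpha$. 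The shifted identity \eqref{ShiftMK} is then obtained in \cite{RW} by running the same argument with a $\sigma^k$-twisted form of $\mathsf{V}$.
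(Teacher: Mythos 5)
Your proposal is correct and follows the paper's derivation essentially step for step: the same two-sided computation of $\nabla^{-1}\mathsf{V}\nabla^{-1}$ on $\mathbb{E}_\lambda[zX^\bullet]$, followed by comparison of $H_\mu^\dagger$-coefficients and cancellation of the $\mu$-independent $\Omega$-factor. (Minor notes: your sign $\Omega[(z/M_0)^{(0)}]$ is actually the correct one since $D_\nu/M_0 = B_\nu - 1/M_0$; and the paper does not literally commute $\mathsf{V}$ past $\TT^{(0)}_{1/z}$ but first evaluates $\PP^{(0)}_{-z/M^T}\TT^{(0)}_{1/z}H_\lambda[zX^\bullet]$ to a concrete multiplication operator before sliding $\mathsf{V}$ through, which is what your sketch effectively does.)
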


\begin{rem}
    The plethysms in \eqref{ShiftMK} look different from the ones from \cite{RW}.
    This is due to the different treatment of plethysm in this paper.
    The two formalisms coincide in their action on multi-symmetric functions.
    On the other hand, in \textit{loc. cit.}, we do not define $\sigma$ and $\iota$ on a sum of characters like $D_\mu^\bullet$; rather, an expression such as $f[\sigma^k\iota D_\mu^\bullet]$ is meant to be interpreted as performing $f[\sigma^k\iota X^\bullet]$ and then evaluating at $D_\mu^\bullet$.
    This results in the sequence of transformations
    \[
    p_n[X^{(i)}]\mapsto p_n[X^{(k-i)}]\mapsto p_n[D_\mu^{(k-i)}].
    \]
    In the formalism used in this paper, we have
    \[
    \epsilon_i\left(\iota\sigma^k D_\mu^\bullet\right)=D_\mu^{(k-i)}.
    \]
    Therefore, the evaluations in \cite{RW} and \eqref{ShiftMK} coincide.
\end{rem}

For a given diagram $\lambda$, let
\[
\Pi_{\lambda}^{(k)} = \prod_{\substack{\square \in \lambda/(1) \\ \bar{c}_\square = k}} 1- \chi_\square.
\]
\begin{cor} 
    If $w_0 \sigma^{-k}\core(\mu) =  \core(\lambda)$, then
    \[
\frac{H_\lambda[ \epsilon_0 - \epsilon_{k} + \iota \sigma^k  M^T B_\mu^{\bullet}]}{\Pi_\lambda^{(k)}}
=
\frac{H_\mu[ \epsilon_0 - \epsilon_{k} +\iota   \sigma^k  M^T B_\lambda^{\bullet}]}{\Pi_\mu^{(k)}}
.
    \]
In particular, for $k= 0$, we have
\[
\frac{H_\lambda[ M \iota B_\mu^\bullet]}{\Pi_{\lambda}^{(0)}}
=
\frac{H_\mu[ M \iota B_\lambda^\bullet]}{\Pi_{\mu}^{(0)}}.
\]
\end{cor}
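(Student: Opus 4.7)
The plan is to obtain the corollary by specializing equation \eqref{ShiftMK} at $z=1$, after rewriting the $D$-plethysm in the numerator as a $B$-plethysm. The preparatory identity to establish is
\[
MB_\mu^\bullet = D_\mu^\bullet + \epsilon_0,
\]
verified by a direct component-wise check: expanding $M = 1+qt-q\sigma-t\sigma^{-1}$ and applying $\epsilon_p$ gives $\epsilon_p MB_\mu^\bullet = (1+qt)B_\mu^{(p)} - qB_\mu^{(p+1)} - tB_\mu^{(p-1)}$, which matches $D_\mu^{(p)} + \delta_{p,0}$ using the explicit formula for $D_\mu^{(p)}$ recorded earlier.

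Applying $\iota\sigma^k$ to this identity and using $\iota\sigma^k\epsilon_0 = \iota\epsilon_{-k} = \epsilon_k$, I rewrite
\[
\epsilon_0 + z\,\iota\sigma^k D_\mu^\bullet = \epsilon_0 - z\,\epsilon_k + z\,\iota\sigma^k MB_\mu^\bullet,
\]
and the operator identity $\iota M = M^T\iota$ recorded with the definition of $M_k$ (equivalent to $\iota M^T = M\iota$) matches the $M^T$ form used in the statement. Substituting into \eqref{ShiftMK} and setting $z = 1$ then produces the corollary once we handle the denominators. For $k \neq 0$, the box $(0,0)$ has color $0 \neq k$ so it is not in the product, giving $\prod_{\square\in\lambda,\,\bar{c}_\square=k}(1-\chi_\square) = \Pi_\lambda^{(k)}$ on the nose (and similarly for $\mu$), and the corollary follows by direct substitution.

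The case $k = 0$ is more delicate because the $(0,0)$ box contributes the vanishing factor $1-z\chi_{(0,0)} = 1-z$ to both denominators. I treat \eqref{ShiftMK} as an identity of rational functions of $z$ with simple poles at $z = 1$ on each side, and extract residues at $z = 1$: canceling the common $(1-z)$ factor from each denominator replaces $\prod_{\square\in\lambda,\,\bar{c}_\square=0}(1-\chi_\square)$ by $\Pi_\lambda^{(0)}$ (and similarly for $\mu$). In the numerator, $\epsilon_0 - \epsilon_k = 0$ at $k = 0$ and $\sigma^0 = 1$, so it collapses to $H_\lambda[\iota M^T B_\mu^\bullet]$, rewritten as $H_\lambda[M\iota B_\mu^\bullet]$ using $\iota M^T = M\iota$, yielding the ``in particular'' statement. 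The principal technical point is this residue extraction at $k = 0$, whose justification amounts to the observation that both sides of \eqref{ShiftMK} have the same simple pole at $z=1$ arising from the box $(0,0)$ common to every non-empty partition; the $k \neq 0$ case is a routine specialization.
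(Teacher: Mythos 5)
Your overall strategy is correct and is the natural way to derive this corollary from \eqref{ShiftMK}: verify the componentwise identity $MB_\mu^\bullet = D_\mu^\bullet + \epsilon_0$, substitute, and set $z=1$, dividing out the common $(1-z)$ factor in both denominators when $k=0$ (the only box of color $0$ with $\chi_\square = 1$ being $(0,0)$). Your componentwise check of $MB_\mu^\bullet = D_\mu^\bullet + \epsilon_0$ is correct, as is the observation $\iota\sigma^k\epsilon_0 = \epsilon_k$.

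However, there is a genuine gap in the step where you claim the resulting expression ``matches the $M^T$ form used in the statement.'' Your substitution yields
\[
\epsilon_0 + z\,\iota\sigma^k D_\mu^\bullet = \epsilon_0 - z\,\epsilon_k + z\,\iota\sigma^k M B_\mu^\bullet,
\]
with $M$, not $M^T$. Using $\iota M = M^T\iota$ and the fact that $\sigma^k$ commutes with $M$, one has $\iota\sigma^k M B_\mu^\bullet = M^T\iota\sigma^k B_\mu^\bullet$; by contrast the corollary's expression is $\iota\sigma^k M^T B_\mu^\bullet = M\iota\sigma^k B_\mu^\bullet$. These differ: the operator $M$ applied to the color-$p$ component subtracts $q$ times the $(p{+}1)$-component and $t$ times the $(p{-}1)$-component, while $M^T$ swaps the roles of $q$ and $t$. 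A concrete check with $r=3$ and $\mu=(1)$ gives $\epsilon_p(\epsilon_0 + \iota D_\mu^\bullet) = (1+qt,\,-q,\,-t)$ for $p=0,1,2$, which equals $M^T\iota B_\mu^\bullet$, not $M\iota B_\mu^\bullet$. So the identity $\iota M = M^T\iota$ does \emph{not} bridge your derived form to the stated one; invoking it merely re-expresses $\iota\sigma^k MB_\mu^\bullet$ as $M^T\iota\sigma^k B_\mu^\bullet$, which is still off by a transpose from what the corollary prints. You should either point out that the corollary as stated appears to contain an $M\leftrightarrow M^T$ typo (the correct form being $\iota\sigma^k M B_\mu^\bullet$, and for the $k=0$ case $M^T\iota B_\mu^\bullet$), or locate a genuine justification — but simply asserting the match does not close the argument.

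One further small remark: for $k=0$, rather than invoking residues, it is cleaner to observe that multiplying out the denominators in \eqref{ShiftMK} gives a polynomial identity in $z$; each side carries exactly one factor $(1-z)$ from the box $(0,0)$, and dividing both sides by $(1-z)$ before setting $z=1$ gives the $\Pi^{(0)}$-normalized identity directly, without discussing poles of rational functions. This is what your residue computation amounts to, but stating it this way avoids any worry about whether the numerator itself vanishes at $z=1$.
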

\subsection{For empty cores}
If $\lambda$ has an empty core, then Theorem \ref{wreathMKReciprocity}
gives some interesting identities that are more visibly similar to the identities for ordinary modified Macdonald polynomials.
\begin{prop}
    If $\core(\lambda)= \varnothing$, then
    \[
H_\lambda\left[ \epsilon_0 - z \epsilon_{k} \right] = \prod_{ \substack{ \square \in \lambda \\ \bar{c}_\square = k}} 1- z \chi_\square.
    \]
\end{prop}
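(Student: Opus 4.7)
The plan is to specialize the shifted Wreath Macdonald--Koornwinder reciprocity \eqref{ShiftMK} to the case $\mu = \varnothing$. Since $\core(\varnothing) = \varnothing$ and the empty core is fixed by every element of $\Sigma_r$, the hypothesis $w_0\sigma^{-k}\core(\mu) = \core(\lambda)$ reduces to $\core(\lambda) = \varnothing$, which is exactly what is assumed. So \eqref{ShiftMK} applies.

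Next I would unwind the arguments on both sides. Since $B_\varnothing = 0$, the formula $D_\lambda^{(i)} = -tB_\lambda^{(i-1)} + (1+qt)B_\lambda^{(i)} - qB_\lambda^{(i+1)} - \delta_{i,0}$ immediately gives $D_\varnothing^{(i)} = -\delta_{i,0}$, so $D_\varnothing^\bullet = -\epsilon_0$. Applying the rules $\sigma\epsilon_i = \epsilon_{i-1}$ and $\iota\epsilon_i = \epsilon_{-i}$ yields $\iota\sigma^k D_\varnothing^\bullet = -\epsilon_k$, so the argument on the left side of \eqref{ShiftMK} becomes
\[
\epsilon_0 + z\iota\sigma^k D_\varnothing^\bullet = \epsilon_0 - z\epsilon_k,
\]
matching precisely the input appearing in the claim.

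On the right side of \eqref{ShiftMK}, the denominator $\prod_{\square \in \varnothing, \bar c_\square = k}(1 - z\chi_\square)$ is an empty product equal to $1$. The numerator is $H_\varnothing[\epsilon_0 + z\iota\sigma^k D_\lambda^\bullet]$; but $\varnothing$ is the unique partition with $\core(\varnothing) = \varnothing$ and $|\varnothing| = 0$, so the triangularity and normalization in Definition \ref{ModDef} force $H_\varnothing = 1$ as a multisymmetric function. Hence the numerator is $1$ regardless of its plethystic input, and \eqref{ShiftMK} reduces to
\[
\frac{H_\lambda[\epsilon_0 - z\epsilon_k]}{\prod_{\substack{\square \in \lambda \\ \bar c_\square = k}}(1 - z\chi_\square)} = 1,
\]
which is exactly the desired identity. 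The only real obstacle is the bookkeeping with $\iota$, $\sigma$, and the $\epsilon$-coordinates; once that is handled, this is a direct specialization of reciprocity.
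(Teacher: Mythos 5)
Your proof is correct and takes exactly the approach the paper intends: the proposition appears in the subsection ``For empty cores'' immediately after the statement that Theorem \ref{wreathMKReciprocity} yields such identities, and the paper leaves the verification implicit. Your specialization of \eqref{ShiftMK} to $\mu=\varnothing$, the computation $D_\varnothing^\bullet=-\epsilon_0$ hence $\iota\sigma^k D_\varnothing^\bullet=-\epsilon_k$, and the observations that $H_\varnothing=1$ and the empty product equals $1$, are all accurate and constitute precisely the omitted argument.
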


\begin{cor}\label{h_e_inner_product}
    For any color $k$ and for $\lambda \vdash n $ with $\core(\lambda) = \varnothing$, 
\[
\langle H_\lambda, h_{n-j}[X^{(0)}] e_j[X^{(k)}] \rangle
= e_j\left[ B^{(k)}_\lambda \right].
\]
This means that for any homogeneous $F \in \Lambda^{\otimes r}$ of degree $n$ and any $r$-core $\alpha$, 
\begin{align*}
\left \langle e_j[\epsilon_k B ]~ F \otimes e^\alpha , h_n[X^{(0)}] \otimes e^{\alpha}  \right \rangle
= \left \langle F  , h_{n-j}[X^{(0)}]e_j[ X^{(k)} ]  \right \rangle.
\end{align*}
\end{cor}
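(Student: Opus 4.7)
The plan is to realize both sides of the first identity as coefficients of $(-z)^j$ in a single generating function, namely the specialization $H_\lambda[\epsilon_0 - z\epsilon_k]$, which is governed by the proposition immediately above. The key tool is the reproducing kernel for the multisymmetric Hall pairing: for any $F \in \Lambda^{\otimes r}$,
\[
F[Y^\bullet] = \left\langle F[X^\bullet],\ \Omega[X^{(0)}Y^{(0)}]\cdots\Omega[X^{(r-1)}Y^{(r-1)}]\right\rangle.
\]
Setting $Y^\bullet = \epsilon_0 - z\epsilon_k$ kills every factor except those indexed by $0$ and $k$ and collapses the kernel to $\Omega[X^{(0)}]\,\Omega[-zX^{(k)}]$. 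The classical expansions $\Omega[X^{(0)}] = \sum_m h_m[X^{(0)}]$ and $\Omega[-zX^{(k)}] = \sum_j (-z)^j e_j[X^{(k)}]$ then give
\[
H_\lambda[\epsilon_0 - z\epsilon_k] = \sum_{m,j\geq 0} (-z)^j\, \bigl\langle H_\lambda,\ h_m[X^{(0)}]\,e_j[X^{(k)}]\bigr\rangle.
\]

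Next, I would use that $H_\lambda$ is homogeneous of total multi-degree $n$, so that only $m+j = n$ contributes, reducing the right-hand side to $\sum_{j=0}^n (-z)^j \langle H_\lambda,\, h_{n-j}[X^{(0)}]\,e_j[X^{(k)}]\rangle$. Under the hypothesis $\core(\lambda) = \varnothing$, the preceding proposition identifies the left-hand side with $\prod_{\bar{c}_\square = k}(1-z\chi_\square) = \sum_j (-z)^j\, e_j[B_\lambda^{(k)}]$. Comparing coefficients of $(-z)^j$ yields the first identity.

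For the second identity, I would expand $F = \sum_\lambda c_\lambda H_\lambda$ in the wreath Macdonald basis indexed by $\lambda$ with $\core(\lambda) = \alpha$ and total degree $n$. Since $e_j[\epsilon_k B]$ acts as multiplication by the scalar $e_j[B_\lambda^{(k)}]$ on $H_\lambda \otimes e^\alpha$, the left-hand side unfolds as $\sum_\lambda c_\lambda\, e_j[B_\lambda^{(k)}]\,\langle H_\lambda,\, h_n[X^{(0)}]\rangle$, while the right-hand side unfolds as $\sum_\lambda c_\lambda\,\langle H_\lambda,\, h_{n-j}[X^{(0)}]\,e_j[X^{(k)}]\rangle$. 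Termwise application of the first identity, together with the $j=0$ case $\langle H_\lambda, h_n[X^{(0)}]\rangle = 1$ (also directly visible from $H_\lambda[\epsilon_0] = 1$), equates the two sums.

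The main subtlety is the degree-matching step in the reproducing kernel argument: one must verify that $\langle H_\lambda,\, h_m[X^{(0)}]\,e_j[X^{(k)}]\rangle$ vanishes unless $m+j$ equals the total multi-degree of $H_\lambda$, so that the double sum genuinely collapses to a polynomial in $z$ whose coefficients are the desired inner products. Once this observation is in hand, the proof reduces to bookkeeping around the specialization $Y^\bullet = \epsilon_0 - z\epsilon_k$ together with the proposition preceding the corollary.
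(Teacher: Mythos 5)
Your derivation of the first identity is essentially the paper's own proof in slightly different clothing: the paper expands $H_\lambda[\epsilon_0 - z\epsilon_k]$ as $\sum_{i+j=n} h_i^\perp[X^{(0)}](-z)^j e_j^\perp[X^{(k)}]H_\lambda$, which is exactly what you obtain from the reproducing kernel $\Omega[X^\bullet Y^\bullet]$ at $Y^\bullet = \epsilon_0 - z\epsilon_k$; both then invoke the preceding proposition for the product form and match coefficients of $(-z)^j$. So for the first displayed identity you are on target, and the degree-matching step you flag is immediate since the Hall pairing is degree-homogeneous.

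For the second identity, your termwise expansion in the basis $\{H_\lambda : \core(\lambda) = \alpha\}$ is the natural move, and the paper itself offers no further argument. But notice the logical gap: the termwise reduction requires $e_j[B_\lambda^{(k)}] = \langle H_\lambda, h_{n-j}[X^{(0)}] e_j[X^{(k)}]\rangle$ for every $\lambda$ with $\core(\lambda) = \alpha$, whereas the first identity (and the proposition it rests on) is only established under $\core(\lambda) = \varnothing$. Thus your argument as written proves the second statement only for $\alpha = \varnothing$; extending it to arbitrary $\alpha$, as the corollary's wording suggests, would require a version of the evaluation $H_\lambda[\epsilon_0 - z\epsilon_k]$ valid for nonempty core, which is not supplied by the preceding proposition (the reciprocity formula there gives $H_\lambda[\epsilon_0 + z\iota\sigma^k D_\alpha^\bullet]$, not $H_\lambda[\epsilon_0 - z\epsilon_k]$, when $\alpha \neq \varnothing$). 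This is a gap shared with the paper's unstated derivation, but you should flag it rather than assert a termwise application of a statement that was only proved for empty core.
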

\begin{proof}
    We have that 
    \begin{align*}
    \prod_{ \substack{ \square \in \lambda \\ \bar{c}_\square = k}} 1- z \chi_\square
    & = 
H_\lambda[ \epsilon_0 - z \epsilon_{k}]
\\& = \sum_{i+j= n} h_i[X^{(0)}]^{\perp}  (-z)^j e_{j}[X^{(k)}]^{\perp} H_{\lambda} 
\\
& = \left\langle H_\lambda, \sum_{j=0}^n h_{n-j}[X^{(0)}] (-z)^j e_{j}[X^{(k)}] \right\rangle .
    \end{align*} 
    Then take the coefficient of $(-z)^j$ on both sides.
\end{proof}

When $j = n$ and $k = 0$ in Corollary \ref{h_e_inner_product}, we get the following evaluation:
\begin{cor}\label{cor:emptycore_evaluation} For $\lambda$ with $\core(\lambda) = \varnothing$, 
\[
H_\lambda[-\epsilon_0] = T^{(0)}_\lambda.
\]
\end{cor}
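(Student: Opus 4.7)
The plan is to read off $H_\lambda[-\epsilon_0]$ directly from the multisymmetric Schur expansion of $H_\lambda$ after specializing Corollary \ref{h_e_inner_product} to $j=n$ and $k=0$. Under the empty-core hypothesis, the color-zero alphabet $B_\lambda^{(0)}$ has exactly $n$ terms, so the top elementary symmetric function $e_n[B_\lambda^{(0)}]$ collapses to the product of all color-zero characters $\chi_\square$, which is precisely the ingredient appearing in $T^{(0)}_\lambda$.

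The key observation is that only a single term of the Schur expansion $H_\lambda = \sum_{\vec\mu} c_{\vec\mu}\, s_{\vec\mu}[X^\bullet]$ survives the specialization $X^{(0)}=-1$, $X^{(i)}=0$ for $i\ne 0$. Indeed, the factor $s_{\mu^i}[0]=\delta_{\mu^i,\varnothing}$ forces $\mu^i=\varnothing$ for $i\ne 0$, and the single-variable evaluation $s_{\mu^0}[-1]$ vanishes unless $\mu^0$ is the single-column partition $(1^n)$, in which case $s_{(1^n)}[-1]=(-1)^n$. Since $s_{((1^n),\varnothing,\ldots,\varnothing)}=e_n[X^{(0)}]$, this gives
\[
H_\lambda[-\epsilon_0] = (-1)^n\, \langle H_\lambda,\, e_n[X^{(0)}]\rangle.
\]

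Now Corollary \ref{h_e_inner_product} applied with $j=n$ and $k=0$ (using $h_0=1$) yields $\langle H_\lambda, e_n[X^{(0)}]\rangle = e_n[B_\lambda^{(0)}]$, and because $\core(\lambda)=\varnothing$, Proposition \ref{CoreQuotDec} ensures that $B_\lambda^{(0)}$ has exactly $n$ terms, so $e_n[B_\lambda^{(0)}]=\prod_{\bar c_\square=0}\chi_\square$. Absorbing $(-1)^n$ into the product then yields $H_\lambda[-\epsilon_0] = \prod_{\bar c_\square=0}(-\chi_\square) = T^{(0)}_\lambda$. The proof is essentially a clean specialization of the preceding corollary; the only small point to verify is that the $n$ in Corollary \ref{h_e_inner_product} agrees with $d_0(\lambda)=|\quot(\lambda)|$ under the empty-core hypothesis, which is immediate.
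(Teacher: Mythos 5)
Your proof is correct and follows essentially the same route as the paper: both reduce the evaluation to the single inner product $\langle H_\lambda, e_n[X^{(0)}]\rangle$ and then invoke Corollary \ref{h_e_inner_product} with $j=n$, $k=0$ together with the fact that the empty-core hypothesis makes $e_n[B_\lambda^{(0)}]$ the full product of color-zero characters. The only difference is that you make the step $H_\lambda[-\epsilon_0]=(-1)^n\langle H_\lambda, e_n[X^{(0)}]\rangle$ explicit via the multi-Schur expansion, whereas the paper leaves it tacit.
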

It is generally unclear at this point how to compute $H_\lambda[-\epsilon_0]$ for general $\lambda$. 
However, one of the consequences of the formulas for Pieri coefficients is giving a tableau formula for this evaluation.
In \cite{OSWreath}, the authors present an unpublished result of Haiman that describes $H_\lambda[-\epsilon_i]$.

\section{Five-term relations and tableau formulas for Pieri coefficients}\label{5TermSec}
To lighten the notation, we will let 
\begin{align*}
\Delta_v^{(s)} =  \Omega\left[-v \epsilon_s \frac{D}{M}  \right],
&& \text{with inverse} && \Delta_{-v}^{(s)} =  \Omega\left[v \epsilon_s \frac{D}{M} \right].
\end{align*}
We see from \eqref{epsilonDM} that 
\[
\Delta_v^{(s)} H_\lambda = \Omega\left[ - v \left( \frac{D_\lambda}{M_0} \right)^{(s)} \right] H_\lambda.
\]
We wish to study the action of the operator
\[W(u,v) = \sum_{i,j} u^i v^j W_{i,j} = \PP^{(p)}_{u/{M^T}}
\Delta_v^{(s)} \PP^{(p)}_{-u/{M^T}} \Delta_{-v}^{(s)}.
\]

Such expressions for modified Macdonald polynomials were studied by Garsia and Mellit \cite{Garsia-Mellit}, leading to a five-term relation with incredibly powerful consequences. We will also find that for the wreath Macdonald polynomials there is a family of ``five-term relations,'' depending on two colors and whether we act on the wreath Macdonald polynomials or their dagger version.
This will lead to explicit formulas for Pieri and dual Pieri coefficients.
However, since wreath Macdonald polynomials are not self-dual, we will need some care in approaching the dual Pieri case.

In this section, we first look at Pieri formulas corresponding to multiplication by a modified elementary symmetric function. In the subsequent section, we will look at the dual Pieri formulas corresponding to skewing with complete homogeneous symmetric functions.
\\

One of the main ingredients we require is the following condition on Pieri coefficients. 
\begin{prop}[\protect{\cite[Lemma 4.3]{RW}}] 
Recall that we write $\lambda \subset_k \mu$ if $\mu$ is attained from $\lambda$ by adding $k$ cells of each color, and we therefore also have that
$\core(\lambda) = \core(\mu)$. 
For any $k$ and partition $\lambda$,
\[
{e}_k\left[ \epsilon_p \frac{X^{\bullet}}{M^T}\right] H_\lambda = \sum_{\lambda \subset_k \mu} d_{\mu,\lambda}^{(p)} H_\mu.
\]
 Similarly, the adjoint of this is given by 
\[
h_k^{\perp}\left[ X^{(p)}\right] H_\lambda^\dagger = \sum_{\mu \subset_k \lambda} c_{\mu,\lambda}^{\dagger (p)} H_\mu^\dagger.
\]
\end{prop}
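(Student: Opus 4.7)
My plan is to show that the Pieri expansion $e_k[\epsilon_p X^\bullet/M^T] H_\lambda = \sum_\mu d_{\mu,\lambda}^{(p)} H_\mu$, taken in the basis $\{H_\mu : \core(\mu) = \core(\lambda)\}$ of $\Lambda^{\otimes r}$ (which exists by the Cauchy kernel identity), has support only on $\mu$ satisfying $\lambda \subset_k \mu$. This condition decomposes as: $\core(\mu) = \core(\lambda)$ (automatic from the basis choice), $|\mu| = |\lambda| + rk$, and $\mu \supseteq \lambda$. The size condition follows by degree counting, since $e_k[\epsilon_p X^\bullet/M^T]$ is homogeneous of total degree $k$ in the multisymmetric grading (where $p_n[X^{(j)}]$ has degree $n$) and $H_\mu$ has degree $|\quot(\mu)|$; hence $|\quot(\mu)| = |\quot(\lambda)| + k$, giving $|\mu| = |\lambda| + rk$. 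Once containment is granted, the equal color count $d_j(\mu/\lambda) = k$ follows from each ribbon contributing one cell of each color, using Proposition \ref{CoreQuotDec}.

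For the containment $\mu \supseteq \lambda$, my approach is to use the two triangularity conditions from Definition \ref{ModDef}. Substituting $X^\bullet \mapsto (1-q\sigma^{-1})X^\bullet$ in the Pieri identity transforms the multiplier into
\[
e_k\left[\sum_{b \geq 0} t^b X^{(p+b)}\right],
\]
which expands as a $\QQ(q,t)$-combination of products $\prod_j e_{k_j}[X^{(j)}]$ with $\sum_j k_j = k$. Combined with $H_\lambda[(1-q\sigma^{-1})X^\bullet] \in \Span{s_{\quot(\mu')} : \mu' \geq_r \lambda}$ and the classical Pieri rule in each $X^{(j)}$ (vertical strips of size $k_j$ on the $j$-th quotient component), the LHS lies in a specific span of $s_{\quot(\nu)}$. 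A parallel analysis with $X^\bullet \mapsto (1-t^{-1}\sigma^{-1})X^\bullet$ converts the multiplier to $(-t)^{-k} h_k\left[\sum_{a \geq 0} q^a X^{(p-a-1)}\right]$ and, via the lower triangularity $H_\lambda[(1-t^{-1}\sigma^{-1})X^\bullet] \in \Span{s_{\quot(\mu'')} : \mu'' \leq_r \lambda}$ with horizontal-strip Pieri, yields a complementary constraint. The triangular expansions of each RHS summand $H_\mu$ under the two substitutions then restrict which $\mu$ can have nonzero coefficient.

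The hardest step will be the combinatorial bridge: extracting the diagram-level statement $\mu \supseteq \lambda$ from the two Schur-level constraints, which concern additions of vertical and horizontal strips to quotient components for partitions $\mu'$ and $\mu''$ in prescribed dominance ranges around $\lambda$. One must carefully translate quotient-component additions through the Young--Maya correspondence back to cell additions on the original partition, and understand how the partial orders $\geq_r$ and $\leq_r$ on partitions with fixed core interact with diagram containment. This is precisely where the interplay between the quotient-level classical Pieri rules and the full wreath structure becomes delicate, and is the technical core of the argument.
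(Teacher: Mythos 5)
This proposition is cited from \cite[Lemma 4.3]{RW}; the present paper supplies no proof of it, so there is nothing in the text to compare your argument against directly. Judged on its own terms, your framework is sound as far as it goes: the reduction to three conditions (same core, total size $|\lambda|+rk$, and containment $\lambda\subseteq\mu$) is the right decomposition, the basis argument via the wreath Cauchy kernel correctly forces $\core(\mu)=\core(\lambda)$, and the degree count giving $|\quot(\mu)|=|\quot(\lambda)|+k$, hence $|\mu|=|\lambda|+rk$, is correct. The two plethystic identifications are also computed correctly: since $M^T=(1-q\sigma^{-1})(1-t\sigma)$, substituting $X^\bullet\mapsto(1-q\sigma^{-1})X^\bullet$ turns the Pieri multiplier into $e_k\bigl[\sum_{b\ge0}t^bX^{(p+b)}\bigr]$, and substituting $X^\bullet\mapsto(1-t^{-1}\sigma^{-1})X^\bullet$ turns it into $(-t)^{-k}h_k\bigl[\sum_{a\ge0}q^aX^{(p-a-1)}\bigr]$, which after expanding as sums of products over the $X^{(j)}$ invoke the vertical- and horizontal-strip Pieri rules on each quotient component.

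However, the proposal is genuinely incomplete: the step you label ``the technical core'' is not a loose end but is the entire content of the proposition. You correctly observe that the two substitutions produce two sets of support constraints at the Schur level, and that the unitriangularity of $H_\mu$ under each substitution should then constrain which $\mu$ survive. But you do not carry out the argument that the intersection of the resulting constraints forces $\lambda\subseteq\mu$ (from which, together with the degree and core constraints already established, the equal-color-count condition does follow, since same core plus containment gives componentwise containment of quotients, and each box added to a quotient piece is an $r$-ribbon on $\lambda$). That bridge demands control over how strip additions on $\quot(\lambda)$ interact with the partial orders $\ge_r$ and $\le_r$ and with Young--Maya translation, and this is precisely what a proof must supply. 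As written, this is a plan for a proof rather than a proof. Two smaller gaps: (i) you have not verified that the two Schur-support conditions together actually exclude all $\mu$ with the right core and size but $\mu\not\supseteq\lambda$, which is not automatic; (ii) the second identity, for $h_k^\perp[X^{(p)}]$ acting on the dagger basis, is not addressed at all --- it follows from the first by taking $\langle-,-\rangle_\ast$-adjoints (or by applying the involution $\downarrow$), but that derivation should be written out since the dagger basis is not self-dual.
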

Our goal in this section is to give a formula for these coefficients.
\subsection{Triangularities in $u$ and $v$}
The main goal is to show that $W(u,v) = \sum_{i} u^{i} v^i W_{i,i},$ then identify the diagonal terms. We do this in a similar way to the proof in \cite{Garsia-Mellit}: we first show that $W_{i,j} = 0$ for $j>i$, identify $W_{i,i}$, then show $W_{i,j} = 0$ for $i>j$.
\begin{prop}
The operator
\[ \Delta_v^{(s)} \underline{e}_k\left[ \epsilon_p \frac{X^{\bullet}}{M^T}\right] \Delta_{-v}^{(s)}   \]
is a polynomial in $v$ of degree $k$. Furthermore, the top degree in $v$ is given by
\begin{align*} \Delta_v^{(s)} \underline{e}_k\left[ \epsilon_p \frac{X^{\bullet}}{M^T}\right] 
\Delta_{-v}^{(s)}
\Big|_{ v^k} 
= \nabla^{(s)} \underline{e}_k\left[ \epsilon_p \frac{X^{\bullet}}{M^T}\right] {\nabla^{(s)}}^{-1}.
\end{align*}
In particular, since $\PP^{(p)}_{u/M^T}$ only raises the degree in $u$, this means that $W_{i,j} = 0$ for $j>i$; and for $i=j$, we have
\[W_{i,i} = \nabla^{(s)} e_{i}\left[ \epsilon_p \frac{X^{\bullet}}{M^T}\right] {\nabla^{(s)}}^{-1}.\]
\end{prop}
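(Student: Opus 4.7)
The plan is to test both operators on the wreath Macdonald basis $\{H_\lambda\}$ and reduce everything to the Pieri formula and the scalar action of $\Delta_{\pm v}^{(s)}$.

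First I would record the two ingredients. The already-noted formula $\Delta_v^{(s)}H_\lambda=\Omega\!\left[-v\left(D_\lambda/M_0\right)^{(s)}\right]H_\lambda$ makes $\Delta_v^{(s)}$ and $\Delta_{-v}^{(s)}$ act diagonally, and the Pieri formula gives
\[
\underline{e}_k\!\left[\epsilon_p\frac{X^{\bullet}}{M^T}\right]H_\lambda=\sum_{\lambda\subset_k\mu} d^{(p)}_{\mu,\lambda}\,H_\mu .
\]
Combining these, $\Delta_v^{(s)}\underline{e}_k[\epsilon_pX^\bullet/M^T]\Delta_{-v}^{(s)}H_\lambda$ equals
\[
\sum_{\lambda\subset_k\mu} d^{(p)}_{\mu,\lambda}\,\frac{\Omega\!\left[-v(D_\mu/M_0)^{(s)}\right]}{\Omega\!\left[-v(D_\lambda/M_0)^{(s)}\right]}H_\mu .
\]
The key simplification is $D_\mu-D_\lambda=M_0 B_{\mu/\lambda}$, so $(D_\mu-D_\lambda)/M_0=B_{\mu/\lambda}$ and the ratio becomes $\Omega[-v B_{\mu/\lambda}^{(s)}]=\prod_{\square\in\mu/\lambda,\ \bar c_\square=s}(1-v\chi_\square)$. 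Because $\lambda\subset_k\mu$ means $\mu/\lambda$ has exactly $k$ boxes of every color (in particular of color $s$), this product is a polynomial in $v$ of degree exactly $k$.

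Next, I read off the top coefficient: $[v^k]\prod_{\square\in\mu/\lambda,\ \bar c_\square=s}(1-v\chi_\square)=\prod_{\square\in\mu/\lambda,\ \bar c_\square=s}(-\chi_\square)$. By the definition of $T^{(s)}_\lambda$ and because $\core(\mu)=\core(\lambda)$ (Proposition \ref{CoreQuotDec}(4)), this equals $T^{(s)}_\mu/T^{(s)}_\lambda$. Hence
\[
[v^k]\,\Delta_v^{(s)}\underline{e}_k\!\left[\epsilon_p\frac{X^{\bullet}}{M^T}\right]\Delta_{-v}^{(s)}H_\lambda=\sum_{\lambda\subset_k\mu} d^{(p)}_{\mu,\lambda}\,\frac{T^{(s)}_\mu}{T^{(s)}_\lambda}H_\mu=\nabla^{(s)}\underline{e}_k\!\left[\epsilon_p\frac{X^{\bullet}}{M^T}\right]{\nabla^{(s)}}^{-1}H_\lambda ,
\]
since $\nabla^{(s)}$ acts by $T^{(s)}_\mu$ on $H_\mu$. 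This proves both assertions about $\Delta_v^{(s)}\underline{e}_k[\cdots]\Delta_{-v}^{(s)}$.

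For the consequences on $W(u,v)$, I would expand $\PP^{(p)}_{-u/M^T}=\sum_{n\ge 0}(-u)^n\underline{e}_n[\epsilon_pX^\bullet/M^T]$, so that
\[
W(u,v)=\PP^{(p)}_{u/M^T}\sum_{n\ge 0}(-u)^n\,\Delta_v^{(s)}\underline{e}_n\!\left[\epsilon_p\frac{X^{\bullet}}{M^T}\right]\Delta_{-v}^{(s)} .
\]
By the polynomial statement just proved, each summand contributes in bidegree $(u^n, v^{\le n})$, and multiplying by $\PP^{(p)}_{u/M^T}=\sum_{m\ge 0}u^m\underline{h}_m[\epsilon_p X^\bullet/M^T]$ only increases the $u$-degree. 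Therefore every monomial $u^iv^j$ appearing in $W(u,v)$ satisfies $j\le i$, giving $W_{i,j}=0$ for $j>i$. For $i=j$ the constraint $j\le n\le i=j$ forces $n=i$, $m=0$, i.e.\ only the top $v$-coefficient of $\Delta_v^{(s)}\underline{e}_i\Delta_{-v}^{(s)}$ with the constant term of $\PP^{(p)}_{u/M^T}$, which identifies $W_{i,i}$ with $\nabla^{(s)}\underline{e}_i[\epsilon_pX^\bullet/M^T]{\nabla^{(s)}}^{-1}$ as claimed.

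The main obstacle is really just the one-line identity $D_\mu-D_\lambda=M_0 B_{\mu/\lambda}$ combined with the core-preserving nature of $\subset_k$, which is what causes the conjugation by $\Delta_v^{(s)}$ to collapse to a finite product whose top term is precisely a ratio of $T^{(s)}$ values. Once this cancellation is in hand, the degree bookkeeping on $W(u,v)$ is entirely formal.
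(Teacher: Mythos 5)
Your proof is correct and follows essentially the same route as the paper: both act on the basis $\{H_\lambda\}$, use the Pieri rule together with the diagonal action of $\Delta_{\pm v}^{(s)}$ to collapse the conjugation to $\prod_{\square\in\mu/\lambda,\,\bar c_\square=s}(1-v\chi_\square)$, and then read off the top coefficient as $T^{(s)}_\mu/T^{(s)}_\lambda$. The only cosmetic differences are that you make the intermediate identity $D_\mu - D_\lambda = M_0 B_{\mu/\lambda}$ explicit and spell out the $u$-degree bookkeeping via the expansions of $\mathcal{P}^{(p)}_{\pm u/M^T}$, whereas the paper handles these steps more tersely.
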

\begin{proof} 
From the Pieri coefficients, we see that when $\core(\lambda) = \alpha$,
\begin{align*}
   \Delta_v^{(s)}
\underline{e}_k\left[ \epsilon_p \frac{X^{\bullet}}{M^T}\right] & \Delta_{-v}^{(s)}
\left( H_\lambda \otimes e^{\alpha} \right) \\
&
=  \Delta_v^{(s)} \underline{e}_k\left[ \epsilon_p \frac{X^{\bullet}}{M^T}\right]
\Omega\left[v \epsilon_s \frac{D_\lambda^{\bullet}}{M} \right]
\left(H_\lambda \otimes e^{\alpha} \right) \\
& =  \Omega \left[v \left(\frac{D_\lambda}{M_0}\right)^{(s)} \right] \Delta_v^{(s)}
 \left(\sum_{\core(\mu) = \alpha} d_{\mu,\lambda}^{(p)} H_\mu \otimes e^{\alpha} \right) \\
 & = \sum_{\core(\mu) = \alpha} d_{\mu,\lambda}^{(p)}  \Omega\left[v \left(\frac{D_\lambda}{M_0}\right)^{(s)} \right]  \Omega\left[-v \left( \frac{D_\mu}{M_0}\right)^{(s)}\right]  
H_\mu \otimes e^{\alpha}  \\
& = \sum_{\core(\mu) = \alpha} d_{\mu,\lambda}^{(p)}  \Omega\left[-v \left( \frac{D_\mu - D_\lambda}{M_0} \right)^{(s)}\right] 
H_\mu \otimes e^{\alpha} \\
& = 
\sum_{\core(\mu) = \alpha} d_{\mu,\lambda}^{(p)} 
\left(\prod_{{ \substack{ {\square \in \mu/ \lambda} \\ {\bar{c}(\square) = s} }}} 1-v\chi_{\square}  \right)H_\mu \otimes e^{\alpha}.
\end{align*}

Since $\mu$ is attained from $\lambda$ by adding $k$ cells of each color, this is a polynomial in $v$ of degree $k$.
Note that we get $W$ by then applying $\PP^{(p)}_{u/M^T}$ on the left, which only raises the power of $u$. Therefore, the power of $v$ is always at most the degree of $u$, meaning $W_{i,j} = 0$ for $j>i$.

For the other statement, we note that for the coefficient of $v^k$ we have 
\begin{align*}
\sum_{\core(\mu) = \alpha} d_{\mu,\lambda}^{(p)} 
\left(\prod_{{ \substack{ {\square \in \mu/ \lambda} \\ {\bar{c}(\square) = s} }}} 1-v\chi_{\square}\right) H_\mu \otimes e^{\alpha} \Big|_{v^k} 
& = 
\sum_{\core(\mu) = \alpha} d_{\mu,\lambda}^{(p)} \left(
\prod_{{ \substack{ {\square \in \mu/ \lambda} \\ {\bar{c}(\square) = s} }} } -\chi_{\square}\right) H_\mu \otimes e^{\alpha}  \\ 
& = 
\sum_{\core(\mu) = \alpha} d_{\mu,\lambda}^{(p)}
\left(T_\mu^{(s)}/T_{\lambda}^{(s)} \right)
 H_\mu \otimes e^{\alpha}  \\
&=
\nabla^{(s)} \underline{e}_k\left[ \epsilon_p \frac{X^{\bullet}}{M^T}\right] {\nabla^{(s)}}^{-1}
H_\lambda \otimes e^{\alpha} .\qedhere
\end{align*}
\end{proof}

\begin{prop}
The operator
\[
\PP^{(p)}_{u/M^T}
 \underline{e}_k \left[ \epsilon_s  \frac{D}{M} \right] \PP^{(p)}_{-u/M^T}
\]
is a polynomial in $u$ of degree $k$. In particular, this means that 
$W_{i,j} = 0$ for $j<i$. 
\end{prop}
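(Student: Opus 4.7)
The plan is to conjugate by the map $\mathsf{V}$ from Proposition~\ref{MainPropertiesofV}, which transforms both $\PP^{(p)}_{\pm u/M^T}$ and $e_k[\epsilon_s D/M]$ into operators whose action on the $H^\dagger$-basis can be computed by the dual Pieri rule. Specifically, from Proposition~\ref{MainPropertiesofV} one has $\PP^{(p)}_{\pm u/M^T} = \mathsf{V}^{-1} \tilde{\Delta}^{(p)}_{\mp u} \mathsf{V}$, where $\tilde{\Delta}^{(p)}_u \coloneqq \Omega[-u\epsilon_p \iota D^\dagger/M^T]$ is diagonal on $H^\dagger_\lambda$, and $\Delta_v^{(s)} = \mathsf{V}^{-1} \TT^{(s)}_v \mathsf{V}$. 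Extracting the $v^k$ coefficient of the standard Taylor-type identity $G[X^\bullet + v\epsilon_s] = \sum_n v^n h_n^\perp[X^{(s)}] G$ yields $[\TT^{(s)}_v]|_{v^k} = h_k^\perp[X^{(s)}]$, and hence $e_k[\epsilon_s D/M] = (-1)^k \mathsf{V}^{-1} h_k^\perp[X^{(s)}] \mathsf{V}$. Combining these identities,
\[
\PP^{(p)}_{u/M^T}\, e_k[\epsilon_s D/M]\, \PP^{(p)}_{-u/M^T} = (-1)^k\, \mathsf{V}^{-1}\, \tilde{\Delta}^{(p)}_{-u}\, h_k^\perp[X^{(s)}]\, \tilde{\Delta}^{(p)}_u\, \mathsf{V}.
\]

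The next step is to act the inner operator $\tilde{\Delta}^{(p)}_{-u} h_k^\perp[X^{(s)}] \tilde{\Delta}^{(p)}_u$ on $H^\dagger_\lambda$. The diagonal $\tilde{\Delta}^{(p)}_u$ contributes the scalar $\Omega[-u\epsilon_p \iota D_\lambda^\bullet/M^T]$; the dual Pieri rule stated earlier (with color $s$) gives $h_k^\perp[X^{(s)}] H^\dagger_\lambda = \sum_{\mu \subset_k \lambda} c^{\dagger (s)}_{\mu, \lambda} H^\dagger_\mu$; and finally $\tilde{\Delta}^{(p)}_{-u}$ multiplies each $H^\dagger_\mu$ by $\Omega[u\epsilon_p \iota D_\mu^\bullet/M^T]$. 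The combined matrix element is $c^{\dagger (s)}_{\mu, \lambda}\, \Omega[u\epsilon_p \iota (D_\mu - D_\lambda)^\bullet/M^T]$. The main obstacle is justifying that this $\Omega$-factor is a polynomial in $u$ of degree $k$. Following the vector-plethysm computation of $\PP^{(i)}_{M^T}$ carried out earlier in the paper, the internal sum $\sum_{j \in \ZZ/r\ZZ} S^{(j)} = 1/M_0$ (where $S^{(j)} = \sum_{a, b \geq 0,\, a - b \equiv j} q^a t^b$) causes the operator $1/M^T$ to collapse to the scalar $1/M_0$, reducing the factor to $\Omega[u (D_\mu - D_\lambda)^{(-p)}/M_0]$. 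The identity $D_\mu - D_\lambda = -M_0 B_{\lambda/\mu}$ then evaluates this to $\prod_{\square \in \lambda/\mu,\, \bar{c}_\square = -p}(1 - u\chi_\square)$, a polynomial of degree exactly $k$ because $\mu \subset_k \lambda$ ensures $\lambda/\mu$ has $k$ cells of each color.

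The implication $W_{i,j} = 0$ for $j < i$ then follows quickly: expanding $\Delta_v^{(s)} = \sum_k (-v)^k e_k[\epsilon_s D/M]$ lets one write $W(u,v) = \sum_k (-v)^k A_k(u) \Delta_{-v}^{(s)}$, where $A_k(u) \coloneqq \PP^{(p)}_{u/M^T} e_k[\epsilon_s D/M] \PP^{(p)}_{-u/M^T}$ has $u$-degree at most $k$ by the above. Since $\Delta_{-v}^{(s)}$ is independent of $u$ and has only non-negative powers of $v$, the coefficient of $u^i v^j$ in $W(u,v)$ can be nonzero only when some $k \geq i$ contributes, forcing $j \geq k \geq i$.
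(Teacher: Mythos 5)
Your proposal is correct and follows essentially the same route as the paper: conjugating by $\mathsf{V}$ to turn $\PP^{(p)}_{\pm u/M^T}$ into diagonal $\Omega$-operators on the $H^\dagger$-basis and $\Delta_v^{(s)}$ into $\TT^{(s)}_v$, then invoking the dual Pieri rule to get polynomiality in $u$; you have merely filled in the matrix-element computation that the paper leaves as ``we once again see that.'' One small notational slip: the scalar should be $\Omega\bigl[u\bigl((D_\mu-D_\lambda)/M_0\bigr)^{(-p)}\bigr]$, not $\Omega\bigl[u(D_\mu-D_\lambda)^{(-p)}/M_0\bigr]$ --- projecting to a color and dividing by $M_0$ do not commute --- but since $D_\mu-D_\lambda=-M_0 B_{\lambda/\mu}$, both reduce to $\Omega\bigl[-uB_{\lambda/\mu}^{(-p)}\bigr]=\prod_{\square\in\lambda/\mu,\,\bar{c}_\square=-p}(1-u\chi_\square)$ and the conclusion stands.
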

\begin{proof}
We see that 
\begin{align*}
\mathsf{V}^{-1} \mathsf{V}\PP^{(p)}_{u/M^T}
\Delta_v^{(s)} \PP^{(p)}_{-u/M^T}
& = 
\mathsf{V}^{-1} \Omega \left[ u \epsilon_p \frac{\iota D^{\dagger }}{M^T} \right]
\TT_{v}^{(s)} \Omega \left[ - u \epsilon_p \frac{\iota D^{\dagger }}{M} \right]
\mathsf{V}.
\end{align*}
From the Pieri rules on the basis $H_\mu^{\dagger} \otimes e^{\alpha}$ (with $\core(w_0\mu)= \alpha$), we once again see that 
\[
\Omega \left[ u \epsilon_p \frac{\iota D^\dagger}{M^T} \right]~
h_k^{\perp}\left[X^{(s)}\right] \Omega \left[ - u \epsilon_p \frac{\iota D^\dagger}{M^T}\right]\]
is a polynomial in $u$ of degree $k$. Therefore, the degree of $u$ in
$
\PP^{(p)}_{u/M^T}
\Delta_v^{(s)}\PP^{(p)}_{-u/M^T}
$
is at most the degree of $v$. Since the operator $ \Omega[v \epsilon_s ( D /M)]$ only increases the degree in $u$, we have that
$W_{i,j} = 0$ for $j<i$.
\end{proof}

\begin{thm} \label{thm:five-term-relation}
We have the following five-term relation for any two colors $p$ and $s$:
\[\nabla^{(s)} \PP_{-uv/M^T}^{(p)} {\nabla^{(s)}}^{-1} = 
\PP^{(p)}_{u/M^T}
\Delta_v^{(s)} \PP^{(p)}_{-u/M^T} \Delta_{-v}^{(s)}.
\]
\end{thm}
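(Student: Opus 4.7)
The plan is to combine the two preceding triangularity propositions with a short generating-function collapse; essentially no additional substantial calculation is required. The strategy mirrors the Garsia--Mellit argument: use the two propositions above to force $W_{i,j}=0$ for $j>i$ and for $j<i$, so that $W(u,v)=\sum_{k\geq 0}u^kv^kW_{k,k}$, pin down the diagonal entries, and repackage their generating series as a plethystic exponential.

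First I would extract the diagonal entries. The first of the two propositions identifies the top-$v$-degree part of $\Delta_v^{(s)}\underline{e}_k[\epsilon_p X^\bullet/M^T]\Delta_{-v}^{(s)}$ as $v^k\,\nabla^{(s)}\underline{e}_k[\epsilon_p X^\bullet/M^T]{\nabla^{(s)}}^{-1}$. Since
\[
\Omega\!\left[-u\,\epsilon_p\tfrac{X^\bullet}{M^T}\right]=\sum_{k\geq 0}(-u)^k\,\underline{e}_k\!\left[\epsilon_p\tfrac{X^\bullet}{M^T}\right],
\]
the coefficient of $u^kv^k$ inside $\Delta_v^{(s)}\PP^{(p)}_{-u/M^T}\Delta_{-v}^{(s)}$ is $(-1)^k\,\nabla^{(s)}\underline{e}_k[\epsilon_p X^\bullet/M^T]{\nabla^{(s)}}^{-1}$. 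The outer factor $\PP^{(p)}_{u/M^T}$ only raises the $u$-degree, so the $u^kv^k$-coefficient of $W(u,v)$ comes solely from the constant-in-$u$ term of $\PP^{(p)}_{u/M^T}$, giving
\[
W_{k,k}=(-1)^k\,\nabla^{(s)}\underline{e}_k\!\left[\epsilon_p\tfrac{X^\bullet}{M^T}\right]{\nabla^{(s)}}^{-1}.
\]

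Summing over $k$, I would pull $\nabla^{(s)}$ and ${\nabla^{(s)}}^{-1}$ outside and recognize
\[
\sum_{k\geq 0}(-uv)^k\,\underline{e}_k\!\left[\epsilon_p\tfrac{X^\bullet}{M^T}\right]=\underline{\Omega\!\left[-uv\,\epsilon_p\tfrac{X^\bullet}{M^T}\right]}=\PP^{(p)}_{-uv/M^T},
\]
which yields the claimed identity. I do not expect a genuine obstacle; the only delicate point is the sign bookkeeping coming from $\Omega[-Y]=\sum_n(-1)^n e_n[Y]$, which must combine cleanly with the $(-1)^k$ introduced by $\PP^{(p)}_{-u/M^T}$ to produce exactly $\PP^{(p)}_{-uv/M^T}$ on the right.
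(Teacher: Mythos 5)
Your argument is correct and is essentially the paper's proof, which simply states that the theorem follows by combining the two preceding triangularity propositions; you have filled in the short bookkeeping step. In fact your sign tracking is slightly more careful than the paper's: the paper's displayed formula for $W_{i,i}$ in the first proposition omits the $(-1)^i$ you correctly introduce from $\PP^{(p)}_{-u/M^T}$, though this does not affect the final identity since it is absorbed by the $(-uv)^i$ in $\PP^{(p)}_{-uv/M^T}$.
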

\begin{proof}
We combine the previous two propositions. 
\end{proof}

\subsection{Rules for Pieri Coefficients}
We rewrite Theorem \ref{thm:five-term-relation} as follows:
\[
\PP^{(p)}_{-u/M^T} \nabla^{(s)} \PP_{-v/M^T}^{(p)} {\nabla^{(s)}}^{-1} =
\Delta_{v/u}^{(s)}  \PP^{(p)}_{-u/M^T} 
\Delta_{-v/u}^{(s)}.
\]
Taking the coefficient of $u^k v^l$ on the left-hand side yields
\begin{equation}
\PP^{(p)}_{-u/M^T} \nabla^{(s)} \PP_{-v/M^T}^{(p)} {\nabla^{(s)}}^{-1} \Big|_{u^k v^l} 
= (-1)^{k+l} 
\underline{e}_k\left[ \epsilon_p \frac{X^\bullet}{M^T} \right] \nabla^{(s)} \underline{e}_l\left[ \epsilon_p \frac{X^\bullet}{M^T} \right] {\nabla^{(s)}}^{-1}.
\label{eq:k-perp-l-perp}
\end{equation}
Taking the same coefficient on the right-hand side gives
\begin{align}
\begin{split}
\Delta_{v/u}^{(s)}  \PP^{(p)}_{-u/M^T} &\Delta_{-v/u}^{(s)}\Big|_{u^k v^l} \\
&=
\sum_{i = 0 }^k  (-1)^i e_i\left[ \epsilon_s  \frac{D}{M} \right]
(-1)^{k+l} \underline{e}_{k+l}\left[ \epsilon_p \frac{X^{\bullet}}{M^T} \right] 
 h_{k-i}\left[ \epsilon_s  \frac{D}{M} \right].
\label{eq:k+l-perp}
\end{split}
\end{align}

We now apply these operators to $(H_\lambda \otimes e^{\alpha})$, where $\core(\lambda) = \alpha$. 
Applying the operator in Equation \ref{eq:k-perp-l-perp} and extracting the coefficient of $(H_\mu \otimes e^{\alpha})$ gives
\[ (-1)^{k+l}
\sum_{ \lambda\subset_l \nu \subset_k \mu}
d_{\mu,\nu}^{(p)}  d_{\nu,\lambda}^{(p)} \frac{T^{(s)}_\nu}{{T^{(s)}_\lambda}}.
\]
Applying instead Equation \ref{eq:k+l-perp} and again extracting the coefficient of $(H_\mu \otimes e^{\alpha})$, we get
\[
(-1)^{k+l} d_{\mu,\lambda}^{(p)} \sum_{i = 0 }^k (-1)^i e_i \left[  \left(\frac{D_\mu}{M}  \right)^{(s)} \right]
 h_{k-i}\left[ \left(\frac{D_\lambda}{M_0}\right)^{(s)} \right]
 = (-1)^{k+l} d_{\mu,\lambda}^{(p)} h_k \left[ \left( \frac{D_\lambda - D_\mu}{M }\right)^{(s)}\right].
\]
As a consequence, we find that 
\begin{thm}\label{kPieri}
    For any positive integers $k$ and $l$, and any partition $\lambda \subset_{k+l} \mu$,
\begin{equation}
    d_{\mu,\lambda}^{(p)} = \frac{(-1)^k}{e_k\left[ B_{\mu/\lambda}^{(s)}\right]}
    \sum_{\lambda \subset_l \nu \subset_k \mu}  d_{\mu, \nu}^{(p)} d_{\nu,\lambda}^{(p)} T_{\nu/\lambda}^{(s)}.
\end{equation}
This gives a recursion for $d_{\mu,\lambda}^{(p)}$, with the base case where $\lambda\subset_1\mu$ is given by Proposition \ref{Deg1DLem}. 
\end{thm}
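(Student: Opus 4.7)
The plan is to combine the two operator identities already recorded in \eqref{eq:k-perp-l-perp} and \eqref{eq:k+l-perp}: both sides were obtained by extracting the coefficient of $u^k v^l$ from the rewritten five-term relation, so they represent the same operator and must produce equal coefficients of $H_\mu\otimes e^\alpha$ when applied to $H_\lambda\otimes e^\alpha$ (with $\alpha=\core(\lambda)$). Once these two scalar expressions are written down, the recursion follows by algebraic manipulation.

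For the left-hand side \eqref{eq:k-perp-l-perp}, I would apply the operators from inside out. The conjugation $\nabla^{(s)}\underline{e}_l[\epsilon_p X^\bullet/M^T]{\nabla^{(s)}}^{-1}$ acts on $H_\lambda$ by expanding $\underline{e}_l[\epsilon_p X^\bullet/M^T]H_\lambda=\sum_{\lambda\subset_l\nu}d_{\nu,\lambda}^{(p)}H_\nu$ and then multiplying each $H_\nu$ by $T_\nu^{(s)}/T_\lambda^{(s)}=T_{\nu/\lambda}^{(s)}$. A second Pieri application then yields
\[
(-1)^{k+l}\sum_{\lambda\subset_l\nu\subset_k\mu}d_{\mu,\nu}^{(p)}\,d_{\nu,\lambda}^{(p)}\,T_{\nu/\lambda}^{(s)}
\]
as the coefficient of $H_\mu\otimes e^\alpha$.

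For the right-hand side \eqref{eq:k+l-perp}, the operators $e_i[\epsilon_s D/M]$ and $h_{k-i}[\epsilon_s D/M]$ act diagonally on $H_\mu$ and $H_\lambda$ respectively by \eqref{epsilonDM}, while $\underline{e}_{k+l}[\epsilon_p X^\bullet/M^T]$ in the middle produces the Pieri coefficient $d_{\mu,\lambda}^{(p)}$. The resulting coefficient of $H_\mu\otimes e^\alpha$ is
\[
(-1)^{k+l}\,d_{\mu,\lambda}^{(p)}\sum_{i=0}^{k}(-1)^i e_i[(D_\mu/M_0)^{(s)}]\,h_{k-i}[(D_\lambda/M_0)^{(s)}].
\]
The essential step is to collapse this sum into a single plethysm via the identity $h_k[A-B]=\sum_i(-1)^i e_i[B]\,h_{k-i}[A]$, then use $D_\nu=M_0 B_\nu-1$ to obtain $(D_\lambda-D_\mu)/M_0=-B_{\mu/\lambda}$. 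Taking the color-$s$ part gives $-B_{\mu/\lambda}^{(s)}$, and applying $h_k[-Y]=(-1)^k e_k[Y]$ reduces the scalar to $(-1)^k e_k[B_{\mu/\lambda}^{(s)}]$.

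Equating the two coefficient expressions, the $(-1)^{k+l}$ factors cancel, and isolating $d_{\mu,\lambda}^{(p)}$ produces the formula in the theorem. The one place that requires care is notational bookkeeping: keeping the normalization $M$ in the operator form $\Delta_v^{(s)}$ consistent with the scalar $M_0$ that appears once the operator acts on an eigenvector via \eqref{epsilonDM}, and combining the sign from $h_k[-Y]=(-1)^k e_k[Y]$ with the $(-1)^{k+l}$ signs from the two $\underline{e}$ extractions without miscounting. Everything else is a direct consequence of Theorem \ref{thm:five-term-relation}, the Pieri rule for $\underline{e}_k[\epsilon_p X^\bullet/M^T]$, and the diagonal action in \eqref{epsilonDM}.
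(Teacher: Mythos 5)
Your proposal is correct and follows the paper's own argument step for step: extract the $u^k v^l$ coefficient of the rewritten five-term relation on both sides, apply to $H_\lambda\otimes e^\alpha$, read off the coefficient of $H_\mu\otimes e^\alpha$, and equate. Your spelling out of the collapse $\sum_{i}(-1)^i e_i[(D_\mu/M_0)^{(s)}]h_{k-i}[(D_\lambda/M_0)^{(s)}]=h_k[((D_\lambda-D_\mu)/M_0)^{(s)}]=(-1)^k e_k[B_{\mu/\lambda}^{(s)}]$ via $D_\nu=M_0B_\nu-1$ is exactly what the paper does (and your $M_0$ in both plethysms fixes what appears to be a small $M$ vs.\ $M_0$ typo in the displayed equation there).
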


\begin{rem}
 In particular, we obtain a recursion for $d_{\mu,\lambda}^{(p)}$ for each choice of color $s$, and the resulting coefficient is independent of this choice.
One can even choose different values of $s$ at each step of the recursion.   
\end{rem}

Theorem \ref{kPieri} is incredibly powerful, as it allows one to compute Pieri coefficients in terms of the degree $1$ Pieri rules in a very quick way.
The following is computed in the appendix:

\begin{prop}\label{Deg1DLem}
For $\lambda\subset_1 \mu$, let $\square_i\in\mu/\lambda$ be the box of color $i$ and $\chi_i=\chi_{\square_i}$.
We then have
\begin{equation}
d_{\mu,\lambda}^{(p)}
=\frac{1}{qt-1}\left(\frac{\chi_0}{\chi_p}\right)
\prod_{i\in\ZZ/r\ZZ}
\frac{\displaystyle\prod_{\square\in R_i(\mu)}\left(1-q^{-1}t^{-1}\frac{\chi_i}{\chi_\square}\right)}
{\displaystyle\prod_{\square\in A_i(\mu)}\left(1-\frac{\chi_i}{\chi_\square}\right)}.
\label{Deg1D}
\end{equation}

\end{prop}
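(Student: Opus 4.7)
Plan. The strategy is to derive the formula from the intertwining
\[
\mathsf{V}\,\PP^{(p)}_{-v/M^T} \;=\; \Omega\!\left[-v\,\epsilon_p\,\iota D^\dagger/M^T\right]\mathsf{V}
\]
of Proposition \ref{MainPropertiesofV}, by applying both sides to $H_\lambda\otimes e^{\core(\lambda)}$, extracting the coefficient of $v$, and invoking wreath Macdonald--Koornwinder reciprocity (Proposition \ref{wreathMKReciprocity}) to evaluate.

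On the left side, the Pieri expansion $\PP^{(p)}_{-v/M^T}H_\lambda = \sum_k(-v)^k\sum_{\lambda\subset_k\mu}d^{(p)}_{\mu,\lambda}H_\mu$ combined with $\mathsf{V}H_\mu = T^{(0)}_\mu\mathbb{E}_\mu$ (derived from $\mathsf{V}\nabla^{-1}H_\mu=\mathbb{E}_\mu$ and $\nabla H_\mu = T^{(0)}_\mu H_\mu$) yields the $v$-coefficient
\[
-\sum_{\lambda\subset_1\mu}d^{(p)}_{\mu,\lambda}\,T^{(0)}_\mu\,\mathbb{E}_\mu\otimes e^{\core(\lambda)}.
\]
On the right side, $\iota D^\dagger$ acts diagonally in the $H^\dagger_\nu$-basis (with scalar $\iota D^\bullet_\nu$), so $\Omega[-v\,\epsilon_p\,\iota D^\dagger/M^T]$ acts with an explicit eigenvalue on each $H^\dagger_\nu$-component. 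Expanding $T^{(0)}_\lambda\mathbb{E}_\lambda$ via the Cauchy kernel and matching $H^\dagger_\nu$-coefficients on both sides produces a scalar identity for every $\nu$ (of the appropriate core), of the form
\[
\sum_{\lambda\subset_1\mu}d^{(p)}_{\mu,\lambda}\,T^{(0)}_\mu\,H_\nu[\iota D^\bullet_\mu] \;=\; T^{(0)}_\lambda\,H_\nu[\iota D^\bullet_\lambda]\cdot\text{(explicit character factor in $\nu$)}.
\]

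Next, I would use Proposition \ref{wreathMKReciprocity} to rewrite each $H_\nu[\iota D^\bullet_\mu]$ as $(T^{(0)}_\nu/T^{(0)}_\mu)H_\mu[\iota D^\bullet_\nu]$, which factors the explicit $\nu$-dependence out. Varying $\nu$ across partitions of core $w_0\core(\lambda)$ and exploiting the triangular structure of the Cauchy kernel isolates each $d^{(p)}_{\mu,\lambda}$ as an explicit rational function in the characters of $\mu$. The final simplification uses the character identity $D^{(i)}_\mu = qt\sum_{\square\in R_i(\mu)}\chi_\square - \sum_{\square\in A_i(\mu)}\chi_\square$ recorded earlier in the paper to convert the raw product of cell characters of $\mu$ into the product over addable and removable corners in \eqref{Deg1D}. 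The prefactor $(qt-1)^{-1}(\chi_0/\chi_p)$ emerges from $T^{(0)}_\mu/T^{(0)}_\lambda=-\chi_0$ (since $\mu/\lambda$ has a unique color-$0$ cell of character $\chi_0$) combined with the $\epsilon_p$-localization, with the $(qt-1)^{-1}$ arising as the regularized pole at the addable-corner factor associated to the color-$p$ cell $\square_p$.

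The main obstacle is the combinatorial telescoping in the last step: the raw character product must be rearranged into the corner product with precise cancellations at the cells $\square_0$ and $\square_p$, and the delicate pole/regularization pattern that produces $(qt-1)^{-1}$ must be tracked across all $r$ colors simultaneously. A secondary check is verifying that the resulting expression for $d^{(p)}_{\mu,\lambda}$ is independent of the test partition $\nu$ used to isolate it; this consistency condition ultimately forces the precise form of the final product formula.
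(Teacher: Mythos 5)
Your plan has a circularity problem at its core. After passing through $\mathsf{V}$, expanding $\mathbb{E}_\lambda$ and $\mathbb{E}_\mu$ in the Cauchy kernel, and matching the coefficient of $H^\dagger_\nu$, the scalar identity you obtain is
\[
\sum_{\lambda\subset_1\mu}d^{(p)}_{\mu,\lambda}\,T^{(0)}_\mu\,H_\nu[\iota D^\bullet_\mu]
= T^{(0)}_\lambda\,H_\nu[\iota D^\bullet_\lambda]\cdot e_1\!\left[\epsilon_p\frac{\iota D^\bullet_\nu}{M^T}\right].
\]
Applying reciprocity to replace $H_\nu[\iota D^\bullet_\mu]$ by $(T^{(0)}_\nu/T^{(0)}_\mu)H_\mu[\iota D^\bullet_\nu]$ and likewise for $H_\nu[\iota D^\bullet_\lambda]$ cancels all $T^{(0)}$ factors and leaves you with
\[
\sum_{\lambda\subset_1\mu}d^{(p)}_{\mu,\lambda}\,H_\mu[\iota D^\bullet_\nu]
= e_1\!\left[\epsilon_p\frac{\iota D^\bullet_\nu}{M^T}\right]H_\lambda[\iota D^\bullet_\nu],
\]
which is precisely the defining Pieri expansion $\tilde e_1^{(p)}H_\lambda=\sum_\mu d^{(p)}_{\mu,\lambda}H_\mu$ evaluated at $X^\bullet=\iota D^\bullet_\nu$. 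You have therefore traveled in a circle; the step you label as ``isolates each $d^{(p)}_{\mu,\lambda}$'' is not a telescoping or regularization exercise but in fact the entire content of the proposition, and your outline gives no mechanism for extracting the individual coefficients from this family of evaluations. In particular, the claim that the prefactor $(qt-1)^{-1}$ arises as a ``regularized pole at the addable-corner factor'' is not supported by any computation you have set up.

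The paper takes a genuinely different route. It also starts by pushing the Pieri action through $\mathsf{V}$, but then observes (Proposition \ref{DoperatorProperties}(4)) that $-e_1[\epsilon_{-p}D^\dagger/M]=\mathbb{D}_{\epsilon_0-\epsilon_p}$, so that the action on $\mathbb{E}_\lambda$ is governed by the \emph{explicit} multivariable constant-term formula for $\mathbb{D}$. One then evaluates the $r$ constant terms in $z_0,\ldots,z_{r-1}$ iteratively using the partial-fraction lemma (Lemma \ref{ConstTermLem}), beginning with $z_p$ (the only variable appearing to the first power in the numerator, which is why $\chi_0/\chi_p$ appears) and proceeding cyclically. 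Each pole evaluation either adds a box of the appropriate color or triggers a cancellation of type \eqref{TCancel}/\eqref{QCancel}, and the careful case analysis of which poles survive produces exactly the corner product in \eqref{Deg1D} with the $(1-qt)$ power balanced. This residue computation is what supplies the explicit rational function; no appeal to reciprocity (which you use and which re-derives the Pieri rule tautologically) is needed.

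To repair your proposal, you would need to either (a) adopt the paper's residue calculus for $\mathbb{D}_{\epsilon_0-\epsilon_p}$, or (b) produce an independent evaluation of $H_\mu[\iota D^\bullet_\nu]$ for enough test partitions $\nu$ to set up and solve the linear system you gesture at—but the latter is not available a priori and is, in effect, equivalent to the problem you are trying to solve.
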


We should note that there is also an adjoint version of our relations, which can be written in the following way:
\begin{equation} \label{eq:dagger-five-term}
 {\nabla^{\dagger (s)}}^{-1} 
\TT^{(p)}_{-uv} 
\nabla^{\dagger (s)}
=
\Delta_{-v }^{\dagger (s)}  \TT_u^{(p)}
\Delta_{v }^{\dagger (s)}
\TT^{(p)}_{-u} .
\end{equation}
However, these give dual Pieri formulas for the dual 
basis $\{H_\lambda^{\dagger} \}$. One can write recursions similarly to how we do for the dual Pieri formulas in Theorem \ref{thm:dual-pieri-formula}, leading to formulas for the monomial expansion of the dagger basis.
\\

We now give formulas for the dual Pieri rules for wreath Macdonald polynomials.

\subsection{The dual Pieri formulas}
For $\lambda \subset_k \mu$, let $c_{\lambda,\mu}^{(p)}$ be the Pieri coefficients appearing in the expansion
\[
h_{k}^\perp[X^{(p)}] H_\mu = \sum_{\lambda \subset_k \mu} c_{\lambda,\mu}^{(p)} ~ H_\lambda.
\]
Our goal in this section is to give a formula for these coefficients. 
\\

Define the ``downarrow'' operator by setting
\[
\downarrow F[X^\bullet;q,t] \otimes e^{\alpha} = F[-\iota X^\bullet;q^{-1},t^{-1}] \otimes e^{w_0 \alpha}.
\]

On operators, we have the following:
\begin{align*}
\downarrow \nabla \downarrow = \nabla^{-1} ,
&&
\downarrow \TT^{(i)}_{A} \downarrow = \TT^{(i)}_{-\iota A_\ast}  ,
&&
\downarrow \PP^{(i)}_A \downarrow = \PP^{(-i)}_{-A_\ast \iota},
&& \downarrow F[\epsilon_s D^\dagger] \downarrow
= F[\epsilon_s D_\ast].
\end{align*}
Conjugating $\mathsf{V}$ by this transformation, we get a new operator:
\[
\mathsf{V}_\ast = \nabla^{-1} \PP_{1/M_\ast}^{(0)} \TT_{-1}^{(0)} \nabla^{-1} .
\]
We now let \[
{}^\ast\mathbb{E}_\lambda = \Omega \left[ 
\frac{\iota X^\bullet}{M} qt (D^\bullet_\lambda)_\ast
\right]
=
\Omega \left[ 
 X^\bullet \frac{ qt \iota (D^\bullet_\lambda)_\ast}{M}
\right].
\]
Note that
\[
\TT_v^{(i)}\Omega \left[ 
 X^\bullet \frac{ qt \iota (D^\bullet_\lambda)_\ast}{M}
\right]
= \Omega \left[ 
 X^\bullet \frac{ qt \iota (D^\bullet_\lambda)_\ast}{M}
\right]
\Omega \left[ 
\epsilon_i v \frac{ qt \iota (D^\bullet_\lambda)_\ast}{M}
\right].
\]
We also define
\begin{align*}
{}^\ast\Delta_{v}^{(i)} \coloneqq \Omega\left[ -v \epsilon_i \frac{ D_\ast}{M_\ast}  \right] &&
\text{ and its inverse}
&&
{}^\ast\Delta_{-v}^{(i)} \coloneqq \Omega\left[ v \epsilon_i \frac{ D_\ast}{M_\ast}  \right].
\end{align*}
In particular, note that since
\begin{align*}
\epsilon_i \frac{(D_\lambda^\bullet)_\ast}{M_\ast} 
& = \epsilon_i \sum_{a,b \geq 0 } q^{-a} t^{-b} \sigma^{a -b } \sum_{ j \in \mathbb{Z}/r\mathbb{Z}} \epsilon_j (D^{(j)}_\lambda)_\ast \\
& = \epsilon_i \sum_{a,b \geq 0 } q^{-a} t^{-b}  \sum_{ j \in \mathbb{Z}/r\mathbb{Z}} \epsilon_{j-a+b} (D^{(j)}_\lambda)_\ast \\
& = \epsilon_i \sum_{ p \in \mathbb{Z} / r\mathbb{Z}} \epsilon_p \left( \sum_{a,b \geq 0 } q^{a} t^{b}  D^{(p+a-b)}_\lambda \right)_\ast \\
& = \epsilon_i \left( \frac{D_\lambda}{M_0} \right)^{(i)}_\ast,
\end{align*}
we have
\[
{}^\ast \Delta_v^{(i)} H_\lambda
= \Omega\left[-v \left( \frac{D_\lambda}{M_0} \right)^{(i)}_\ast  \right] H_\lambda.
\]

\begin{prop} We have the following properties of $\mathsf{V^\ast}$:
\begin{itemize}
    \item $\mathsf{V}_\ast \underline{F} = F[-D_\ast] \mathsf{V}_\ast$,
    \item $F^\perp[X^\bullet] \mathsf{V}_\ast = \mathsf{V}_\ast F\left[ \epsilon_i  \frac{qt \iota D^{\dagger}_\ast}{M}  \right]$ and, in particular, $\TT^{(i)}_{\pm v} \mathsf{V}_\ast = \mathsf{V}_\ast \Omega\left[ \pm v\epsilon_i  \frac{qt \iota D^{\dagger}_\ast}{M}  \right]$,
    \item $\mathsf{V}_\ast (1 \otimes e^{w_0 \alpha} ) = {}^\ast \mathbb{E}_\alpha \otimes e^{w_0 \alpha}$ for any $r$-core $\alpha$,
    \item $\mathsf{V}_\ast (H_\lambda^\dagger \otimes e^{w_0 \core(\lambda)} ) = {}^\ast \mathbb{E}_\lambda \otimes e^{w_0 \core(\lambda)}$.
\end{itemize}
\end{prop}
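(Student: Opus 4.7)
The plan is to prove all four properties by establishing that $\mathsf{V}_\ast = \downarrow\mathsf{V}\downarrow$ and then transferring the corresponding properties of $\mathsf{V}$ in Proposition~\ref{MainPropertiesofV} through the $\downarrow$-conjugation rules listed just above the definition of $\mathsf{V}_\ast$.

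First I would verify the identity $\mathsf{V}_\ast = \downarrow\mathsf{V}\downarrow$. Applying the given conjugation rules factor-by-factor to $\mathsf{V} = \nabla\,\PP^{(0)}_{-1/M^T}\,\TT^{(0)}_1\,\nabla$ produces $\nabla^{-1}\,\PP^{(0)}_{(1/M_\ast^T)\iota}\,\TT^{(0)}_{-\iota}\,\nabla^{-1}$. To match this with the stated formula for $\mathsf{V}_\ast$, observe that $\iota\epsilon_0 = \epsilon_0$ gives $\TT^{(0)}_{-\iota} = \TT^{(0)}_{-1}$, while the identification $\PP^{(0)}_{(1/M_\ast^T)\iota} = \PP^{(0)}_{1/M_\ast}$ reduces to verifying the equality $\epsilon_0(1/M_\ast^T)\iota X^\bullet = \epsilon_0 X^\bullet/M_\ast$, which is immediate upon expanding $1/M_\ast = \sum_{a,b\geq 0}q^{-a}t^{-b}\sigma^{a-b}$ and noting that both sides yield $\sum_{a,b\geq 0}q^{-a}t^{-b}\epsilon_0 X^{(a-b)}$.

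For properties (1) and (2), I would conjugate the corresponding properties of $\mathsf{V}$ by $\downarrow$. For (1), starting from $\mathsf{V}\underline{G} = G[\iota D^\dagger]\mathsf{V}$ with the substitution $G[X^\bullet] = F[-\iota X^\bullet]$, conjugation by $\downarrow$ yields $\mathsf{V}_\ast(\downarrow\underline{G}\downarrow) = (\downarrow G[\iota D^\dagger]\downarrow)\mathsf{V}_\ast$; the two conjugates can then be simplified, using the rule $\downarrow F[\epsilon_s D^\dagger]\downarrow = F[\epsilon_s D_\ast]$ together with $\iota^2 = 1$, to $\underline{F}$ and $F[-D_\ast]$ respectively. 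Property (2) is obtained in the same way by conjugating $F^\perp\mathsf{V} = \mathsf{V}\,F[-D/M]$, and invoking the key identity $qt/M = 1/M_\ast$ to rewrite the resulting right-hand side in the form $F[\epsilon_i\, qt\,\iota D^\dagger_\ast/M]$ stated in the proposition.

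For (3) and (4), I would argue by direct computation using the observation $\downarrow\mathbb{E}_\lambda\otimes e^{\core(\lambda)} = {}^\ast\mathbb{E}_\lambda\otimes e^{w_0\core(\lambda)}$: applying $\downarrow$ to $\Omega[-X^\bullet D^\bullet_\lambda/M]$ gives $\Omega[\iota X^\bullet (D^\bullet_\lambda)_\ast/M_\ast]$, and using $1/M_\ast = qt/M$ finishes the match with the definition of ${}^\ast\mathbb{E}_\lambda$. Then for (3), one has $\mathsf{V}_\ast(1\otimes e^{w_0\alpha}) = \downarrow\mathsf{V}(1\otimes e^\alpha) = \downarrow(\mathbb{E}_\alpha\otimes e^\alpha) = {}^\ast\mathbb{E}_\alpha\otimes e^{w_0\alpha}$. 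For (4), the same argument is applied to $H_\lambda^\dagger\otimes e^{w_0\core(\lambda)}$, using $\downarrow(H_\lambda^\dagger\otimes e^{w_0\core(\lambda)}) = H_\lambda\otimes e^{\core(\lambda)}$; the case (3) corresponds to $\lambda$ an $r$-core, using that $H_\alpha = 1$ (which follows from the degree-zero dominance class being a singleton in Definition~\ref{ModDef}).

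The main obstacle I anticipate is the careful bookkeeping of the $\downarrow$-action on compound operators, particularly the interplay between $\iota$, transposition, and the $\ast$-substitution in expressions involving $1/M$ that appear in both the definition of $\mathsf{V}_\ast$ and in property (2). A secondary delicate point in (4) is ensuring that the scalar factor $(T^{(0)}_\lambda)_\ast$ arising from conjugating the eigenvalue equation $\mathsf{V}(H_\lambda\otimes e^{\core(\lambda)}) = T^{(0)}_\lambda\mathbb{E}_\lambda\otimes e^{\core(\lambda)}$ is absorbed into the appropriate normalization on the dagger side, via either the action of $\nabla^{-1}$ on $H_\lambda^\dagger$ or a suitable reinterpretation of ${}^\ast\mathbb{E}_\lambda$.
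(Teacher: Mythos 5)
The paper states this proposition without proof, and your strategy—verifying $\mathsf{V}_\ast=\downarrow\mathsf{V}\downarrow$ and transporting Proposition~\ref{MainPropertiesofV} through $\downarrow$-conjugation—is certainly the intended one, since the paper literally introduces $\mathsf{V}_\ast$ as ``conjugating $\mathsf{V}$ by this transformation.'' Your factor-by-factor verification of $\mathsf{V}_\ast=\downarrow\mathsf{V}\downarrow$ is correct, and the logic for (1) and (2) (substituting $G[X^\bullet]=F[-\iota X^\bullet;q^{-1},t^{-1}]$ before conjugating and using $\iota^2=1$) is sound.

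There is, however, a concrete error in the key identity you invoke: $qt/M=1/M_\ast$ is false. Since $M=(1-q\sigma)(1-t\sigma^{-1})$ is \emph{not} symmetric, one has instead $qt\,M_\ast^T=M$ (equivalently $qt\,M_\ast=M^T$): expanding, $qt\,M_\ast^T=(q-\sigma^{-1})(t-\sigma)=1-q\sigma-t\sigma^{-1}+qt=M$, whereas $qt\,M_\ast=(q-\sigma)(t-\sigma^{-1})=M^T\neq M$ unless $q=t$. In all the places you use this, the transpose has to be moved through the pairing via the paper's Proposition $f[X^\bullet A Y^\bullet]=f[Y^\bullet A^T X^\bullet]$; the computations then go through, but as written the proof does not close. (There is also a secondary subtlety you should be explicit about: $1/M_\ast$ and $qt/M^T$ agree as formal series only in the $|q|,|t|>1$ regime, consistent with the $\mathbb{D}^\ast$ conventions—the ``default'' $|q|,|t|<1$ expansion of $1/M$ in the printed formula for ${}^\ast\mathbb{E}_\lambda$ gives a genuinely different series.) For item (4) you correctly flag the $(T^{(0)}_\lambda)_\ast$ factor as delicate but do not resolve it; this is a real gap, not just a formality. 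A clean route to $\downarrow\mathbb{E}_\lambda={}^\ast\mathbb{E}_\lambda$ that avoids the $1/M$-expansion issue entirely is to apply $\downarrow$ to the Cauchy expansion $\mathbb{E}_\lambda=\sum_\mu H^\dagger_\mu[X^\bullet]H_\mu[\iota D^\bullet_\lambda]/N_\mu$ and use $H_\mu^\dagger[-\iota X^\bullet;q^{-1},t^{-1}]=H_\mu[X^\bullet]$, $H_\mu[\iota(D^\bullet_\lambda)_\ast;q^{-1},t^{-1}]=H^\dagger_\mu[-(D^\bullet_\lambda)_\ast]$, together with the homogeneity identity $(qt)^{|\quot(\mu)|}(N_\mu)_\ast=N_\mu$; this is the mechanism by which the $qt$'s rebalance, and carrying it out carefully is what will settle the normalization in (4).
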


\begin{thm}\label{thm:dual-five-term-relation}
For any colors $s$ and $p$,
    \[
{\nabla^{(s)}}\TT^{(p)}_{uv} 
{\nabla^{(s)}}^{-1} 
= \TT^{(p)}_{-u} {}^\ast\Delta_{-v }^{(s)}  \TT_{u}^{(p)}
{}^\ast \Delta_{v }^{(s)}.
    \]
\end{thm}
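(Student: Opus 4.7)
The plan is to mirror the proof of Theorem \ref{thm:five-term-relation}. Setting
\[
W_\ast(u,v) := \TT^{(p)}_{-u}\, {}^\ast\Delta^{(s)}_{-v}\, \TT^{(p)}_u\, {}^\ast\Delta^{(s)}_v \;=\; \sum_{i,j} u^i v^j\, W^\ast_{i,j},
\]
the claim reduces to showing $W^\ast_{i,j} = \delta_{i,j}\, \nabla^{(s)} h_i^\perp[X^{(p)}]\, {\nabla^{(s)}}^{-1}$, which, once summed, yields $\nabla^{(s)} \TT^{(p)}_{uv} {\nabla^{(s)}}^{-1}$. Two triangularity lemmas will force the off-diagonal terms to vanish, and the diagonal will be identified along the way.

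For the upper triangularity ($W^\ast_{i,j}=0$ when $j>i$) and identification of the diagonal, I would compute the middle factor directly on the basis $\{H_\mu\}$. Since ${}^\ast\Delta^{(s)}_v$ acts diagonally by $\Omega\bigl[-v(D_\mu/M_0)^{(s)}_\ast\bigr]$ and the dual Pieri rule gives $h_k^\perp[X^{(p)}] H_\mu = \sum_{\nu \subset_k \mu} c^{(p)}_{\nu,\mu} H_\nu$, the identity $(D_\mu - D_\nu)/M_0 = B_{\mu/\nu}$ produces
\[
{}^\ast\Delta^{(s)}_{-v}\, h_k^\perp[X^{(p)}]\, {}^\ast\Delta^{(s)}_v\, H_\mu \;=\; \sum_{\nu \subset_k \mu} c^{(p)}_{\nu,\mu} \prod_{\substack{\square \in \mu/\nu \\ \bar c_\square = s}} \Bigl(1 - \tfrac{v}{\chi_\square}\Bigr) H_\nu,
\]
which is a polynomial in $v$ of degree exactly $k$, with top coefficient matching $\nabla^{(s)} h_k^\perp[X^{(p)}] {\nabla^{(s)}}^{-1}\, H_\mu$ via $\prod_{\square\in\mu/\nu,\, \bar c_\square = s}(-1/\chi_\square) = T_\nu^{(s)}/T_\mu^{(s)}$. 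Multiplying on the left by $\TT^{(p)}_{-u} = \sum_l (-u)^l h_l^\perp[X^{(p)}]$ only raises the $u$-degree (by $l$) without touching $v$, so in every nonzero monomial the $v$-degree is bounded above by the $u$-degree.

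For the lower triangularity ($W^\ast_{i,j}=0$ when $i>j$), I would conjugate the leftmost three factors of $W_\ast$ by $\mathsf{V}_\ast$. Using $\TT^{(p)}_{\pm u} \mathsf{V}_\ast = \mathsf{V}_\ast\, \Omega\bigl[\pm u\epsilon_p\, qt\iota D^\dagger_\ast/M\bigr]$ and $\mathsf{V}_\ast \underline{F} = F[-D_\ast]\mathsf{V}_\ast$, this produces
\[
\mathsf{V}_\ast^{-1}\, \TT^{(p)}_{-u}\, {}^\ast\Delta^{(s)}_{-v}\, \TT^{(p)}_u\, \mathsf{V}_\ast \;=\; \Omega\bigl[-u\epsilon_p qt\iota D^\dagger_\ast/M\bigr]\, \underline{\Omega[-v\epsilon_s X^\bullet/M_\ast]}\, \Omega\bigl[u\epsilon_p qt\iota D^\dagger_\ast/M\bigr].
\]
Acting on $H_\mu^\dagger$, the middle multiplication expands by the dagger analog of the Pieri rule for $\Omega[-v\epsilon_s X^\bullet/M_\ast]$, whose $v^k$ coefficient sums over $\nu$ with $\mu \subset_k \nu$; the outer diagonal factors contribute $\Omega\bigl[u\epsilon_p qt\iota(D^\bullet_{\nu,\ast}-D^\bullet_{\mu,\ast})/M\bigr]$, which is polynomial in $u$ of degree at most $k$ by the same character-counting used in the proof of Theorem \ref{thm:five-term-relation}. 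Conjugation by $\mathsf{V}_\ast$ preserves $u$-polynomiality, and the rightmost ${}^\ast\Delta^{(s)}_v$ carries no $u$, so the $v^k$ coefficient of $W_\ast(u,v)$ has $u$-degree at most $k$.

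Combining both triangularities collapses $W_\ast(u,v) = \sum_i (uv)^i W^\ast_{i,i}$, and the first step identifies $W^\ast_{i,i} = \nabla^{(s)} h_i^\perp[X^{(p)}] {\nabla^{(s)}}^{-1}$, finishing the proof. The main obstacle will be the second step: establishing the dagger-basis Pieri expansion for $\underline{\Omega[-v\epsilon_s X^\bullet/M_\ast]}$ and verifying that the conjugated operator is genuinely polynomial in $u$ (the analog of the corresponding polynomiality used implicitly in the proof of Theorem \ref{thm:five-term-relation}); both involve careful bookkeeping with $qt\iota D^\dagger_\ast/M$ and $M_{0,\ast} B^\bullet_{\nu/\mu,\ast}$, but once in hand, the degree argument closes immediately.
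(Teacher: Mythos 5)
Your proposal is correct and takes essentially the same approach as the paper's proof: both establish one triangularity and identify the diagonal by computing ${}^\ast\Delta^{(s)}_{-v}\, h_k^\perp[X^{(p)}]\, {}^\ast\Delta^{(s)}_v$ on the basis $\{H_\mu\}$ via the dual Pieri rule, and obtain the other triangularity by conjugating $\TT^{(p)}_{-u}\,{}^\ast\Delta^{(s)}_{-v}\,\TT^{(p)}_u$ through $\mathsf{V}_\ast$. The paper also records an alternative route (conjugate the dagger five-term relation \eqref{eq:dagger-five-term} by $\downarrow$), which you do not mention, but your route is the one the paper actually sketches.
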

\begin{proof}
One can prove this in a similar way to the proof of Theorem \ref{thm:five-term-relation}.
    We have two important equalities:
    \begin{align*}
{}^\ast\Delta_{-v }^{(s)} h_k^{\perp}[X^{(p)}]
{}^\ast \Delta_{v }^{(s)}
H_\mu \otimes e^{\alpha}
& = \sum_{ \lambda \subset_k \mu} c_{\lambda,\mu}^{(p)} H_\lambda \Omega\left[v \epsilon_s \frac{(D_\lambda^\bullet - D_\mu^\bullet)_\ast}{M_\ast} \right]\\
&= \sum_{ \lambda \subset_k \mu} c_{\lambda,\mu}^{(p)} H_\lambda \prod_{ \substack{ \square \in \mu/\lambda \\ \bar{c}_\square = s}} (1-v \chi_\square^{-1}),
    \end{align*}
and 
    \begin{align*}
         \TT_{-u}^{(p)}
{}^\ast \Delta_{-v }^{(s)}
\TT^{(p)}_{u} \mathsf{V}_\ast
= \mathsf{V}_\ast \Omega\left[- u \epsilon_p  \frac{qt \iota D^\dagger_\ast}{M} \right]
\PP_{v/M_\ast}^{(s)} \Omega\left[ u \epsilon_p  \frac{qt \iota D^\dagger_\ast}{M} \right].
    \end{align*}
The first equality establishes one triangularity in $u$ and $v$, and the equality between the the diagonal term and the left-hand side of the proposition. The second equality establishes the second triangularity in $u$ and $v$.

Alternatively, one can conjugate Equation \eqref{eq:dagger-five-term} by $\downarrow$.
\end{proof}

First rewrite the the five-term relation as
\[
\TT^{(p)}_{u}
{\nabla^{(s)}}\TT^{(p)}_{v} 
{\nabla^{(s)}}^{-1} 
= {}^\ast\Delta_{-v/u }^{(s)}  \TT_{u}^{(p)}
{}^\ast \Delta_{v/u }^{(s)}.
\]
Taking the coefficient of $u^l v^k$ on the left-hand side applied to $H_\mu \otimes e^{\alpha}$, $\alpha = \core(\mu)$,
\begin{align*}
\TT^{(p)}_{u} 
    {\nabla^{(s)}} \TT^{(p)}_{v} 
{\nabla^{(s)}}^{-1}  H_\mu \otimes e^{\alpha} \Big|_{u^l v^k}
= 
\sum_{\lambda \subset_l \nu \subset_k \mu} c_{\mu,\nu}^{(p)} c_{\nu,\lambda}^{(p)} \frac{T^{(s)}_\nu}{T^{(s)}_\mu} H_\lambda.
\end{align*}
On the right-hand side, we have
\begin{align*}
     {}^\ast\Delta_{-v/u }^{(s)}  \TT_{u}^{(p)}
{}^\ast \Delta_{v/u }^{(s)} H_\mu \otimes e^{\alpha} \Big|_{u^l v^k}
= 
\sum_{\lambda \subset_{k+l} \mu} 
c_{\mu,\lambda}^{(p)} \Omega\left[ -v \epsilon_i \left( \frac{  D_\mu - D_\lambda}{M_0}  \right)^{(i)}_\ast \right] \Big|_{v^{k}} H_\lambda \otimes e^{\alpha}.
\end{align*}
Equating the coefficient of $H_\lambda \otimes \alpha$ in both expressions gives that
\begin{thm}\label{thm:star-dual-pieri-formula} Let $k$ and $l$ be positive integers. For any colors $p$ and $s$, and for $\lambda \subset_{k+l} \mu$,
    \begin{align*}
     c_{\lambda,\mu}^{(p)} =  \frac{(-1)^k}{e_k \left[\left(B^{(s)}_{\mu/\lambda}\right)_\ast \right]}  \sum_{\lambda \subset_l \nu \subset_k \mu}
     \frac{1}{T_{\mu/\nu}^{(s)}}
     c_{\nu,\mu}^{(p)} c_{\lambda,\nu}^{(p)} .
    \end{align*}
When $\lambda \subset_1 \mu$, we use the formula at the end of this section in Proposition \ref{Deg1CLem}.
\end{thm}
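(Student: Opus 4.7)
The plan is to combine the two operator computations already displayed in the excerpt and then perform a few elementary identifications. I would equate the two expansions of the rewritten five-term relation
\[
\TT_u^{(p)} \nabla^{(s)} \TT_v^{(p)} (\nabla^{(s)})^{-1} = {}^\ast\Delta_{-v/u}^{(s)} \TT_u^{(p)} {}^\ast \Delta_{v/u}^{(s)}
\]
applied to $H_\mu \otimes e^\alpha$ and read off the coefficient of $u^l v^k H_\lambda$. The left-hand side has already been computed to yield $\sum_{\lambda \subset_l \nu \subset_k \mu} (T^{(s)}_\nu / T^{(s)}_\mu)\, c^{(p)}_{\nu,\mu} c^{(p)}_{\lambda,\nu}$. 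For the right-hand side, I would use that ${}^\ast\Delta^{(s)}_v$ is diagonal on the $H_\bullet$-basis with eigenvalue $\Omega[-v (D_\bullet/M_0)^{(s)}_\ast]$, so that after commuting the resulting scalar through $\TT_u^{(p)}$ and applying the Pieri expansion $\TT_u^{(p)} H_\mu = \sum_{j,\,\lambda \subset_j \mu} u^j c^{(p)}_{\lambda,\mu} H_\lambda$, the overall scalar picked up on the $H_\lambda$-summand is $\Omega[-(v/u)((D_\mu - D_\lambda)/M_0)^{(s)}_\ast]$.

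The key identification is that for $\lambda \subset_{k+l} \mu$ (which forces a common core and hence $\lambda \subset \mu$ as shapes), the color-$s$ part telescopes to $((D_\mu - D_\lambda)/M_0)^{(s)} = B^{(s)}_{\mu/\lambda}$, so its image under the star involution is $(B^{(s)}_{\mu/\lambda})_\ast = \sum_{\square \in \mu/\lambda,\, \bar c_\square = s} \chi_\square^{-1}$. Then
\[
\Omega\!\left[-(v/u)(B^{(s)}_{\mu/\lambda})_\ast\right] = \prod_{\square \in \mu/\lambda,\, \bar c_\square = s}\!\left(1 - (v/u)\chi_\square^{-1}\right),
\]
whose $(v/u)^k$-coefficient is $(-1)^k e_k[(B^{(s)}_{\mu/\lambda})_\ast]$. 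Matching $u^j (v/u)^k = u^{j-k} v^k$ to $u^l v^k$ forces $j = k+l$, which is why the outer index on the right-hand side is $\lambda \subset_{k+l} \mu$.

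Equating the two coefficients of $u^l v^k H_\lambda$ gives
\[
(-1)^k c^{(p)}_{\lambda,\mu}\, e_k\!\left[(B^{(s)}_{\mu/\lambda})_\ast\right] = \sum_{\lambda \subset_l \nu \subset_k \mu} \frac{T^{(s)}_\nu}{T^{(s)}_\mu}\, c^{(p)}_{\nu,\mu} c^{(p)}_{\lambda,\nu}.
\]
To finish, I would rewrite $T^{(s)}_\nu / T^{(s)}_\mu = 1/T^{(s)}_{\mu/\nu}$, which is valid because the common core of $\lambda, \nu, \mu$ gives $T^{(s)}_\mu / T^{(s)}_\nu = \prod_{\square \in \mu/\nu,\, \bar c_\square = s}(-\chi_\square) = T^{(s)}_{\mu/\nu}$, and then divide by $(-1)^k e_k[(B^{(s)}_{\mu/\lambda})_\ast]$. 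The only obstacle is bookkeeping: one must carefully track the sign conventions of ${}^\ast\Delta^{(s)}_{\pm v/u}$ and the substitution $v \mapsto v/u$ that converts the five-term relation into a recursion, and observe that the $e_k$ denominator, being the degree-$k$ elementary symmetric function in $k+l$ distinct monomials in $q^{-1}, t^{-1}$, is nonzero so the division is justified.
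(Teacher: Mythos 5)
Your proposal is correct and follows essentially the same route as the paper: rewrite the dual five-term relation so that $u^l v^k$ can be extracted, apply it to $H_\mu\otimes e^\alpha$, expand both sides using the diagonality of ${}^\ast\Delta^{(s)}_v$ and the Pieri expansion of $\TT^{(p)}_u$, and match coefficients of $H_\lambda$. Your bookkeeping is careful and accurate (in particular you correctly use $c^{(p)}_{\nu,\mu}$, whereas the paper's intermediate display has a harmless index typo writing $c^{(p)}_{\mu,\nu}$), and the telescoping identity $((D_\mu-D_\lambda)/M_0)^{(s)}=B^{(s)}_{\mu/\lambda}$, the equality $T^{(s)}_\nu/T^{(s)}_\mu = 1/T^{(s)}_{\mu/\nu}$, and the nonvanishing of the $e_k$ denominator are all exactly the points that need to be checked.
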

Note that the coefficients $M_{\lambda,\vec{\gamma}}(q,t) = M_{\lambda,\vec{\gamma}}$ in the monomial expansion
\[
H_\lambda[X^\bullet]  = \sum_{\vec{\gamma} \vdash |\quot(\lambda)|}
M_{\lambda,\vec{\gamma}}~ ~m_{\vec{\gamma}}[X^\bullet]
\]
can now be readily computed, since
\[
M_{\lambda,\vec{\gamma}} = 
\prod_{i=0}^{r-1} 
\prod_{j = 1}^{\gamma^i}
h_{\gamma^i_j}^{\perp}[X^{(i)}]
~~ H_\lambda.
\]
Therefore, Theorem \ref{thm:star-dual-pieri-formula} gives a tableau formula for the monomial expansion of wreath Macdonald polynomials.

\subsection{Without the starred Delta operators}
In this section, we will see how to get the dual five-term relation without the star on the Delta operators.
We start by writing Theorem \ref{thm:dual-five-term-relation} as
\[
\TT_{u}^{(p)} \nabla^{(s)} \TT_{uv}^{(p)} {\nabla^{(s)}}^{-1} 
= {}^{\ast}\Delta_{-v}^{(s)} \TT_u^{(p)} {}^\ast \Delta_{v}^{(s)}.
\]
Acting on $H_\lambda$ in $\Lambda^{\otimes r} \otimes e^{\core(\lambda)}$, 
\begin{align*}
{}^{\ast}\Delta_{-v}^{(s)} \TT_u^{(p)} {}^\ast \Delta_{v}^{(s)} H_\lambda 
& = 
{}^{\ast}\Delta_{-v}^{(s)} \TT_u^{(p)} \Omega\left[ -v \left( \frac{D_\lambda}{M_0}\right)_\ast^{(s)}\right] H_\lambda 
\\
& =
{}^{\ast}\Delta_{-v}^{(s)} \sum_{ k \geq 0 } u^k \sum_{\mu \subset_k \lambda} c_{\lambda,\mu}^{(p)} \Omega\left[ -v \left( \frac{D_\lambda}{M_0}\right)_\ast^{(s)}\right] H_\mu 
\\
& =
\sum_{ k \geq 0 } u^k \sum_{\mu \subset_k \lambda} c_{\lambda,\mu}^{(p)} \Omega\left[v\left(B_\mu - B_\lambda\right)_\ast^{(s)}\right] H_\mu 
\\& =
\sum_{ k \geq 0 } u^k \sum_{\mu \subset_k \lambda} c_{\lambda,\mu}^{(p)} \left(\prod_{ \substack{ \square \in \mu \\ \bar{c}_\square = s}} \frac{1}{1-v \chi_\square^{-1}}  \right)
\left(\prod_{ \substack{ \square \in \lambda \\ \bar{c}_\square = s}}  {1-v \chi_\square^{-1}}  \right)
H_\mu 
\\
& =
\sum_{ k \geq 0 } u^k \sum_{\mu \subset_k \lambda} c_{\lambda,\mu}^{(p)} \left(\prod_{ \substack{ \square \in \mu \\ \bar{c}_\square = s}} \frac{1}{1-v^{-1} \chi_\square }  \right)
\left(\prod_{ \substack{ \square \in \lambda \\ \bar{c}_\square = s}}  {1-v^{-1} \chi_\square }  \right)
v^{-d_s(\mu) + d_s(\lambda)} \frac{T_\mu^{(s)}}{T_\lambda^{(s)}}
H_\mu 
\\
& =
\sum_{ k \geq 0 } (uv)^k \sum_{\mu \subset_k \lambda} c_{\lambda,\mu}^{(p)} \left(\prod_{ \substack{ \square \in \mu \\ \bar{c}_\square = s}} \frac{1}{1-v^{-1} \chi_\square }  \right)
\left(\prod_{ \substack{ \square \in \lambda \\ \bar{c}_\square = s}}  {1-v^{-1} \chi_\square }  \right)
\frac{T_\mu^{(s)}}{T_\lambda^{(s)}}
H_\mu 
\\
& =   \nabla^{(s)} \Delta_{-1/v}^{(s)}   \TT^{(p)}_{uv} \Delta_{1/v}^{(s)} {\nabla^{(s)}}^{-1} H_\lambda.
\end{align*}
We then have
\begin{align*}
\TT_{u}^{(p)} \nabla^{(s)} \TT_{uv}^{(p)} {\nabla^{(s)}}^{-1} 
= \nabla^{(s)} \Delta_{-1/v}^{(s)}   \TT^{(p)}_{u v} \Delta_{1/v}^{(s)} {\nabla^{(s)}}^{-1},
\end{align*}
or rather,
\begin{align*}
{\nabla^{(s)}}^{-1}\TT_{u}^{(p)} \nabla^{(s)} 
=  \Delta_{-1/v}^{(s)}   \TT^{(p)}_{uv} \Delta_{1/v}^{(s)} \TT_{-uv}^{(p)} .
\end{align*}
Setting $v \rightarrow 1/v$, then setting $u \rightarrow uv$, we get the following version for the five-term relation, which, besides the colors, looks like the original version of Garsia and Mellit.
\begin{thm} For any colors $p$ and $s$,
 \[
{\nabla^{(s)}}^{-1} \TT^{(p)}_{uv} 
{\nabla^{(s)}} 
=  \Delta_{-v }^{(s)}  \TT_{u}^{(p)}
 \Delta_{v }^{(s)} \TT_{-u}^{(p)} .
    \]
    \end{thm}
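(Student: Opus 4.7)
My plan is to derive this identity as a direct reformulation of Theorem \ref{thm:dual-five-term-relation}, trading the starred operators ${}^\ast\Delta_v^{(s)}$ for the unstarred $\Delta_v^{(s)}$. I will first rewrite Theorem \ref{thm:dual-five-term-relation} by multiplying by $\TT_u^{(p)}$ on the left to obtain
\[
\TT_u^{(p)} \nabla^{(s)} \TT_{uv}^{(p)} {\nabla^{(s)}}^{-1} = {}^\ast\Delta_{-v}^{(s)} \TT_u^{(p)} {}^\ast\Delta_v^{(s)},
\]
and then evaluate both sides on $H_\lambda \otimes e^{\core(\lambda)}$, using the formula ${}^\ast\Delta^{(s)}_v H_\lambda = \prod_{\square \in \lambda,\, \bar c_\square = s}(1-v\chi_\square^{-1}) \cdot H_\lambda$ (up to the core contribution inherited from the definition of $(D_\lambda/M_0)^{(s)}_\ast$) together with the dual Pieri expansion of $\TT_u^{(p)}$ as a generating series in $k$ of sums $\sum_{\mu \subset_k \lambda} c_{\lambda,\mu}^{(p)} H_\mu$.

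The crux is to rewrite each character product via the elementary identity $1 - v\chi^{-1} = -v\chi^{-1}(1 - v^{-1}\chi)$. Applying this on both the $\lambda$-side and the $\mu$-side, the signs and monomials accumulated across the two products collapse, using $\core(\mu)=\core(\lambda)$ from Proposition \ref{CoreQuotDec}(4), into a ratio $v^{d_s(\lambda)-d_s(\mu)} T_\mu^{(s)}/T_\lambda^{(s)}$. Since $d_s(\lambda)-d_s(\mu) = k$ for $\mu \subset_k \lambda$, the resulting $v^k$ combines with $u^k$ to produce $(uv)^k$, while the ratio of $T^{(s)}$'s realizes conjugation by $\nabla^{(s)}$; the residual products in the variable $v^{-1}$ are exactly $\Delta_{\pm 1/v}^{(s)}$ by comparison with the defining formula $\Delta_v^{(s)} H_\lambda = \Omega[-v(D_\lambda/M_0)^{(s)}] H_\lambda$. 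After cancelling $\TT_u^{(p)} \nabla^{(s)}$ from the left and $\TT_{uv}^{(p)}$ from the right, this yields the intermediate identity
\[
{\nabla^{(s)}}^{-1} \TT_u^{(p)} \nabla^{(s)} = \Delta_{-1/v}^{(s)} \TT_{uv}^{(p)} \Delta_{1/v}^{(s)} \TT_{-uv}^{(p)}.
\]

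Finally, I will substitute $v \mapsto 1/v$ and then $u \mapsto uv$; these substitutions are purely formal and produce the stated identity. The main obstacle will be the bookkeeping in the middle step: $T_\lambda^{(s)}$ is defined as a product over $\lambda/\core(\lambda)$, whereas the raw product coming from ${}^\ast\Delta^{(s)}_v H_\lambda$ ranges over all color-$s$ boxes of $\lambda$. Verifying that the core contributions cancel between the $\lambda$- and $\mu$-products, tracking the exact power of $v$ that emerges, and recognizing the residual structure as $\nabla^{(s)}$-conjugation of a pair of unstarred Delta operators is where care is required; once that identification is made, the rest of the argument is routine.
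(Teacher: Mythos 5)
Your sketch follows the paper's own proof step for step: rewrite Theorem \ref{thm:dual-five-term-relation} as $\TT_u^{(p)}\nabla^{(s)}\TT_{uv}^{(p)}{\nabla^{(s)}}^{-1}={}^\ast\Delta_{-v}^{(s)}\TT_u^{(p)}{}^\ast\Delta_v^{(s)}$, act on $H_\lambda\otimes e^{\core(\lambda)}$, convert the products $\prod(1-v\chi_\square^{-1})$ into $\prod(1-v^{-1}\chi_\square)$ so that the sign/monomial overflow collapses (using $\core(\mu)=\core(\lambda)$) into $v^{d_s(\lambda)-d_s(\mu)}\,T_\mu^{(s)}/T_\lambda^{(s)}$, absorb the resulting $v^k$ into $(uv)^k$, recognize the remainder as $\nabla^{(s)}\Delta_{-1/v}^{(s)}\TT_{uv}^{(p)}\Delta_{1/v}^{(s)}{\nabla^{(s)}}^{-1}$, and then substitute $v\mapsto1/v$ and $u\mapsto uv$. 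The only point I would tighten is the phrase about ``cancelling $\TT_u^{(p)}\nabla^{(s)}$ from the left'': one actually conjugates the operator identity by $\nabla^{(s)}$ and then right-multiplies by $\TT_{-uv}^{(p)}$, which is what produces the intermediate identity you display; the content is the same as in the paper.
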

By taking coefficients as we did in proving Theorem \ref{thm:star-dual-pieri-formula}, we get the following formula for the dual Pieri coefficients.
    \begin{thm}\label{thm:dual-pieri-formula} Let $k$ and $l$ be positive integers. For any colors $p$ and $s$, and for $\lambda \subset_{k+l} \mu$,
    \begin{align*}
     c_{\lambda,\mu}^{(p)} =  \frac{(-1)^k}{e_k \left[ B^{(s)}_{\mu/\lambda}  \right]}  \sum_{\lambda \subset_k \nu \subset_l \mu} 
     T_{\nu/\lambda}^{(s)}
     c_{\lambda,\nu}^{(p)} c_{\nu,\mu}^{(p)}  .
    \end{align*}
    In particular, this formula is independent of the choice of $s$. For the base case, when $\lambda \subset_1 \mu$, we have the formula in Proposition \ref{Deg1CLem}.
\end{thm}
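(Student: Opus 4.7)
Since $\TT^{(p)}_u$ is invertible with inverse $\TT^{(p)}_{-u}$, I would first rewrite the five-term relation stated immediately before the theorem by right-multiplying by $\TT^{(p)}_u$:
\[
{\nabla^{(s)}}^{-1}\, \TT^{(p)}_{uv}\, \nabla^{(s)}\, \TT^{(p)}_u \;=\; \Delta^{(s)}_{-v}\, \TT^{(p)}_u\, \Delta^{(s)}_v.
\]
The idea is then to apply both sides to $H_\mu \otimes e^{\core(\mu)}$ and compare the coefficient of $u^n v^l\, H_\lambda$ for $\lambda \subset_n \mu$, following the exact pattern used for Theorem \ref{kPieri} but with translation operators in place of multiplication operators.

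For the left-hand side, I would process the operators from right to left, using the dual Pieri rule $h_k^\perp[X^{(p)}] H_\mu = \sum_{\nu \subset_k \mu} c_{\nu,\mu}^{(p)} H_\nu$ and the eigenvalue property $\nabla^{(s)} H_\nu = T^{(s)}_\nu H_\nu$. One application of $\TT^{(p)}_u$ produces $\sum_k u^k \sum_{\nu \subset_k \mu} c_{\nu,\mu}^{(p)} H_\nu$, then $\nabla^{(s)}$ multiplies by $T^{(s)}_\nu$, then $\TT^{(p)}_{uv}$ yields a further Pieri expansion with weight $(uv)^l$, and finally ${\nabla^{(s)}}^{-1}$ divides by $T^{(s)}_\lambda$. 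The outcome is the double sum
\[
\sum_{k,l \ge 0} u^{k+l} v^l \sum_{\lambda \subset_l \nu \subset_k \mu} T^{(s)}_{\nu/\lambda}\, c_{\nu,\mu}^{(p)}\, c_{\lambda,\nu}^{(p)}\, H_\lambda.
\]

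For the right-hand side, I would use \eqref{epsilonDM} to replace $\Delta^{(s)}_v H_\mu$ with the scalar multiple $\Omega[-v (D_\mu/M_0)^{(s)}] H_\mu$, push that scalar through $\TT^{(p)}_u$, apply the dual Pieri rule to obtain $\sum_n u^n \sum_{\lambda \subset_n \mu} c_{\lambda,\mu}^{(p)} H_\lambda$, and finally apply $\Delta^{(s)}_{-v}$ eigenvalue-wise. The two $\Delta^{(s)}$-scalars then telescope, using $D_\mu - D_\lambda = M_0(B_\mu - B_\lambda)$, into
\[
\Omega\!\left[-v\, B^{(s)}_{\mu/\lambda}\right] \;=\; \prod_{\substack{\square \in \mu/\lambda \\ \bar{c}_\square = s}} (1 - v\chi_\square) \;=\; \sum_{l \ge 0} (-v)^l\, e_l\!\left[B^{(s)}_{\mu/\lambda}\right].
\]

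Equating the coefficient of $u^n v^l H_\lambda$ on both sides gives
\[
\sum_{\lambda \subset_l \nu \subset_{n-l} \mu} T^{(s)}_{\nu/\lambda}\, c_{\nu,\mu}^{(p)}\, c_{\lambda,\nu}^{(p)} \;=\; (-1)^l\, e_l\!\left[B^{(s)}_{\mu/\lambda}\right] c_{\lambda,\mu}^{(p)}.
\]
Solving for $c_{\lambda,\mu}^{(p)}$ and relabelling $l \mapsto k$, $n-l \mapsto l$ yields the stated formula. The steps are essentially mechanical once the five-term relation is in hand; the only point requiring genuine care is the telescoping of the two $\Delta^{(s)}$-scalars on the right-hand side, which is what produces the elementary symmetric function in the denominator and which makes the final answer independent of the auxiliary color $s$.
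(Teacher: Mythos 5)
Your proposal is correct and follows essentially the same approach as the paper, which simply references the computation done for the starred version in Theorem \ref{thm:star-dual-pieri-formula}: apply the five-term relation to $H_\mu\otimes e^{\core(\mu)}$, expand the left side via the eigenvalue property of $\nabla^{(s)}$ and the dual Pieri expansion of the translation operators, telescope the two $\Delta^{(s)}$-scalars on the right into $\Omega[-v\,B^{(s)}_{\mu/\lambda}]$, and match coefficients. The only cosmetic difference is that the paper first substitutes $v\mapsto v/u$ before clearing the $\TT^{(p)}_{-u}$ factor (so it compares coefficients of $u^l v^k$), whereas you clear $\TT^{(p)}_{-u}$ directly and compare coefficients of $u^n v^l$; the two bookkeepings are equivalent.
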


\begin{prop}\label{Deg1CLem}
For $\lambda\subset_1\mu$, let $\square_i$ be the box in  $\mu/\lambda$ of color $i$, and let $\chi_i=\chi_{\square_i}$.
We then have
\begin{align}
c_{\lambda,\mu}^{(p)}
&= \frac{1}{1-qt}
\left(\frac{\chi_p}{\chi_0}\right)
\prod_{i\in\ZZ/r\ZZ} 
\frac{\displaystyle\prod_{\square\in A_i(\lambda)}  \left(1-qt\frac{\chi_i}{\chi_\square}\right)}
{\displaystyle\prod_{\square\in R_i(\lambda)} \left( 1-\frac{\chi_i}{\chi_\square} \right)}.
\label{1PieriDual}
\end{align}
\end{prop}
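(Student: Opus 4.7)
The proposition gives the base case $\lambda\subset_1\mu$ of the dual Pieri recursion in Theorem \ref{thm:dual-pieri-formula}, so it cannot be derived from the recursion itself and requires an independent calculation. My plan is to reduce it to the already-established formula in Proposition \ref{Deg1DLem} by exploiting the star-pairing adjointness. The biorthogonality $\langle H_\lambda^\dagger, H_\nu\rangle_\ast = N_\lambda \delta_{\lambda,\nu}$ yields the two pairing expressions
\[
c_{\lambda,\mu}^{(p)} = \frac{1}{N_\lambda}\bigl\langle H_\lambda^\dagger,\, h_1^\perp[X^{(p)}]\, H_\mu\bigr\rangle_\ast
\quad\text{and}\quad
d_{\mu,\lambda}^{(p)} = \frac{1}{N_\mu}\Bigl\langle H_\mu^\dagger,\, e_1\!\Bigl[\tfrac{\epsilon_p X^\bullet}{M^T}\Bigr] H_\lambda\Bigr\rangle_\ast,
\]
so the crux is to identify the star-adjoint of $h_1^\perp[X^{(p)}]$ as being proportional to $e_1[\epsilon_p X^\bullet/M^T]$.

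To compute that adjoint, I would combine the ordinary Hall adjointness $(h_1[X^{(p)}])^\perp = h_1^\perp[X^{(p)}]$ with the two substitutions $X^\bullet\mapsto -\iota X^\bullet$ and $X^\bullet\mapsto M^T X^\bullet$ that appear in $\langle\cdot,\cdot\rangle_\ast$. A direct plethystic calculation should show that the star-adjoint of $h_1^\perp[X^{(p)}]$ equals, up to an overall factor $\chi_p/\chi_0$ and the color reversal coming from $\iota$, the multiplication operator $e_1[\epsilon_p X^\bullet/M^T]$. In particular, the prefactor $\chi_p/\chi_0$ in the claimed formula is already predicted at this stage. This gives an identity of the form
\[
c_{\lambda,\mu}^{(p)} = \Phi(q,t)\,\frac{\chi_p}{\chi_0}\cdot\frac{N_\mu}{N_\lambda}\cdot d_{\mu,\lambda}^{(p)}
\]
for a specific scalar $\Phi(q,t)$ produced by the adjointness calculation, after which one substitutes Proposition \ref{Deg1DLem} for $d_{\mu,\lambda}^{(p)}$.

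The final step is to compute $N_\mu/N_\lambda$ in closed form for $\lambda\subset_1\mu$ and to verify the telescoping with the $R_i(\mu)/A_i(\mu)$ products from Proposition \ref{Deg1DLem}. The expected telescoping is that hook-length factors in $N_\mu/N_\lambda$ associated with corners of $\mu$ and $\lambda$ that do not change between the two partitions cancel against corresponding factors in the $d$-formula, while the factors at the added cells convert the $q^{-1}t^{-1}\chi_i/\chi_\square$ appearing in $d_{\mu,\lambda}^{(p)}$ into the $qt\,\chi_i/\chi_\square$ appearing in $c_{\lambda,\mu}^{(p)}$. The main obstacle will be precisely this combinatorial bookkeeping: since $N_\nu$ involves only hook lengths divisible by $r$, adding one cell of each color affects many such hooks in a non-local way, and identifying which contributions survive and which cancel --- and how the residual products reorganize into the claimed $A_i(\lambda)/R_i(\lambda)$ form --- is the substantial part of the argument that genuinely distinguishes the wreath setting from the single-partition case.
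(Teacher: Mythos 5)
Your overall plan of reducing the $c$-coefficient to the $d$-coefficient via star-adjointness and a ratio $N_\mu/N_\lambda$ is in the right spirit, but it has a genuine gap at the adjointness step. The star-adjoint of $h_1^\perp[X^{(p)}]$ is precisely $-e_1[\epsilon_{-p}X^\bullet/M^T]$ --- no factor of $\chi_p/\chi_0$ appears at this stage, since an adjoint is a clean operator identity that cannot depend on a particular box. More importantly, after applying the adjoint in
$c_{\lambda,\mu}^{(p)} = \tfrac{1}{N_\lambda}\langle -e_1[\epsilon_{-p}X^\bullet/M^T]H_\lambda^\dagger, H_\mu\rangle_\ast$,
the Pieri operator now acts on $H_\lambda^\dagger$, so what you read off is $\tfrac{N_\mu}{N_\lambda}$ times the Pieri coefficient $d_{\mu,\lambda}^{\dagger(p)}$ for the dagger basis, \emph{not} the coefficient $d_{\mu,\lambda}^{(p)}$ from Proposition \ref{Deg1DLem}. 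Your proposed identity $c_{\lambda,\mu}^{(p)} = \Phi(q,t)\,\tfrac{\chi_p}{\chi_0}\cdot\tfrac{N_\mu}{N_\lambda}\cdot d_{\mu,\lambda}^{(p)}$ is false for every scalar $\Phi$, because $d^{\dagger}$ is not a scalar multiple of $d$: it is (up to a power of $qt$) the image of $d$ under $(q,t)\mapsto(q^{-1},t^{-1})$, and the product formulas are visibly inequivalent under any overall rescaling.

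The missing ingredient is the ``downarrow'' involution $\downarrow F[X^\bullet;q,t]\otimes e^\alpha = F[-\iota X^\bullet;q^{-1},t^{-1}]\otimes e^{w_0\alpha}$, which intertwines $H_\lambda^\dagger$ with $H_\lambda$ and the multiplication operator $e_1[\epsilon_{-p}X^\bullet/M^T]$ with $-qt\,e_1[\epsilon_p X^\bullet/M^T]$. Applying $\downarrow$ to the defining equation of $d^{\dagger(p)}_{\mu,\lambda}$ yields $d^{\dagger(p)}_{\mu,\lambda} = (qt)^{-1}\bigl(d^{(p)}_{\mu,\lambda}\bigr)\big|_{(q,t)\to(q^{-1},t^{-1})}$, and it is precisely this substitution --- not the adjoint and not the hook-ratio $N_\mu/N_\lambda$ --- that converts the $q^{-1}t^{-1}\,\chi_i/\chi_\square$-type binomials into $qt\,\chi_\square/\chi_i$-type binomials and turns $\chi_0/\chi_p$ into $\chi_p/\chi_0$. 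The role of $N_\mu/N_\lambda$ is different from what you describe: via the telescoping lemma (Lemma \ref{HookLem} in the paper, applied iteratively one added box at a time with a delicate case analysis) it converts the index set from removable/addable corners of $\mu$ to those of $\lambda$ and flips $\chi_\square/\chi_i$ to $\chi_i/\chi_\square$, but it does not perform a $(q,t)$-inversion. Once you insert the $\downarrow$ step your outline matches the paper's proof; without it the reduction does not close.
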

We will prove \eqref{1PieriDual} in the appendix.

\subsection{Another evaluation formula}
In this section, we will be concerned with the evaluation $H_\lambda[-\epsilon_0]$. As mentioned after Corollary \ref{cor:emptycore_evaluation}, it is not clear from reciprocity how one can attain this value when $\core(\lambda)$ is not empty. However, we can get a tableau formula using the Pieri coefficients.

Let ${c'}_{\lambda,\mu}^{(p)}$ denote the dual Pieri coefficients one gets from
\[
e_k^{\perp}[X^{(p)}] H_\mu
= \sum_{\lambda \subset_k \mu} 
{c'}_{\lambda,\mu}^{(p)} H_\lambda.
\]
Then, for any $\mu$,
\[
H_\mu[-\epsilon_0] = (-1)^n e_n^\perp[X^{(0)}] H_\mu
 = (-1)^n {c'}_{\core(\mu),\mu}^{(0)}.
\]

The inverse to the dual five-term relation can be written as
\[
{\nabla^{(s)}}^{-1} \TT^{(p)}_{-uv} 
{\nabla^{(s)}} 
= \TT_{u}^{(p)} \Delta_{-v }^{(s)}  \TT_{-u}^{(p)}
 \Delta_{v }^{(s)}  .
\]
Rewriting this as
\[
\TT_{-u}^{(p)} {\nabla^{(s)}}^{-1} \TT^{(p)}_{-v} 
{\nabla^{(s)}} 
=  \Delta_{-v/u }^{(s)}  \TT_{-u}^{(p)}
 \Delta_{v/u }^{(s)}
\]
and applying the left-hand side to $H_\mu \otimes e^{\alpha}$ with $\alpha = \core(\mu)$ yields 
\begin{align*}
    \TT_{-u}^{(p)} {\nabla^{(s)}}^{-1} \TT^{(p)}_{-v} 
{\nabla^{(s)}}  H_\mu \otimes e^{\core(\mu)} \Big|_{u^k v^l}
&
= (-1)^{k+l}
\sum_{\lambda \subset_k \nu \subset_l \mu}
{c'}_{\lambda,\nu}^{(p)} {c'}_{\nu,\mu}^{(p)} \frac{T_\mu^{(s)}}{T_\nu^{(s)}}
H_\lambda \otimes e^{\alpha}.
\end{align*}
The right-hand side, instead, gives
\begin{align*}
    \Delta_{-v/u }^{(s)}  \TT_{-u}^{(p)}
 \Delta_{v/u }^{(s)} H_\mu \otimes e^{\alpha}
 & =
 (-1)^{k+l}
 \sum_{\lambda \subset_{k+l} \mu} {c'}_{\lambda,\mu}^{(p)} 
 \Omega\left[ -v( B_\mu - B_\lambda)^{(s)} \right]\Big|_{v^l}
 H_\lambda \otimes e^\alpha
 \\
 & = (-1)^k e_l \left[ B_{\mu/\lambda}^{(s)} \right]
  \sum_{\lambda \subset_{k+l} \mu} {c'}_{\lambda,\mu}^{(p)} 
 H_\lambda \otimes e^\alpha.
\end{align*}
Equating the coefficients of $H_\lambda \otimes e^\alpha$, we get the following recursive formula.
\begin{thm}
    For any positive integers $k,l$, and $\lambda\subset_{k+l} \mu$, 
\begin{align*}
    {c'}_{\lambda,\mu}^{(p)}
= \frac{(-1)^l}{e_l \left[ B_{\mu/\lambda}^{(s)} \right]}
\sum_{\lambda \subset_k  \nu \subset_l \mu}
{c'}_{\lambda,\nu}^{(p)} {c'}_{\nu,\mu}^{(p)} T_{\mu/\nu}^{(s)}.
\end{align*}
The base case, when $\lambda \subset_1 \mu$, is computed via Proposition \ref{Deg1CLem}.
\end{thm}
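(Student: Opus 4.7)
The plan is to follow the same two-sided expansion strategy already used for Theorem \ref{kPieri} and Theorem \ref{thm:dual-pieri-formula}. I would start from the inverse of the dual five-term relation
\[
{\nabla^{(s)}}^{-1}\TT^{(p)}_{-uv}\,\nabla^{(s)} = \TT_{u}^{(p)}\,\Delta_{-v}^{(s)}\,\TT_{-u}^{(p)}\,\Delta_{v}^{(s)},
\]
premultiply by $\TT_{-u}^{(p)}$, and rescale $v\mapsto v/u$ to obtain the more symmetric form
\[
\TT_{-u}^{(p)}\,{\nabla^{(s)}}^{-1}\,\TT^{(p)}_{-v}\,\nabla^{(s)} = \Delta_{-v/u}^{(s)}\,\TT_{-u}^{(p)}\,\Delta_{v/u}^{(s)}.
\]
Applying both sides to $H_\mu\otimes e^{\core(\mu)}$ and equating the coefficient of $u^kv^l\,H_\lambda$ will produce the recursion.

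On the left, I iterate through the operators one at a time. The outer $\nabla^{(s)}$ contributes the eigenvalue $T^{(s)}_\mu$, the intermediate ${\nabla^{(s)}}^{-1}$ contributes $1/T^{(s)}_\nu$ on each intermediate $H_\nu$, and each translation $\TT^{(p)}_{-\bullet}$ expands as $\sum_j (-\bullet)^j e_j^\perp[X^{(p)}]$, whose action on a wreath Macdonald polynomial is given by the ${c'}^{(p)}_{\cdot,\cdot}$. Multiplying everything together, the $u^kv^l$-coefficient on the left collects into
\[
(-1)^{k+l}\sum_{\lambda\subset_k\nu\subset_l\mu}{c'}^{(p)}_{\lambda,\nu}\,{c'}^{(p)}_{\nu,\mu}\,T^{(s)}_{\mu/\nu}\,H_\lambda.
\]
On the right, the flanking $\Delta^{(s)}$ operators are diagonal on each $H_\bullet$ eigenbasis, so after passing $\TT^{(p)}_{-u}$ through them the result is
\[
\sum_{k'\ge 0}(-u)^{k'}\sum_{\lambda\subset_{k'}\mu}{c'}^{(p)}_{\lambda,\mu}\,\Omega\!\left[-(v/u)\,B^{(s)}_{\mu/\lambda}\right]H_\lambda,
\]
using $(D_\mu-D_\lambda)/M_0=B_{\mu/\lambda}$. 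Since $\Omega[-zY]=\sum_j(-z)^j e_j[Y]$, extracting the $u^kv^l$-coefficient forces $k'=k+l$ and picks out $(-v/u)^l e_l[B^{(s)}_{\mu/\lambda}]$, which after combining with $(-u)^{k+l}$ collapses to $(-1)^k e_l[B^{(s)}_{\mu/\lambda}]\,{c'}^{(p)}_{\lambda,\mu}\,H_\lambda$.

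Equating the two expressions and solving for ${c'}^{(p)}_{\lambda,\mu}$ produces the stated formula, and the recursion is closed by the degree-one evaluation in Proposition \ref{Deg1CLem}. The only real obstacle is bookkeeping: one must correctly account for the conversion $\TT^{(p)}_{-u}G=\sum_k(-u)^ke_k^\perp[X^{(p)}]G$ (so that each $e^\perp$ comes with a sign) and carefully match $u$-degrees on the right, where the negative $u$-powers from the $(v/u)$ series combine with the positive $u$-powers from $\TT^{(p)}_{-u}$. Once these signs and degrees are tracked, the identification $T^{(s)}_{\mu/\nu}=T^{(s)}_\mu/T^{(s)}_\nu$ and the equality of coefficients are immediate, and the independence of the result from the auxiliary color $s$ follows from the fact that both sides of the five-term relation are defined for any $s$.
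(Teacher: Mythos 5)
Your argument is correct and essentially reproduces the paper's own proof: both start from the inverse of the dual five-term relation, premultiply by $\TT_{-u}^{(p)}$ and rescale $v\mapsto v/u$ to get $\TT_{-u}^{(p)}{\nabla^{(s)}}^{-1}\TT^{(p)}_{-v}\nabla^{(s)}=\Delta_{-v/u}^{(s)}\TT_{-u}^{(p)}\Delta_{v/u}^{(s)}$, apply both sides to $H_\mu\otimes e^{\core(\mu)}$, and extract the $u^kv^l$-coefficient of $H_\lambda$. The sign and degree bookkeeping you outline (the $(-1)^{k+l}$ on the left, the forced $k'=k+l$, $j=l$ on the right, and the simplification $(D_\mu-D_\lambda)/M_0=B_{\mu/\lambda}$ followed by $(-u)^{k+l}(-v/u)^l=(-1)^ku^kv^l$) is exactly what the paper does, so no further comparison is needed.
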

As a consequence, one can get a formula for $H_\mu[-\epsilon_0] = (-1)^n {c'}_{\core(\mu),\mu}^{(0)}$.

\section{The $\mathbb{D},\mathbb{D}^\ast$-operators and wreath Theta operators}
In \cite{RW}, we define the following series of operators:
\begin{align*}
\DD & \coloneq
(1-qt)^{r-1}
 \Omega\left[ \sum_{i \in \ZZ/r\ZZ} q \frac{z_{i+1}}{z_i} + t \frac{z_i}{z_{i+1}} \right]
\prod_{i\in\ZZ/r\ZZ} \PP^{(i)}_{-z_i}
\prod_{i\in\ZZ/r\ZZ} z_{i}^{-C_{i}}\TT^{(i)}_{z_i^{-1}M^T},\\
\mathbb{D}^\ast&\coloneqq(1-q^{-1}t^{-1})^{r-1}
\Omega\left[\sum_{i \in \ZZ/r\ZZ} t^{-1} \frac{z_{i+1}}{z_i} + q^{-1} \frac{z_i}{z_{i+1}} \right]
\prod_{i \in \ZZ/r\ZZ} \PP_{z_i}^{(i)}
\prod_{i \in \ZZ/r \ZZ} z_i^{C_i} \TT_{-M_\ast/z_i}^{(i)}.
\end{align*}
where $M_\ast = (1-q^{-1}\sigma )(1-t^{-1}\sigma^{-1})$ and 
\[z_0^{-C_0}\cdots z_{r-1}^{-C_{r-1}} e^\alpha = z_0^{ c_0 - c_{r-1}} z_1^{ c_1 - c_0 } \cdots z_{r-1}^{c_{r-1} - c_{r-2} }~~~ \text{with $c_i - c_{i-1} = \#R_i(\alpha) - \#A_i(\alpha) +\delta_{i,0}$}.
\]
For $\mathbb{D}$, we expand all rational functions into Laurent series assuming
\begin{equation*}
    \begin{aligned}
        |z_i|&=1,&
        |q|,|t|&<1.
    \end{aligned}
\end{equation*}
On the other hand, for $\mathbb{D}^\ast$, we expand the rational functions into Laurent series assuming
\begin{equation*}
    \begin{aligned}
        |z_i|&=1,&
        |q|,|t|&>1.
    \end{aligned}
\end{equation*}
For a given vector $\vec{k} = (k_0,\dots, k_{r-1})$, one can then define
\begin{align*}
\DD_{\vec{k}} f \otimes e^{\alpha} = z_0^{k_0} \cdots z_{r-1}^{k_{r-1}} \DD f  \otimes e^{\alpha} \Big|_{z^0} &&\text{ and } &&
\DD_{\vec{k}}^\ast f \otimes e^{\alpha} = z_0^{k_0} \cdots z_{r-1}^{k_{r-1}} \DD^\ast f  \otimes e^{\alpha} \Big|_{z^0},
\end{align*}
where $|_{z^0}$ denotes taking the constant term with respect to the $z_i$ variables.
We will also write $\vec{k}$ as $\epsilon_0 k_0+\cdots +\epsilon_{r-1} k_{r-1}$.

The following are some properties of these operators.
\begin{prop}[\cite{RW}] \label{DoperatorProperties}
Let $\tilde{e}_1^{(i)}=e_1[ \epsilon_i (X^\bullet/M^T)],$ and let $\hat{e}_1^{(i)}=e_1[ \epsilon_i (X^\bullet/M)].$
\begin{enumerate}
\item We have $\DD_{\vec{0}} = \hat{e}_1^{(0)}[-D]$. Additionally,
\begin{align*}
  \mathbb{D}_{\epsilon_p - \epsilon_0} = \hat{e}_1^{(p)}\left[ -D \right]   && \text{ and } && 
  \mathbb{D}_{\epsilon_0 - \epsilon_p}^\ast = (\hat{e}_1^{(p)}\left[ -D \right])_\ast.
\end{align*}
\item For any vector $\vec{k}=(k_0,\ldots, k_{r-1})\in \ZZ^r$, the following hold:
\begin{align*}
\left[\mathbb{D}_{\vec{k}}, \tilde{e}_1^{(i)}\right] &=\mathbb{D}_{\vec{k}-\epsilon_{i}}, &
\left[\mathbb{D}_{\vec{k}}, e_1^\perp[X^{(i)}]\right]&= \mathbb{D}_{\vec{k}+\epsilon_{i}},&\\
\left[\mathbb{D}_{\vec{k}}^*, \tilde{e}_1^{(i)}\right]&= -qt\mathbb{D}_{\vec{k}-\epsilon_{i}}^*,&
\left[\mathbb{D}_{\vec{k}}^\ast, e_1^\perp[X^{(i)}]\right]&= -\mathbb{D}_{\vec{k}+\epsilon_{i}}^\ast.&
\end{align*}
\item We also have the following relations:
\begin{align*}
\nabla \tilde{e}_1^{(i)}\nabla^{-1}&= \mathbb{D}_{-\epsilon_i} &
\nabla^{-1} e_1^\perp[X^{(i)}]\nabla&=-\mathbb{D}_{\epsilon_i}, & \\
\nabla^{-1} \tilde{e}_1^{(i)}\nabla&= -qt\mathbb{D}^*_{-\epsilon_i},&
\nabla e_1^\perp[X^{(i)}]\nabla^{-1}&=\mathbb{D}_{\epsilon_i}^*.&
\end{align*}
\item We have
\begin{align*}
\mathbb{D}_{\vec{k}}^\dagger = \mathbb{D}_{-\iota\vec{k}} && \text{ and } &&
\left(\mathbb{D}_{\vec{k}}^*\right)^\dagger = (qt)^{-|\vec{k}|}\mathbb{D}_{-\iota\vec{k}}^*
\end{align*}
In particular, 
$\mathbb{D}_{\epsilon_{0}-\epsilon_{p}} = \hat{e}_1^{(-p)}[-D^\dagger]$ and $\mathbb{D}_{\epsilon_{p}-\epsilon_{0}}^* = (\hat{e}_1^{(-p)}[-D^\dagger])_\ast$.
\end{enumerate}
\end{prop}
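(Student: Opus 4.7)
The plan is to prove each of the four items in order, reducing them to the fundamental commutation relation \(\TT^{(i)}_A \PP^{(j)}_B = \Omega[BA_{ji}]\PP^{(j)}_B \TT^{(i)}_A\) proved earlier and the properties of \(\mathsf{V}\) in Proposition \ref{MainPropertiesofV}.

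For item (1), I would expand \(\mathbb{D}\) as an operator-valued formal Laurent series in \(\vec{z}\) and use the commutation relation to push all of the translation factors \(\TT^{(i)}_{z_i^{-1}M^T}\) past the multiplication operators \(\PP^{(j)}_{-z_j}\). Each swap contributes an \(\Omega\)-factor involving a ratio \(z_i/z_j\) and entries of \(M^T\), which combine with the prefactor \(\Omega[\sum_i q z_{i+1}/z_i + t z_i/z_{i+1}]\) and the constant \((1-qt)^{r-1}\) into a rational function whose constant term in \(\vec{z}\), when applied to \(H_\lambda \otimes e^{\core(\lambda)}\), produces a sum over addable and removable cells of \(\lambda\). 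After matching this against \(D^{(0)}_\lambda = qt\sum_{R_0(\lambda)}\chi_\square - \sum_{A_0(\lambda)}\chi_\square\) and its \(M_0^{-1}\)-expansion, the resulting eigenvalue coincides with that of \(\hat{e}_1^{(0)}[-D]\) on \(H_\lambda\). The shifted identities for \(\mathbb{D}_{\epsilon_p-\epsilon_0}\) and \(\mathbb{D}^\ast_{\epsilon_0-\epsilon_p}\) follow by extracting the coefficient of \(z_p/z_0\) in the same expansion (with \(M_\ast\) in place of \(M^T\) in the starred case).

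For item (2), I would commute the multiplication operator \(\tilde{e}_1^{(i)}\) through each building block of \(\mathbb{D}_{\vec{k}}\): it passes freely through the \(\PP\)'s but picks up, under each \(\TT^{(j)}_{z_j^{-1}M^T}\), an extra monomial in \(z_i\) coming from the \(\TT\)-\(\PP\) commutation rule. Collecting these contributions and extracting the appropriate constant term gives precisely \(\mathbb{D}_{\vec{k}-\epsilon_i}\). The adjoint computation for \(e_1^\perp[X^{(i)}]\) yields the opposite shift. The starred versions are treated identically, with the signs \(-qt\) and \(-1\) coming from the replacement of \(M^T\) by \(M_\ast\) in the expansion of \(\mathbb{D}^\ast\).

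Items (3) and (4) are then short consequences. For (3) I would combine (1) with the fact that \(\nabla\) is diagonal on the wreath Macdonald basis to match eigenvalues on both sides; alternatively one applies \(\mathsf{V}\underline{F}=F[\iota D^\dagger]\mathsf{V}\) and its adjoint from Proposition \ref{MainPropertiesofV}. For (4), applying the dagger involution (which sends \(\sigma\mapsto\sigma^{-1}\), inverts \((q,t)\), and conjugates \(\nabla\) to \(\nabla^{-1}\)) to the definitions of \(\mathbb{D}\) and \(\mathbb{D}^\ast\) replaces \(\vec{k}\) by \(-\iota\vec{k}\) and, in the starred case, produces the normalization factor \((qt)^{-|\vec{k}|}\) from renormalizing \(M_\ast\) against \(M^T\). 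I expect the main obstacle to be the explicit residue computation in item (1): this is essentially the wreath Tesler identity from \cite{RW}, and requires careful bookkeeping of the addable and removable corners across all colors, together with the root-lattice shifts encoded by \(z_i^{-C_i}\).
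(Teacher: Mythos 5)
The paper does not prove Proposition~\ref{DoperatorProperties}; it states it with a citation to \cite{RW}, so there is no in-paper argument against which to compare your sketch. Evaluating it on its own terms: the plan for items (1)--(3) is broadly sound. Item (1) is indeed the crux, and your proposal correctly identifies it as the residue/constant-term computation underlying the wreath Tesler identity (the paper's Appendix runs exactly this kind of computation for $\mathbb{D}_{\epsilon_0-\epsilon_p}\mathbb{E}_\lambda$, so that machinery is available). For item (2), the key observation is that $\tilde e_1^{(i)}$ commutes with the $\PP$'s and, under $\TT^{(i)}_{z_i^{-1}M^T}$, shifts by the scalar $z_i^{-1}$ (since $e_1[\epsilon_i \cdot z_j^{-1}M^T\epsilon_j/M^T]=z_i^{-1}\delta_{ij}$), giving $[\mathbb{D},\underline{\tilde e_1^{(i)}}]=z_i^{-1}\mathbb{D}$ and hence the stated commutators after extracting coefficients of $z^{\vec k}$; your wording is compatible with this, though you should make the $\delta_{ij}$ cancellation explicit. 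For (3), ``matching eigenvalues'' needs care since $\tilde e_1^{(i)}$ is not diagonal; what one actually uses is the Pieri expansion together with the identity $T_\mu^{(0)}/T_\lambda^{(0)}=-\left(D_\mu/M_0\right)^{(0)}+\left(D_\lambda/M_0\right)^{(0)}$, turning $\nabla\tilde e_1^{(i)}\nabla^{-1}$ into $[\hat e_1^{(0)}[-D],\tilde e_1^{(i)}]$, exactly as in the paper's Proposition~\ref{prop:nabla_e_D} (with $s=0$). This is consistent with your alternative route.

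Item (4) has a genuine gap. You describe ``the dagger involution'' as the operation sending $\sigma\mapsto\sigma^{-1}$, inverting $(q,t)$, and conjugating $\nabla$ to $\nabla^{-1}$ --- but that is the operator $\downarrow$ of the paper, which is a \emph{conjugation}, not the $\langle-,-\rangle_\ast$-adjoint. The $\dagger$ in $\mathbb{D}_{\vec k}^\dagger$ is the $\ast$-adjoint (this is forced by consistency with the definition of $G[D^\dagger]$ as the operator satisfying $\langle G[D^\dagger]H_\lambda^\dagger, H_\mu\rangle_\ast=\langle H_\lambda^\dagger, G[D]H_\mu\rangle_\ast$), and the $\ast$-adjoint does \emph{not} invert $(q,t)$. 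Indeed, the claimed identity $\mathbb{D}_{\vec k}^\dagger=\mathbb{D}_{-\iota\vec k}$ has the same $(q,t)$ on both sides, so applying $\downarrow$ as you propose would produce an operator with $(q,t)\mapsto(q^{-1},t^{-1})$ and could not give this equality. To prove (4) you instead need to compute the $\ast$-adjoints of the building blocks $\PP^{(i)}_A$ and $\TT^{(i)}_A$ directly (using $\langle F,G\rangle_\ast=\langle F[-\iota X^\bullet],G[M^TX^\bullet]\rangle$, the $\iota$ twist flips colors $i\mapsto -i$ and the $M^T$ reweighting accounts for the $(qt)^{-|\vec k|}$ in the starred case), or else derive (4) from (1)--(3) together with the already-established fact that Delta operators satisfy $F[D]^\dagger=F[D^\dagger]$, rather than via $\downarrow$-conjugation.
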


There are more identities for different colored versions of $\nabla$. For instance, 
\begin{prop} \label{prop:nabla_e_D}
    For colors $s$ and $i$, 
    \[
\nabla^{(s)} \underline{\tilde{e}}_1^{(i)} {\nabla^{(s)}}^{-1} 
= \mathbb{D}_{\epsilon_{s} - \epsilon_0 - \epsilon_i }.
    \]
\end{prop}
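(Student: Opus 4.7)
The plan is to derive the identity by extracting the $uv$-coefficient from both sides of the five-term relation in Theorem \ref{thm:five-term-relation}, specializing the color $p$ there to the $i$ of this proposition. With that substitution the relation reads
\[
\nabla^{(s)}\PP^{(i)}_{-uv/M^T}{\nabla^{(s)}}^{-1} \;=\; \PP^{(i)}_{u/M^T}\,\Delta_v^{(s)}\,\PP^{(i)}_{-u/M^T}\,\Delta_{-v}^{(s)}.
\]
Because $\PP^{(i)}_{-uv/M^T} = \Omega[-uv\,\epsilon_i X^\bullet/M^T]$ depends only on the product $uv$, its expansion is $I - uv\,\underline{\tilde{e}}_1^{(i)} + O((uv)^2)$, so the $uv$-coefficient of the left-hand side is $-\nabla^{(s)}\underline{\tilde{e}}_1^{(i)}{\nabla^{(s)}}^{-1}$.

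For the right-hand side, I would expand each of the four factors to first order,
\[
\PP^{(i)}_{\pm u/M^T} = I \pm u\,\underline{\tilde{e}}_1^{(i)} + O(u^2),\qquad \Delta_{\pm v}^{(s)} = I \mp v\,R^{(s)} + O(v^2),
\]
where $R^{(s)}$ is the operator acting on the wreath Macdonald basis by $R^{(s)}H_\lambda = (D_\lambda/M_0)^{(s)}H_\lambda$; this identification of $\Delta_v^{(s)}$ at linear order uses \eqref{epsilonDM} together with $e_1 = p_1$. Collecting the four bidegree-$(1,1)$ cross-terms, the pairings across positions $\{1,2\}$, $\{1,4\}$, and $\{3,4\}$ contribute $-uv\,\underline{\tilde{e}}_1^{(i)}R^{(s)}+uv\,\underline{\tilde{e}}_1^{(i)}R^{(s)}-uv\,\underline{\tilde{e}}_1^{(i)}R^{(s)}$, while the pairing across positions $\{2,3\}$ gives $uv\,R^{(s)}\underline{\tilde{e}}_1^{(i)}$; the sum collapses to $uv\,[R^{(s)},\underline{\tilde{e}}_1^{(i)}]$. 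Equating with the left-hand side yields
\[
\nabla^{(s)}\underline{\tilde{e}}_1^{(i)}{\nabla^{(s)}}^{-1} \;=\; [\underline{\tilde{e}}_1^{(i)},\, R^{(s)}].
\]

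To identify the right side with $\mathbb{D}_{\epsilon_s - \epsilon_0 - \epsilon_i}$, I would invoke Proposition \ref{DoperatorProperties}: part (1) gives $\mathbb{D}_{\epsilon_s - \epsilon_0} = \hat{e}_1^{(s)}[-D]$, and \eqref{epsilonDM} shows this operator acts on $H_\lambda$ as the scalar $-(D_\lambda/M_0)^{(s)}$, whence $R^{(s)} = -\mathbb{D}_{\epsilon_s - \epsilon_0}$. Substituting and applying the commutator identity $[\mathbb{D}_{\vec{k}},\tilde{e}_1^{(i)}] = \mathbb{D}_{\vec{k}-\epsilon_i}$ from part (2) with $\vec{k} = \epsilon_s - \epsilon_0$ produces the desired $\mathbb{D}_{\epsilon_s - \epsilon_0 - \epsilon_i}$. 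The only delicate step is the sign bookkeeping in the $uv$-extraction; the key observation is that $R^{(s)}$, though diagonal in the Macdonald basis, has eigenvalue depending on $\lambda$ and therefore fails to commute with the multiplication operator $\underline{\tilde{e}}_1^{(i)}$, which is precisely what converts the bidegree-$(1,1)$ contribution to a genuine commutator.
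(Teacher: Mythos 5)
Your proof is correct, but it takes a genuinely different route from the paper's. The paper proves Proposition~\ref{prop:nabla_e_D} by a direct basis computation: it applies $\nabla^{(s)}\tilde{e}_1^{(i)}{\nabla^{(s)}}^{-1}$ to $H_\lambda\otimes e^\alpha$, invokes the degree-one Pieri rule, and then uses the key elementary identity $T_\mu^{(s)}/T_\lambda^{(s)} = -\left(D_\mu/M_0\right)^{(s)} + \left(D_\lambda/M_0\right)^{(s)}$ (valid for $\lambda\subset_1\mu$) to recognize the resulting sum as the action of $[\mathbb{D}_{\epsilon_s-\epsilon_0},\tilde{e}_1^{(i)}]$, which Proposition~\ref{DoperatorProperties}(2) evaluates to $\mathbb{D}_{\epsilon_s-\epsilon_0-\epsilon_i}$. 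You instead extract the bidegree-$(1,1)$ coefficient from the already-established five-term relation (Theorem~\ref{thm:five-term-relation}), obtaining $\nabla^{(s)}\underline{\tilde{e}}_1^{(i)}{\nabla^{(s)}}^{-1}=[\underline{\tilde{e}}_1^{(i)},R^{(s)}]$ with $R^{(s)}=-\mathbb{D}_{\epsilon_s-\epsilon_0}$, and then apply the same commutator identity from Proposition~\ref{DoperatorProperties}. There is no circularity: the five-term relation is proved in Section~\ref{5TermSec} solely from the Pieri rule, the Cauchy kernel, and properties of $\mathsf{V}$, none of which depend on Proposition~\ref{prop:nabla_e_D}. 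Your signs and the cross-term bookkeeping check out (the three $-\underline{\tilde{e}}_1^{(i)}R^{(s)}$ contributions against the single $R^{(s)}\underline{\tilde{e}}_1^{(i)}$ do indeed produce the commutator after negating the left-hand side). What the paper's proof buys is self-containment and a transparent view of the eigenvalue identity driving the result; what yours buys is a quick algebraic consequence that shows the proposition is already encoded in the five-term relation at linear order, with no need to re-derive the Pieri-rule cancellation by hand.
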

\begin{proof}
One sees that for $\core(\lambda) = \alpha$,
\begin{align*}
    \nabla^{(s)} \tilde{e}_1^{(i)} {\nabla^{(s)}}^{-1} H_\lambda \otimes e^{\alpha}
    & =  \nabla^{(s)} \tilde{e}_1^{(i)}   H_\lambda/T^{(s)}_\lambda \otimes e^{\alpha} \\
    & = \nabla^{(s)} \sum_{\lambda \subset_1 \mu} 
     d_{\mu,\lambda}^{(i)} H_\mu/T^{(s)}_\lambda \otimes e^{\alpha} \\
    & = \sum_{\lambda \subset_1 \mu} 
    d_{\mu,\lambda}^{(i)} H_\mu 
    \left( T_\mu^{(s)} / T^{(s)}_\lambda \right) \otimes e^{\alpha} \\
    & = \sum_{\lambda \subset_1 \mu} 
    d_{\mu,\lambda}^{(i)} H_\mu 
    \left( - \left( D_\mu/M_0 \right)^{(s)} + \left( D_\lambda/M_0 \right)^{(s)} \right)  \otimes e^{\alpha} \\
     & =  \left(\mathbb{D}_{\epsilon_{s} - \epsilon_0} \tilde{e}_1^{(i)} - 
\tilde{e}_1^{(i)}\mathbb{D}_{\epsilon_{s} - \epsilon_0}\right) H_\lambda \otimes e^{\alpha} \\
 & = \left [ \mathbb{D}_{\epsilon_{s} - \epsilon_0},  \tilde{e}_1^{(i)}    \right ] H_\lambda \otimes e^{\alpha} \\
 & = \mathbb{D}_{\epsilon_{s} - \epsilon_0 - \epsilon_i } H_\lambda \otimes e^{\alpha}.\qedhere
\end{align*}
\end{proof}
We will also make use of the following fact, seen in Lemma 4.4 in \cite{RW}.
\begin{lem}\label{lemma:PD}
    For a color $p$,
    \[ \PP^{(p)}_{w/M^T} \mathbb{D}= \left(1-\frac{w}{z_p}\right)\mathbb{D}\PP^{(p)}_{w/M^T}.
    \]
\end{lem}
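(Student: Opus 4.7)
The plan is to exploit the explicit product form of $\mathbb{D}$ and check, factor by factor, which pieces commute with $\PP^{(p)}_{w/M^T}$ and which do not. Write
\[
\mathbb{D}=c(q,t,z)\,\Bigl(\prod_{i}\PP^{(i)}_{-z_i}\Bigr)\Bigl(\prod_{i} z_i^{-C_i}\Bigr)\Bigl(\prod_{i}\TT^{(i)}_{z_i^{-1}M^T}\Bigr),
\]
where $c(q,t,z)$ is the scalar prefactor $(1-qt)^{r-1}\,\Omega\bigl[\sum_i q z_{i+1}/z_i + t z_i/z_{i+1}\bigr]$. Since $\PP^{(p)}_{w/M^T}$ is a multiplication operator on $\Lambda^{\otimes r}$, it commutes trivially with $c(q,t,z)$ (a scalar in the auxiliary variables), with each $\PP^{(i)}_{-z_i}$ (multiplication operators commute), and with each $z_i^{-C_i}$ (these act on the tensor factor $\mathbb{C}[Q]$, on which $\PP^{(p)}_{w/M^T}$ is the identity). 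Therefore everything reduces to moving $\PP^{(p)}_{w/M^T}$ past the translation factors $\TT^{(i)}_{z_i^{-1}M^T}$.

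For this I invoke the commutation relation established earlier in Section 3.2, namely $\TT^{(i)}_A \PP^{(j)}_B = \Omega[BA_{ji}]\,\PP^{(j)}_B \TT^{(i)}_A$, which rearranges to
\[
\PP^{(p)}_{w/M^T}\TT^{(i)}_{z_i^{-1}M^T} = \Omega[-BA_{pi}]\,\TT^{(i)}_{z_i^{-1}M^T}\PP^{(p)}_{w/M^T},
\]
with $B=w/M^T$ and $A=z_i^{-1}M^T$. Computing the matrix coefficient via the vector plethysm rules,
\[
BA_{pi} = \epsilon_p\cdot\tfrac{w}{M^T}\cdot z_i^{-1} M^T\epsilon_i\Big|_{\epsilon_p=1} = \tfrac{w}{z_i}\,\epsilon_p\epsilon_i\Big|_{\epsilon_p=1} = \tfrac{w}{z_p}\,\delta_{i,p},
\]
using the cancellation $(M^T)^{-1} M^T = \mathrm{Id}$ and the projector identity $\epsilon_p\epsilon_i = \delta_{p,i}\epsilon_p$. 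Hence the commutation is trivial for $i\neq p$, and for $i=p$ the scalar produced is $\Omega[-w/z_p] = 1-w/z_p$.

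Collecting: sweeping $\PP^{(p)}_{w/M^T}$ from the left of $\mathbb{D}$ all the way to the right acquires exactly one nontrivial factor, $1-w/z_p$, giving the claimed identity. The only real content of the proof is the matrix-coefficient calculation, and the only potential pitfall is bookkeeping with the projector $\epsilon_p\epsilon_i$ and the cancellation $w/M^T\cdot M^T=w$ in the vector-plethystic formalism; both are direct consequences of the definitions in Section 3.2.
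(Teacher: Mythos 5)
Your proof is correct. The paper itself gives no argument for this lemma (it simply cites Lemma 4.4 of [RW]), but your verification is the natural one given the framework of Section 3.2: the only noncommuting factor is the single translation $\TT^{(p)}_{z_p^{-1}M^T}$, the cancellation $BA = w z_i^{-1}(M^T)^{-1}M^T = w/z_i$ collapses the matrix coefficient to $(w/z_p)\delta_{i,p}$, and $\Omega[-w/z_p]=1-w/z_p$ since the argument is a single monomial. The structure and bookkeeping all check out.
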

\subsection{Wreath Theta operators}
We have looked at the operator
\[
\Theta^{(p,s)}(w;v) \coloneqq\Delta_v^{(s)} \PP^{(p)}_{-w/M^T} \Delta_{-v}^{(s)}
\]
in the context of the five-term relations. We define an anaologue of the Theta operator, introduced in \cite{Theta}, as follows. For any $f \in \Lambda^{\otimes r}$ and color $(s)$, we let 
\[
\Theta^{(s)}_f = \Delta_v^{(s)} \underline{f} \Delta_{-v}^{(s)} \Big|_{v =1}.
\]
With this notation, we have
\[
\Theta^{(p,s)}(w;1) = \Theta^{(s)}_{\Omega[-w \epsilon_p (X^{\bullet}/M^T)]} = 
\sum_{ n \geq 0} (-w)^n \Theta^{(s)}_{\tilde{e}_n^{(p)} },
\]
where
\[
\tilde{e}_n^{(p)} = e_n \left[  \epsilon_p \frac{X^{\bullet}}{ M^T}\right].
\]
is a modified elementary symmetric function.

The following relation between Theta operators and the $\mathbb{D}$-operators is an analogue of Conjecture 10.3 in \cite{Theta}, which was proved in \cite{RomeroTheta}.
\begin{thm}
For given colors $p$, $s$, and $i$,
\[
\left[ \Theta^{(s)}_{\tilde{e}^{(p)}_n} , \mathbb{D}_{\epsilon_s-\epsilon_0 - \epsilon_i} \right]
= 
\sum_{k = 1}^{n} (-1)^{k} \mathbb{D}_{\epsilon_s-\epsilon_0-\epsilon_i - k \epsilon_p} \Theta^{(s)}_{\tilde{e}^{(p)}_{n-k}}.
\]
In particular, when $s= 0$, one has
\[
\left[ \Theta^{(0)}_{\tilde{e}^{(p)}_n} , \mathbb{D}_{-\epsilon_i } \right]
= 
\sum_{k = 1}^{n} (-1)^{k} \mathbb{D}_{-\epsilon_i - k \epsilon_p} \Theta^{(0)}_{\tilde{e}^{(p)}_{n-k}}.
\]
\end{thm}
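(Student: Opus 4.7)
The plan is to package the stated identity into a single generating-function statement and then reduce it, via the five-term relation (Theorem~\ref{thm:five-term-relation}) together with Proposition~\ref{prop:nabla_e_D} and Lemma~\ref{lemma:PD}, to a trivial commutator of multiplication operators. First observe that $f\mapsto\Theta^{(s)}_f$ is a ring homomorphism into operators, since $\Delta^{(s)}_v$ and $\Delta^{(s)}_{-v}$ are mutually inverse. In particular,
\[
\Theta^{(p,s)}(w;1) = \Delta_1^{(s)}\,\PP^{(p)}_{-w/M^T}\,\Delta_{-1}^{(s)} = \sum_{n\geq 0}(-w)^n\,\Theta^{(s)}_{\tilde{e}^{(p)}_n}.
\]
Multiplying the target identity by $(-w)^n$ and summing over $n\geq 1$ collapses the right-hand side to $\bigl(\sum_{k\geq 1}w^k\,\mathbb{D}_{\epsilon_s-\epsilon_0-\epsilon_i-k\epsilon_p}\bigr)\Theta^{(p,s)}(w;1)$, so the theorem is equivalent to
\[
\Theta^{(p,s)}(w;1)\,\mathbb{D}_{\epsilon_s-\epsilon_0-\epsilon_i} = \left(\sum_{k\geq 0}w^k\,\mathbb{D}_{\epsilon_s-\epsilon_0-\epsilon_i-k\epsilon_p}\right)\Theta^{(p,s)}(w;1).
\]

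The next step is to recognize the operator-valued coefficient on the right. Since each $\mathbb{D}_{\vec{k}}$ extracts the coefficient of $z^{-\vec{k}}$ from $\mathbb{D}$, and $\PP^{(p)}_{\pm w/M^T}$ is independent of the $z_i$, Lemma~\ref{lemma:PD} rearranges to
\[
\PP^{(p)}_{-w/M^T}\,\mathbb{D}\,\PP^{(p)}_{w/M^T} = \frac{1}{1-w/z_p}\,\mathbb{D} = \sum_{k\geq 0}w^k z_p^{-k}\,\mathbb{D},
\]
and extracting the coefficient of $z_sz_0^{-1}z_i^{-1}$ yields
\[
\PP^{(p)}_{-w/M^T}\,\mathbb{D}_{\epsilon_s-\epsilon_0-\epsilon_i}\,\PP^{(p)}_{w/M^T} = \sum_{k\geq 0}w^k\,\mathbb{D}_{\epsilon_s-\epsilon_0-\epsilon_i-k\epsilon_p}.
\]

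With this identification, the target becomes
\[
\Theta^{(p,s)}(w;1)\,\mathbb{D}_{\epsilon_s-\epsilon_0-\epsilon_i} = \PP^{(p)}_{-w/M^T}\,\mathbb{D}_{\epsilon_s-\epsilon_0-\epsilon_i}\,\PP^{(p)}_{w/M^T}\,\Theta^{(p,s)}(w;1).
\]
Setting $u=w$ and $v=1$ in Theorem~\ref{thm:five-term-relation} gives $\Theta^{(p,s)}(w;1)=\PP^{(p)}_{-w/M^T}\,\nabla^{(s)}\,\PP^{(p)}_{-w/M^T}\,{\nabla^{(s)}}^{-1}$, while Proposition~\ref{prop:nabla_e_D} identifies $\mathbb{D}_{\epsilon_s-\epsilon_0-\epsilon_i}=\nabla^{(s)}\,\underline{\tilde{e}}_1^{(i)}\,{\nabla^{(s)}}^{-1}$. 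Substituting these expressions into both sides, the outer $\PP^{(p)}_{-w/M^T}$ and the conjugations by $\nabla^{(s)}$ cancel, reducing the claim to
\[
\PP^{(p)}_{-w/M^T}\,\underline{\tilde{e}}_1^{(i)} = \underline{\tilde{e}}_1^{(i)}\,\PP^{(p)}_{-w/M^T},
\]
which holds because both sides are multiplication by elements of $\Lambda^{\otimes r}$.

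The main obstacle is bookkeeping: one must track the cancellations carefully and verify that the series manipulations in $z_p$ and $w$ are termwise valid, which is ensured by the Laurent-series expansion conventions on $\mathbb{D}$. The special case $s=0$ then follows immediately from $\epsilon_0-\epsilon_0=0$.
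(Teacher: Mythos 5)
Your argument is correct and takes essentially the same route as the paper: you use Lemma~\ref{lemma:PD} to identify $\PP^{(p)}_{-w/M^T}\,\mathbb{D}_{\epsilon_s-\epsilon_0-\epsilon_i}\,\PP^{(p)}_{w/M^T}$ with the operator series $\sum_{k\geq 0}w^k\,\mathbb{D}_{\epsilon_s-\epsilon_0-\epsilon_i-k\epsilon_p}$, then conjugate through the five-term relation (Theorem~\ref{thm:five-term-relation}) and Proposition~\ref{prop:nabla_e_D} so the whole identity collapses to the fact that two multiplication operators commute. The only difference is cosmetic: the paper carries a general parameter $v$ through the conjugation computation and specializes $v=1$ at the very end, whereas you set $v=1$ from the outset (which the definition of $\Theta^{(s)}_f$ already requires), trimming a small amount of bookkeeping without changing the structure of the argument.
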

\begin{proof}
By applying the five-term relations,  Proposition \ref{prop:nabla_e_D}, and Lemma \ref{lemma:PD},
we get
\begin{align*}
\Theta^{(p,s)}(w;v) 
\mathbb{D}_{\epsilon_s-\epsilon_0 - \epsilon_i} \left( \Theta^{(p,s)}(w;v)\right)^{-1}
& = 
\left(
\Delta_v^{(s)} \PP^{(p)}_{-{w}/{M^T}} \Delta_{-v}^{(s)} \right)
\nabla^{(s)} \tilde{e}_1^{(i)} 
{\nabla^{(s)}}^{-1}
\left(
\Delta_v^{(s)} \PP^{(p)}_{w/M^T} \Delta_{-v}^{(s)} \right)\\
& = \left(
\PP_{-w/M^T}^{(p)}
\nabla^{(s)} \PP_{-wv/M^T}^{(p)} {\nabla^{(s)}}^{-1} \right)
\nabla^{(s)} \tilde{e}_1^{(i)} 
{\nabla^{(s)}}^{-1}\\& \hspace{15em}
\left(\nabla^{(s)} \PP_{wv/M^T}^{(p)} {\nabla^{(s)}}^{-1}\PP_{w/M^T}^{(p)} \right)\\
& = 
\PP_{-w/M^T}^{(p)}
\nabla^{(s)} \PP_{-wv/M^T}^{(p)}   \tilde{e}_1^{(i)} 
 \PP_{wv/M^T}^{(p)} {\nabla^{(s)}}^{-1}\PP_{w/M^T}^{(p)}
 \\
 & = 
\PP_{-w/M^T}^{(p)}
\left(
\nabla^{(s)}    \tilde{e}_1^{(i)} 
{\nabla^{(s)}}^{-1} \right)\PP_{w/M^T}^{(p)}
\\
& = 
\PP_{-w/M^T}^{(p)}
\mathbb{D}_{\epsilon_s-\epsilon_0-\epsilon_i}
\PP_{w/M^T}^{(p)}
\\
& = 
\PP_{-w/M^T}^{(p)}
(z_s/z_0z_i) \mathbb{D}
\PP_{w/M^T}^{(p)} \Big|_{z^0}
\\
& = 
 (1-w/z_{p})^{-1} (z_s/z_0z_i) \mathbb{D} \Big|_{z^0} \\
& = 
\sum_{k \geq 0} w^k \mathbb{D}_{\epsilon_s-\epsilon_0 - \epsilon_i - k \epsilon_p}.
\end{align*}
Therefore,
\[
\Theta^{(p,s)}(w;v) 
\mathbb{D}_{\epsilon_s-\epsilon_0-\epsilon_i} = \sum_{k \geq 0} w^k \mathbb{D}_{\epsilon_s-\epsilon_0-\epsilon_i - k \epsilon_p} \Theta^{(p,s)}(w;v) 
\]
Taking the coefficient of $w^n$ on both sides gives
\[
\Theta^{(s)}_{\tilde{e}^{(p)}_n} \mathbb{D}_{\epsilon_s-\epsilon_0-\epsilon_i}
= \sum_{k=0}^n (-1)^k \mathbb{D}_{\epsilon_s-\epsilon_0-\epsilon_i - k \epsilon_p} \Theta^{(s)}_{\tilde{e}^{(p)}_{n-k}}.\qedhere
\]
\end{proof}
This expression allows one to write Theta operators in terms of $\mathbb{D}$-operators.

\appendix
\section{Degree 1 Pieri rules}
In this appendix, we prove the following formulas for the Pieri coefficients $d_{\mu,\lambda}^{(p)}$ and $c_{\lambda,\mu}^{(p)}$ from Section \ref{5TermSec}:
\begin{thm}
    For $\lambda \subset_1 \mu$, let $\square_i$ be the box in $\mu/\lambda$ of color $i$ and $\chi_i=\chi_{\square_i}$.
We then have
\begin{align}
\label{E1}
d_{\mu,\lambda}^{(p)}
&=\frac{1}{qt-1}\left(\frac{\chi_0}{\chi_p}\right)
\prod_{i\in\ZZ/r\ZZ}
\frac{\displaystyle\prod_{\square\in R_i(\mu)}\left(1-q^{-1}t^{-1}\frac{\chi_i}{\chi_\square}\right)}
{\displaystyle\prod_{\square\in A_i(\mu)}\left(1-\frac{\chi_i}{\chi_\square}\right)},\\
\label{H1}
c_{\lambda,\mu}^{(p)}
&= \frac{1}{1-qt}
\left(\frac{\chi_p}{\chi_0}\right)
\prod_{i\in\ZZ/r\ZZ} 
\frac{\displaystyle\prod_{\square\in A_i(\lambda)}  \left(1-qt\frac{\chi_i}{\chi_\square}\right)}
{\displaystyle\prod_{\square\in R_i(\lambda)} \left( 1-\frac{\chi_i}{\chi_\square} \right)}.
\end{align}
\end{thm}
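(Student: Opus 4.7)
The strategy is to prove \eqref{E1} using the identity $\nabla \tilde{e}_1^{(p)} \nabla^{-1} = \mathbb{D}_{-\epsilon_p}$ from Proposition \ref{DoperatorProperties}(3), and then prove \eqref{H1} in parallel using $\nabla e_1^\perp[X^{(p)}] \nabla^{-1} = \mathbb{D}_{\epsilon_p}^\ast$ from the same proposition. The advantage of this route is that $\mathbb{D}_{\pm\epsilon_p}$ and $\mathbb{D}^\ast_{\pm\epsilon_p}$ are defined by explicit coefficient extractions from the plethystic operators $\mathbb{D}$ and $\mathbb{D}^\ast$, whose formulas already contain all the ingredients appearing in the target product: the translation parameters $z_i^{-1} M^T$, the core-shift monomials $z_i^{\mp C_i}$, and the plethystic factor $\Omega\bigl[\sum_i q z_{i+1}/z_i + t z_i/z_{i+1}\bigr]$.

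For \eqref{E1}, I would apply $\mathbb{D}$ to $H_\lambda \otimes e^{\core(\lambda)}$ by evaluating its factors in order. The translation $\prod_i \TT^{(i)}_{z_i^{-1} M^T}$ transforms $H_\lambda[X^\bullet]$ into $H_\lambda\bigl[X^\bullet + \sum_i z_i^{-1} M^T \epsilon_i\bigr]$; combined with the subsequent $\prod_i \PP^{(i)}_{-z_i}$ multiplication, the core shift, and the global $\Omega$-factor, this yields a Laurent series in the $z_i$ amenable to a partial fraction decomposition. The poles in each $z_i$ sit precisely at the characters $\chi_\square$ for $\square$ on the boundary of $\lambda$ of color $i$. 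Extracting the coefficient of $z_p \prod_{i\neq p} z_i^{0}$, which computes $\mathbb{D}_{-\epsilon_p} H_\lambda$, then selects exactly one residue contribution per tuple of addable corners---one per $\mu$ with $\lambda\subset_1\mu$. The residues at addable corners of color $i$ produce the denominators $\prod_{\square\in A_i(\mu)}(1-\chi_i/\chi_\square)$, while the remaining $\Omega$-factors and the explicit expansion of $M^T$ produce the numerators $\prod_{\square\in R_i(\mu)}(1-q^{-1}t^{-1}\chi_i/\chi_\square)$. The prefactor $(\chi_0/\chi_p)/(qt-1)$ emerges after conjugating back by $\nabla^{-1}$ using $T_\mu^{(0)}/T_\lambda^{(0)} = -\chi_0$, combined with the $(1-qt)^{r-1}$ overall prefactor of $\mathbb{D}$.

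The proof of \eqref{H1} is nearly identical, starting from $\mathbb{D}^\ast_{\epsilon_p}$. The starred operator differs only in replacing $(q,t)$ by $(q^{-1},t^{-1})$ inside translations and the $\Omega$-prefactor, and in reversing the roles of addable and removable corners in the residue computation: addable corners of $\lambda$ now contribute the numerator $\prod_{\square\in A_i(\lambda)}(1-qt\,\chi_i/\chi_\square)$ while removable corners appear in the denominator, with the prefactor $\chi_p/\chi_0$ coming from the reversed $\nabla$-conjugation $\nabla^{-1}\mathbb{D}^\ast_{\epsilon_p}\nabla$ acting on $H_\mu$ (so the factor $T_\lambda^{(0)}/T_\mu^{(0)} = -1/\chi_0$ now appears with $\chi_p$ in its place after the dual specialization).

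The main obstacle is the residue bookkeeping: tracking which $z_i$-poles correspond to addable versus removable corners of the candidate $\mu$ (or $\lambda$), and verifying that the $\Omega\bigl[\sum_i q z_{i+1}/z_i + t z_i/z_{i+1}\bigr]$ prefactor---whose expansion mixes colors and produces cross terms of arbitrary $q$- and $t$-degree---correctly reassembles into the single-color products appearing in \eqref{E1} and \eqref{H1}. This cross-color reorganization is also the mechanism by which the $\chi_0/\chi_p$ versus $\chi_p/\chi_0$ asymmetry between the two formulas ultimately arises, reflecting the $\epsilon_0$-centric normalization imposed by Definition \ref{ModDef}(3).
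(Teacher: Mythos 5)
There is a genuine gap in your proposed route, located precisely at the step you call ``evaluating its factors in order.'' You propose to apply $\mathbb{D}$ directly to $H_\lambda\otimes e^{\core(\lambda)}$ and extract residues, but the translation factor $\prod_i \TT^{(i)}_{z_i^{-1}M^T}$ in $\mathbb{D}$ sends $H_\lambda[X^\bullet]$ to $H_\lambda\bigl[X^\bullet + \sum_i z_i^{-1}M^T\epsilon_i\bigr]$, and there is no closed form for this expression: the $z_i$-dependence of a translated wreath Macdonald polynomial is not a rational function with explicit poles at boundary characters. So ``a Laurent series in the $z_i$ amenable to a partial fraction decomposition'' is exactly what you do \emph{not} get when the argument of $\mathbb{D}$ is $H_\lambda$ rather than a plethystic exponential. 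Your proposal implicitly assumes the translated $H_\lambda$ contributes nothing beyond an overall prefactor, which is false.

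The paper circumvents this by first conjugating by $\mathsf{V}$ (via Proposition \ref{MainPropertiesofV}), under which $\underline{e}_1\bigl[\epsilon_p X^\bullet/M^T\bigr]$ becomes $e_1\bigl[\epsilon_{-p} D^\dagger/M\bigr]=-\mathbb{D}_{\epsilon_0-\epsilon_p}$ acting on $\mathbb{E}_\lambda$ rather than on $H_\lambda$. Since $\mathbb{E}_\lambda$ is itself a plethystic exponential, the translations $\TT^{(i)}_{z_i^{-1}M^T}$ and multiplications $\PP^{(i)}_{-z_i}$ act by explicit exponential factors, and the whole thing collapses to the rational expression \eqref{DSetup}, which is indeed amenable to iterated residue extraction via Lemma \ref{ConstTermLem}. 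That conversion from $H_\lambda$ to $\mathbb{E}_\lambda$ is the essential step your proposal omits. (You would also want $\mathbb{D}_{\epsilon_0-\epsilon_p}$ rather than $\mathbb{D}_{-\epsilon_p}$ here; the two are not the same operator.) For \eqref{H1}, the paper does not redo a residue computation with $\mathbb{D}^\ast$ but rather relates $c_{\lambda,\mu}^{(p)}$ to $d_{\mu,\lambda}^{\dagger(p)}$ by adjunction under $\langle-,-\rangle_\ast$, relates $d^\dagger$ to $d$ by the involution $\downarrow$, and then consolidates the ratio $N_\mu/N_\lambda$ via a careful hook-cancellation argument (Lemma \ref{HookLem}), iterated along a chain $\lambda\subset\lambda(r-1)\subset\cdots\subset\mu$; your parallel $\mathbb{D}^\ast$ computation would hit the identical obstruction as above. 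Finally, even granting the rational form, your sketch does not carry out the residue bookkeeping---the paper's eight-case analysis of adjacencies among $\square_{i-1},\square_i,\square_{i+1}$ is where the product formula actually materializes, and ``the remaining $\Omega$-factors... produce the numerators'' is not a substitute for that argument.
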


\subsection{From Pieri to Delta}
Let $\alpha=\core(\lambda)$.
To compute \eqref{E1}, we will pass things through $\mathsf{V}$ and apply Proposition \ref{MainPropertiesofV}:
\begin{align*}
    \mathsf{V}\left(e_1\left[\epsilon_p\frac{X^\bullet}{M^T}\right]H_\lambda\otimes e^{\alpha}\right)
    &=\mathsf{V}\left(\sum_{\lambda\subset_1\mu}d_{\mu,\lambda}^{(p)}H_\mu\otimes e^{\alpha}\right)\\
    &=\sum_{\lambda\subset_1\mu}T_\mu^{(0)}d_{\mu,\lambda}^{(p)}\mathbb{E}_\mu\otimes e^{\alpha}.
\end{align*}
On the other hand, we have
\begin{align*}
    \mathsf{V}\left(e_1\left[\epsilon_p\frac{X^\bullet}{M^T}\right]H_\lambda\otimes e^\alpha\right)
    &=T_\lambda^{(0)}e_1\left[\epsilon_{-p}\frac{D^\dagger}{M}\right]\mathbb{E}_\lambda\otimes e^\alpha.
\end{align*}
By \cite[Corollary 4.13]{RW}, $\mathrm{span}\left\{\mathbb{E}_\mu\otimes e^\alpha\,|\,\core(\mu)=\alpha\right\}$ is closed under the action of any $f[D^\dagger]$ (we will also see this directly in the course of our computation).
Thus, for some coefficients $\mathfrak{d}_{\mu,\lambda}^{(p)}$,
\[
e_1\left[\epsilon_{-p}\frac{D^\dagger}{M}\right]\mathbb{E}_\lambda\otimes e^\alpha
=\sum_{\core(\mu)=\alpha}\mathfrak{d}_{\mu,\lambda}^{(p)}\mathbb{E}_\mu\otimes e^\alpha.
\]
We then have
\[
d_{\mu,\lambda}^{(p)}=\frac{T^{(0)}_\lambda}{T_\mu^{(0)}}\mathfrak{d}_{\mu,\lambda}=-\chi_0^{-1}\mathfrak{d}_{\mu,\lambda}.
\]

\subsubsection{Applying $\mathbb{D}$}
By Proposition \ref{DoperatorProperties}(4),
\[
-e_1\left[\epsilon_{-p}\frac{D^\dagger}{M}\right]
= \mathbb{D}_{\epsilon_0-\epsilon_p}.
\]
Using the explicit formulas for $\mathbb{D}$ and $\mathbb{E}_\lambda$, we follow similar steps as in the proof of \cite[Lemma 4.12]{RW} to obtain:
\begin{equation}
 \begin{aligned}
   \mathbb{D}_{\epsilon_0-\epsilon_p}\mathbb{E}_\lambda\otimes e^\alpha 
   &=(1-qt)^{r-1}\frac{z_0^2}{z_p}
   \prod_{i\in\ZZ/r\ZZ}\left(\frac{z_{i+1}}{z_{i+1}-tz_{i}}\right)
   \left(\frac{z_i}{z_{i}-qz_{i+1}}\right)
   \frac{\displaystyle \prod_{\square\in R_i(\lambda)}\left(z_i-qt\chi_\square\right)}
   {\displaystyle \prod_{\square\in A_i(\lambda)}\left(z_i-\chi_\square\right)}\\
   &\quad\times
   \Omega\left[\sum_{i\in\ZZ/r\ZZ}X^{(i)}\left(\left(\frac{1}{M_0}\right)^{(i)}-B_\lambda^{(i)} -z_i\right)\right]
   \otimes e^\alpha\Bigg|_{z^0}
\end{aligned}
\label{DSetup}
\end{equation}
where the rational functions are Laurent series expanded assuming
\begin{equation}
    \begin{aligned}
        |z_i|=1&& \text{ and } &&
        |q|,|t|<1.
    \end{aligned}
    \label{Expand}
\end{equation}

\subsubsection{Setup}
We will compute constant terms one variable at a time.
To do so, we will use the following lemma, which is just an application of partial fraction decomposition:
\begin{lem}[\protect{\cite[Lemma 4.1]{RW}}]\label{ConstTermLem}
When we are taking the constant term of an expression
\[
\frac{zF(z)}{(z-P_1)\cdots(z-P_k)}
\]
with respect to the variable $z$, where
\begin{itemize}
\item $F(z)$ is a series in nonnegative powers of $z$,
\item each pole is simple and nonzero, and
\item each pole is expanded as
\[
\frac{1}{z-P_i}=z^{-1}\sum_{n\ge 0}\frac{P_i^n}{z^n},
\]
\end{itemize}
the constant term is a sum over evaluations:
\begin{equation}
 \left. \frac{zF(z)}{(z-P_1)\cdots(z-P_k)}\right|_{z^0} =\sum_{i=1}^k \frac{F(P_i)}{\displaystyle\prod_{j\not=i}(P_i-P_j)}.   
 \label{CLem}
\end{equation}
\end{lem}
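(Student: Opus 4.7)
The plan is to establish the lemma by combining partial fraction decomposition with the prescribed expansion of each simple pole. Since the poles $P_1,\ldots,P_k$ are simple and nonzero, the standard partial fraction identity (verified by clearing denominators and evaluating at $z=P_i$) gives
\[
\frac{1}{(z-P_1)\cdots(z-P_k)} = \sum_{i=1}^k \frac{1}{(z-P_i)\prod_{j\ne i}(P_i-P_j)}.
\]
Multiplying through by $zF(z)$, the problem reduces to computing the constant term of $\frac{zF(z)}{z-P_i}$ for each $i$ and then dividing by the constant $\prod_{j\ne i}(P_i-P_j)$.

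The second step is the extraction itself. Under the prescribed expansion $\frac{1}{z-P_i}=z^{-1}\sum_{n\ge 0}P_i^n z^{-n}$, one has
\[
\frac{z}{z-P_i} = \sum_{n\ge 0} P_i^n z^{-n}.
\]
Writing $F(z)=\sum_{m\ge 0}c_m z^m$, the only way to obtain a $z^0$ contribution in the product is to pair the $z^{-n}$ term of the first factor with the $z^n$ term of $F$. Hence the constant term of $\frac{zF(z)}{z-P_i}$ equals $\sum_{n\ge 0}c_n P_i^n=F(P_i)$, and summing over $i$ yields \eqref{CLem}.

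There is no real obstacle here; the only point that deserves care is that both sides of the partial fraction identity must be interpreted under the \emph{same} Laurent expansion convention (all poles expanded in negative powers of $z$), so that the algebraic identity is preserved after expansion. The hypothesis that $F(z)$ involves only nonnegative powers of $z$ ensures that the pairing producing the constant term is the unambiguous formal evaluation $F(P_i)$, and that no ``cross terms'' from positive powers of $z$ in the expansion of $(z-P_i)^{-1}$ enter the computation. This is exactly the setting in which the lemma is applied in \eqref{DSetup}.
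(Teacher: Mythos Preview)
Your proof is correct and follows exactly the approach indicated in the paper: the lemma is stated there with the remark that it ``is just an application of partial fraction decomposition'' (and is cited from \cite[Lemma 4.1]{RW}), which is precisely what you carry out in detail. Your additional care in noting that the expansion convention and the nonnegativity of the powers in $F(z)$ make the evaluation $F(P_i)$ well-defined is a helpful elaboration beyond what the paper records.
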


When computing the constant term iteratively, \eqref{Expand} may be insufficient to expand the evaluated poles.
To address this, we introduce variables $\ppp_q,\ppp_t$ such that
\begin{equation}
 |\ppp_q|,|\ppp_t|<|q^at^b|\hbox{ for all }(a,b)\in\ZZ_{\ge 0}^2.   
 \label{PCond}
\end{equation}
We then have
\begin{align}
\label{DE1}
   \mathbb{D}_{\epsilon_0-\epsilon_p}\mathbb{E}_\lambda\otimes e^\alpha 
   &=(1-qt)^{r-1}\frac{z_0^2}{z_p}
   \prod_{i\in\ZZ/r\ZZ}\left(\frac{z_{i+1}}{z_{i+1}-\ppp_tz_{i}}\right)
   \left(\frac{z_i}{z_{i}-\ppp_qz_{i+1}}\right)
   \frac{\displaystyle \prod_{\square\in R_i(\lambda)}\left(z_i-qt\chi_\square\right)}
   {\displaystyle \prod_{\square\in A_i(\lambda)}\left(z_i-\chi_\square\right)}\\
\label{DE2}
   &\quad\times
   \Omega\left[\sum_{i\in\ZZ/r\ZZ}X^{(i)}\left(\left(\frac{1}{M_0}\right)^{(i)}-B_\lambda^{(i)} -z_i\right)\right]
   \otimes e^\alpha\Bigg|_{z^0}\Bigg|_{(\ppp_q,\ppp_t)\mapsto (q,t)}.
\end{align}

\subsubsection{Evaluations from the constant term}
As we take constant terms using Lemma \ref{ConstTermLem}, each variable will specialize to a character, adding boxes to $B_\lambda^\bullet$ in \eqref{DE2}.
Note that
\[
\mathbb{E}_\mu=\Omega\left[-X^\bullet\frac{D_\mu^\bullet}{M}\right]
=\Omega\left[\sum_{i\in\ZZ/r\ZZ}X^{(i)}\left(\left(\frac{1}{M_0}\right)^{(i)}-B_\mu^{(i)}\right)\right]
\]
Thus, provided that these box additions to $\lambda$ yield a partition, we will indeed obtain some $\mathbb{E}_\mu$ with some coefficient.

Aside from $z_p$, every variable has at least a square in the numerator of \eqref{DE1} for which to implement Lemma \ref{ConstTermLem}.
On the other hand, $z_p$ has only a power of one, so we will begin with $z_p$ and proceed \textit{downward} in cyclic order.
There are three kinds of poles in \eqref{DE1} where $z_p$ is expanded in negative powers: 
\begin{itemize}
\item a \textit{$t$-pole}: $(z_p-\ppp_tz_{p-1})$,
\item a \textit{$q$-pole}: $(z_p-\ppp_qz_{p+1})$, or 
\item \textit{$\chi$-poles}: the poles $(z_p-\chi_\square)$ for some $\square\in A_p(\lambda)$.
\end{itemize}
If $z_p$ is evaluated at the $t$-pole, then note that upon sending $\ppp_t\mapsto t$, $z_{p-1}$ will not be evaluated at a removable box of some partition $\mu$.
Additionally, the $q$-pole of $z_{p-1}$ becomes unavailable:
\begin{equation}
 \frac{z_{p-1}}{z_{p-1}-\ppp_qz_p}\mapsto\frac{z_{p-1}}{z_{p-1}-\ppp_q\ppp_tz_{p-1}}=\frac{1}{1-\ppp_q\ppp_t}
 \label{TCancel}
\end{equation}
Similarly, if $z_{p}$ is evaluated at the $q$-pole, then $z_{p+1}$ will not be evaluated at a removable box and its $t$-pole disappears:
\begin{equation}
 \frac{z_{p+1}}{z_{p+1}-\ppp_tz_p}\mapsto\frac{z_{p+1}}{z_{p+1}-\ppp_q\ppp_tz_{p+1}}=\frac{1}{1-\ppp_q\ppp_t}   
\label{QCancel}
\end{equation}
Finally, the $\chi$-pole evaluation results in a box addition that yields another partition, and $z_p$ is sent to $\chi_p$.

Continuing downward in cyclic order, the options available to $z_i$ are similar.
One may be concerned that if $z_{i+1}$ was evaluated at a $t$-pole, then there would be poles of the form $(\ppp_t^k z_i-\chi_\square)$ for some $\square\in A_{i+k}(\lambda)$, but those are not expanded in negative powers of $z_i$ due to the enhanced conditions \eqref{PCond}.
Likewise, if $z_p$ was evaluated at the $q$-pole, then for the final variable $z_{p+1}$, we will not have to consider poles of the form $(\ppp_q^kz_{p+1}-\chi_\square)$ for some $\square\in A_{p+1-k}(\lambda)$.
Thus, accounting for the fact that if $z_{i+1}$ was evaluated at the $t$-pole, then $z_i$ no longer has a $q$-pole, we see that variables are evaluated into characters of their corresponding color, and they can be grouped into
\begin{itemize}
    \item vertical chains with the bottommost box given by an addable box of $\lambda$ and
    \item horizontal chains with the leftmost box given by an addable box of $\lambda$.
\end{itemize}
Because we are only adding one box per color, for our box additions to result in a partition $\mu$, we only need to worry about passing through the box with character $qt\chi_\square$ for some $\square\in R_j(\lambda)$.
This is precisely excluded by the zero $(z_j-qt\chi_\square)$ in \eqref{DE1}.
The only case where this evaluation is allowed is when $z_{j-1}$ and $z_{j+1}$ are evaluated at the boxes below and to the left of the box with character $qt\chi_\square$, respectively, i.e. our box additions round the corner.
In this case, $(z_j-qt\chi_\square)$ cancels with $(z_j-\ppp_qz_{j-1})$.
Thus, the end result for some string of evaluations is a multiple of $\mathbb{E}_\mu$ with $\lambda \subset_1 \mu$, and $z_i\mapsto\chi_i$.

\subsubsection{Calculation of coefficients}
Lastly, we pin down the coefficient $\mathfrak{d}_{\mu,\lambda}^{(p)}$.
Our main task here is to keep track of what happens to the factors
\[
\frac{z_i}{(z_i-\ppp_qz_{i+1})(z_i-\ppp_tz_{i-1})}.
\]
This depends on the adjacency among the three boxes $\square_{i+1}$, $\square_i$, and $\square_{i-1}$.
There are a few cases:
\begin{enumerate}
    \item \textit{All three are horizontally adjacent}: $(z_i-\ppp_qz_{i+1})$ is removed when taking the constant term and the rest leave behind $(1-qt)^{-1}$ as in \eqref{QCancel}.
    This will cancel with $(1-qt)$ in \eqref{DE1}.
    We note that $\square_i$ is not removable.
    \item \textit{All three are vertically adjacent}: this is similar to the previous case.
    \item \textit{Contiguous, forming an L}: $z_i$ was evaluated at a $\chi$-pole.
    The factors $z_i/(z_i-\ppp_tz_{i-1})$ become $(1-qt)^{-1}$ as in \eqref{QCancel} and cancels with a $(1-qt)$ from \eqref{DE1}.
    The remaining $(z_i-\ppp_qz_{i+1})$ becomes $\chi_i-\chi_\square$ for a new $\square\in A_i(\mu)$.
    Note that $\square_i$ is not removable.
    \item \textit{Contiguous, forming an \rotatebox[origin=c]{180}{L}}: $\square_i$ is diagonally adjacent to a removable corner of $\lambda$.
    Then $(z_i-\ppp_tz_{i-1})$ is removed when taking the constant term and $(z_i-\ppp_qz_{i+1})$ cancels out with $(z_i-qt\chi_\square)$ for $\square\in R_i(\lambda)$.
    Note that $\square_i\in R_i(\mu)$.
    We take the remaining $z_i$ and combine it with $(1-qt)$ in \eqref{DE1} to form $(\chi_i-qt\chi_i)$.
    \item \textit{$\square_i$ and $\square_{i+1}$ are vertically adjacent, $\square_{i-1}$ is disjoint}: $z_i$ is evaluated at a $\chi$-pole, for otherwise $\mu$ would not be a partition.
    The factors $z_i/(z_i-\ppp_qz_{i+1})$ become $(1-qt)^{-1}$ as in \eqref{TCancel} and cancels with a $(1-qt)$ from \eqref{DE1}.
    Note that $\square_i$ is not removable.
    Either the box above $\square_{i-1}$ is a new addable box of $\mu$ or the box left of $\square_{i-1}$ is a removable box of $\lambda$ that is no longer removable in $\mu$.
    Correspondingly, $(z_i-\ppp_tz_{i-1})$ creates or cancels the corresponding factor.
    \item \textit{$\square_i$ and $\square_{i+1}$ are horizontally adjacent, $\square_{i-1}$ is  disjoint}: $(z_i-\ppp_q z_{i+1})$ is removed when taking the constant term and $(z_i-\ppp_t z_{i-1})$ is as in the previous case.
    Note that $\square_i\in R_i(\mu)$ so we combine  $z_i$ and a $(1-qt)$ from \eqref{DE1} to form $(\chi_i-qt\chi_i)$.
    \item \textit{$\square_i$ and $\square_{i-1}$ are adjacent, $\square_{i+1}$ is disjoint}: this similar to the previous two cases.
    \item \textit{All three boxes are disjoint}: we handle both boxes like we did for $\square_{i-1}$ in (6).
    Note that $\square_i\in R_i(\mu)$, so we form a $(\chi_i-qt\chi_i)$.
\end{enumerate}
Altogether, we have
\begin{align}
\nonumber
 -\ddd_{\mu,\lambda}^{(p)}
 &=\frac{1}{1-qt}\left(\frac{\chi_0^2}{\chi_p}\right)
\prod_{i\in\ZZ/r\ZZ}
\frac{\displaystyle\prod_{\square\in R_i(\mu)}(\chi_i-qt\chi_\square)}
{\displaystyle\prod_{\square\in A_i(\mu)}(\chi_i-\chi_\square)}\\
 &=\frac{1}{qt-1}\left(\frac{\chi_0^2}{\chi_p}\right)
\prod_{i\in\ZZ/r\ZZ}
\frac{\displaystyle\prod_{\square\in R_i(\mu)}\left(1-q^{-1}t^{-1}\frac{\chi_i}{\chi_\square}\right)}
{\displaystyle\prod_{\square\in A_i(\mu)}\left(1-\frac{\chi_i}{\chi_\square}\right)}.
\label{PieriLast}
\end{align}
For \eqref{PieriLast}, we use
\begin{equation}
\prod_{i\in \ZZ/r\ZZ}\frac{\displaystyle\prod_{\square\in A_i(\lambda)}(-\chi_\square)}
{\displaystyle\prod_{\square\in R_i(\lambda)}(-qt\chi_\square)}=-1
\label{AddRem}
\end{equation}
(cf. the proof of \cite[Theorem 2.2]{GarsiaTesler}).
We obtain \eqref{E1} by 
dividing by $-\chi_0$ to obtain $d_{\mu,\lambda}^{(p)}$.

\subsection{The dual Pieri rule}
We will access $h_1^\perp[X^{(p)}]$ via adjunction with respect to $\langle -, -\rangle_{\ast}$.
Specifically, the adjoint to $h_1^\perp[X^{(p)}]$ is $-e_1\left[\epsilon_{-p}\frac{X^\bullet}{M^T}\right]$.
Observe, however, that we cannot directly use the Pieri coefficients \eqref{E1} with respect to the basis $\{H_\lambda\}$, since the adjoint will now act on the the dual basis $\{H_\lambda^\dagger\}$.

\subsubsection{Applying $\downarrow$}

Define $d_{\mu,\lambda}^{\dagger(p)}$ by
\begin{equation}
 -e_1\left[\epsilon_{-p}\frac{X^\bullet}{M^T}\right] H_\lambda^\dagger=
\sum_{\lambda\subset_1\mu}  d_{\mu,\lambda}^{\dagger(p)}H_\mu^\dagger .  
\label{DDagger}
\end{equation}

\begin{lem}\label{PieriDown}
    The coefficient $d_{\mu,\lambda}^{\dagger(p)}$ is given by
    \begin{align}
d_{\mu,\lambda}^{\dagger(p)}    &=\frac{1}{1-qt}
\left(\frac{\chi_p}{\chi_0}\right)
\prod_{i\in\ZZ/r\ZZ} 
\frac{\displaystyle\prod_{\square\in R_i(\mu)}\left( 1-qt\frac{\chi_\square}{\chi_i} \right)}
{\displaystyle\prod_{\square\in A_i(\mu)}\left(1-\frac{\chi_\square}{\chi_i}\right)}.       
\label{DDaggerForm}
    \end{align}
\end{lem}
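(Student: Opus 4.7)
The plan is to obtain the dagger Pieri coefficient by transporting the ordinary Pieri rule \eqref{E1} across the involution $\downarrow$, which sends $F[X^\bullet;q,t]\otimes e^\alpha$ to $F[-\iota X^\bullet;q^{-1},t^{-1}]\otimes e^{w_0\alpha}$, and which in particular takes $H_\lambda$ to $H_\lambda^\dagger$. Since $\downarrow$ is a ring homomorphism that inverts $(q,t)$, applying it to the Pieri rule for $H_\lambda$ will yield an identity of the form
\[
\downarrow\!\left(e_1\!\left[\epsilon_p\frac{X^\bullet}{M^T}\right]\right) H_\lambda^\dagger = \sum_{\lambda\subset_1\mu}d_{\mu,\lambda}^{(p)}(q^{-1},t^{-1})\,H_\mu^\dagger,
\]
so the main task is to identify the transformed multiplication operator.

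The first key step is the computation $M^T\big|_{(q,t)\mapsto(q^{-1},t^{-1})}=q^{-1}t^{-1}M$: factoring out $-q^{-1}\sigma^{-1}$ from $(1-q^{-1}\sigma^{-1})$ and $-t^{-1}\sigma$ from $(1-t^{-1}\sigma)$ produces two extra signs that cancel and the $\sigma^{\pm 1}$ factors that multiply to $1$. Combined with $X^\bullet\mapsto-\iota X^\bullet$, this gives
\[
\downarrow\!\left(\epsilon_p\frac{X^\bullet}{M^T}\right)=-qt\,\epsilon_p\frac{\iota X^\bullet}{M}.
\]
A direct coefficient comparison — both sides equal $\sum_{a,b\ge 0}q^at^bX^{(-p-a+b)}$ — then shows the identity $e_1[\epsilon_p\iota X^\bullet/M]=e_1[\epsilon_{-p}X^\bullet/M^T]$, so that $\downarrow e_1[\epsilon_p X^\bullet/M^T]=-qt\,e_1[\epsilon_{-p}X^\bullet/M^T]$. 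Comparing with \eqref{DDagger} yields the clean relation
\[
d_{\mu,\lambda}^{\dagger(p)}=\frac{1}{qt}\,d_{\mu,\lambda}^{(p)}(q^{-1},t^{-1}).
\]

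Finally, I would substitute the explicit formula \eqref{E1} and simplify using $\chi_i(q^{-1},t^{-1})=\chi_i^{-1}$: the prefactor $1/(qt-1)$ becomes $qt/(1-qt)$, which after division by $qt$ gives $1/(1-qt)$; the ratio $\chi_0/\chi_p$ becomes $\chi_p/\chi_0$; and each $\chi_i/\chi_\square$ becomes $\chi_\square/\chi_i$ while $q^{-1}t^{-1}$ becomes $qt$. This recovers \eqref{DDaggerForm} exactly.

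The main obstacle is book-keeping rather than conceptual: one must carefully track the sign and the factor $qt$ coming from $M^T\mapsto q^{-1}t^{-1}M$, and match the color index flip $p\leftrightarrow -p$ produced by $\iota$ against the swap $\sigma\leftrightarrow\sigma^{-1}$ inside $M$ versus $M^T$. The single subtle identity $e_1[\epsilon_p\iota X^\bullet/M]=e_1[\epsilon_{-p}X^\bullet/M^T]$ is what reconciles the indexing between the two formalisms, and once it is in place the rest reduces to an algebraic manipulation of the product over colors.
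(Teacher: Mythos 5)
Your proof is correct and is essentially the same transport-by-$\downarrow$ argument the paper uses; the only cosmetic difference is that you apply $\downarrow$ to the $H_\lambda$ Pieri rule and then compare with \eqref{DDagger}, whereas the paper applies $\downarrow$ directly to \eqref{DDagger}, but since $\downarrow$ is an involution these are the same computation read in opposite directions. The intermediate facts you use — $M^T\big|_{(q,t)\mapsto(q^{-1},t^{-1})}=q^{-1}t^{-1}M$, the color-flip identity $e_1[\epsilon_p\,\iota X^\bullet/M]=e_1[\epsilon_{-p}\,X^\bullet/M^T]$, and the resulting relation $d^{\dagger(p)}_{\mu,\lambda}=\tfrac{1}{qt}\,d^{(p)}_{\mu,\lambda}(q^{-1},t^{-1})$ — all check out and match what the paper records.
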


\begin{proof}
Note that
\begin{align*}
    \downarrow H_\lambda^\dagger&= H_\lambda,\\
    \downarrow \left(-e_1\left[\epsilon_{-p}\frac{X^\bullet}{M^T}\right]\right)&= qte_1\left[\epsilon_p\frac{X^\bullet}{M^T}\right].
\end{align*}
Applying $\downarrow$ to both sides of \eqref{DDagger}, we have
\[
\left(d_{\mu,\lambda}^{\dagger(p)}\right)\Bigg|_{(q,t)\mapsto (q^{-1},t^{-1})}
=qt d_{\mu,\lambda}^{(p)}.\qedhere
\]
\end{proof}

\subsubsection{Applying adjunction}
Our statement about taking adjoints is summarized by the following:
\begin{lem}\label{PieriAd}
    We have
    \[
    c_{\lambda,\mu}^{(p)}= \frac{N_\mu}{N_\lambda} d_{\mu,\lambda}^{\dagger(p)}.
    \]
\end{lem}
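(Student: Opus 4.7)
The plan is to invoke the adjunction flagged in the sentence preceding \eqref{DDagger}: the operator $-e_1\left[\epsilon_{-p}\tfrac{X^\bullet}{M^T}\right]$ is the adjoint of $h_1^\perp[X^{(p)}]$ with respect to the star pairing $\langle-,-\rangle_\ast$. With this in hand, the proof reduces to evaluating the single pairing $\langle H_\lambda^\dagger, h_1^\perp[X^{(p)}]H_\mu\rangle_\ast$ in two ways and comparing.

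First I would expand the inside of the pairing using the dual Pieri formula
\[
h_1^\perp[X^{(p)}]H_\mu=\sum_{\nu\subset_1\mu}c_{\nu,\mu}^{(p)}H_\nu,
\]
and then apply the orthogonality $\langle H_\lambda^\dagger, H_\nu\rangle_\ast = N_\lambda\delta_{\lambda,\nu}$. The core-matching required by the $\mathcal{W}$-level definition of $\langle-,-\rangle_\ast$ is automatic because $\lambda\subset_1\mu$ forces $\core(\lambda)=\core(\mu)$ by Proposition \ref{CoreQuotDec}(4). This gives $\langle H_\lambda^\dagger, h_1^\perp[X^{(p)}]H_\mu\rangle_\ast = c_{\lambda,\mu}^{(p)}N_\lambda$. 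Next I would push $-e_1[\epsilon_{-p}X^\bullet/M^T]$ onto $H_\lambda^\dagger$ via the stated adjunction and expand using \eqref{DDagger}, getting
\[
\langle H_\lambda^\dagger, h_1^\perp[X^{(p)}]H_\mu\rangle_\ast
=\sum_{\lambda\subset_1\nu}d_{\nu,\lambda}^{\dagger(p)}\langle H_\nu^\dagger, H_\mu\rangle_\ast
=d_{\mu,\lambda}^{\dagger(p)}N_\mu.
\]
Equating the two expressions immediately yields $c_{\lambda,\mu}^{(p)}=\tfrac{N_\mu}{N_\lambda}d_{\mu,\lambda}^{\dagger(p)}$.

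The only nonformal ingredient is the adjunction statement itself, which is presented in the preamble as a known fact. If it did need to be justified, the work would be to unpack the substitutions $X^\bullet\mapsto -\iota X^\bullet$ and $X^\bullet\mapsto M^T X^\bullet$ built into $\langle-,-\rangle_\ast$ and to track how $h_1^\perp[X^{(p)}]$ transforms under them, reducing to the standard Hall adjunction $\langle f^\perp G, H\rangle=\langle G, fH\rangle$; the appearance of $\epsilon_{-p}$ and the factor $1/M^T$ come out of the $-\iota$ and $M^T$ substitutions respectively, and the sign comes from the $-\iota$. Within the lemma, though, this is the only place one could get stuck—everything else is bookkeeping of cores, orthogonality, and coefficient extraction.
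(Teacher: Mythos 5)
Your argument is correct and matches the paper's proof essentially line for line: both compute $\frac{1}{N_\lambda}\langle H_\lambda^\dagger, h_1^\perp[X^{(p)}]H_\mu\rangle_\ast$ in two ways, first by expanding $h_1^\perp[X^{(p)}]H_\mu$ and using orthogonality, then by moving the operator across the star pairing via the stated adjunction and expanding $-e_1[\epsilon_{-p}X^\bullet/M^T]H_\lambda^\dagger$ through \eqref{DDagger}. The paper simply writes this as a single three-step chain of equalities, taking the adjunction as given, exactly as you do.
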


\begin{proof}
    We can pick out $c_{\lambda,\mu}^{(p)}$ with the following inner product:
    \begin{align*}
        c_{\lambda,\mu}^{(p)}
        &=\frac{1}{N_\lambda}\left\langle H^\dagger_\lambda, h_1^\perp[X^{(p)}]H_\mu\right\rangle_\ast\\
        &=\frac{1}{N_\lambda}\left\langle -e_1\left[\epsilon_{-p}\frac{X^\bullet}{M^T}\right] H_\lambda^\dagger, H_\mu\right\rangle_\ast\\
        &=\frac{N_\mu}{N_\lambda} d_{\mu,\lambda}^{\dagger(p)}.\qedhere
    \end{align*}
\end{proof}

\subsubsection{Consolidating hooks}
The ratio $N_\mu/N_\lambda$ can be unpacked using the following lemma:
\begin{lem}\label{HookLem}
Let $\blacksquare\in A_i(\lambda)$.
For $r>1$, we have
\begin{align}
\label{NCancel}
\left(\frac{N_{\lambda\cup\blacksquare}}{N_\lambda}\right)\frac{\displaystyle\prod_{\square\in R_i(\lambda\cup\blacksquare)}\left( 1-qt\frac{\chi_\square}{\chi_\blacksquare} \right)}{\displaystyle\prod_{\square\in A_i(\lambda\cup\blacksquare)}\left( 1-\frac{\chi_\square}{\chi_\blacksquare} \right)}
&=
\frac{\displaystyle\prod_{\square\in A_i(\lambda)}\left( 1-qt\frac{\chi_\blacksquare}{\chi_\square} \right)}{\displaystyle\prod_{\square\in R_i(\lambda)}\left( 1-\frac{\chi_\blacksquare}{\chi_\square} \right)}.
\end{align}
\end{lem}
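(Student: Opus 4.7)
The strategy is to peel off the trivial common factor from both sides, rewrite $N_{\lambda\cup\blacksquare}/N_\lambda$ as a product over ``crossing boxes'' in the row and column of $\blacksquare$ where the hook length crosses the $r$-divisibility threshold, and then telescope these crossings against the color-$i$ corners of $\lambda$.

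Step 1 (reduction via cancellation). Since $r>2$, the new addable corners of $\lambda\cup\blacksquare$ at $(a_0+1,b_0)$ and $(a_0,b_0+1)$ have colors $i-1$ and $i+1$ respectively, and the removable corners at $(a_0-1,b_0)$ and $(a_0,b_0-1)$ that cease to be removable likewise have colors $i\pm 1\neq i$. Hence $A_i(\lambda\cup\blacksquare)=A_i(\lambda)\setminus\{\blacksquare\}$ and $R_i(\lambda\cup\blacksquare)=R_i(\lambda)\cup\{\blacksquare\}$. The $\square=\blacksquare$ terms on both sides contribute $(1-qt)$ and cancel, reducing \eqref{NCancel} to showing
\[
\frac{N_{\lambda\cup\blacksquare}}{N_\lambda}=\frac{\displaystyle\prod_{\square\in A_i(\lambda)\setminus\{\blacksquare\}}(1-qt\chi_\blacksquare/\chi_\square)(1-\chi_\square/\chi_\blacksquare)}{\displaystyle\prod_{\square\in R_i(\lambda)}(1-\chi_\blacksquare/\chi_\square)(1-qt\chi_\square/\chi_\blacksquare)}.
\]

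Step 2 (direct expansion of $N_{\lambda\cup\blacksquare}/N_\lambda$). Only boxes in row $b_0$ or column $a_0$ of $\blacksquare=(a_0,b_0)$ see their hook length change, and each such hook increases by exactly $1$. Since $N_\lambda$ is a product over boxes with $h\equiv 0\pmod r$, the ratio factors over these ``crossing boxes'': boxes with $h_\lambda\equiv 0\pmod r$ leave the product (contributing an inverse $(1-q^{a+1}t^{-l})^{-1}(1-q^{-a}t^{l+1})^{-1}$), and boxes with $h_\lambda\equiv -1\pmod r$ enter it (contributing the analogous direct factor). Using the identity $c_{\text{leg-end}}-c_{\text{arm-end}}=h-1$ together with the fact that the leg-end of a crossing in column $a_0$ has content $c_\blacksquare-1$ (and the arm-end of a crossing in row $b_0$ has content $c_\blacksquare+1$), one sees that the other endpoint of each crossing hook has color exactly $i$.

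Step 3 (telescoping and matching). Consecutive crossings in column $a_0$ cancel in pairs: if $(a_0,b)$ is a leaving crossing whose arm-end $(\lambda_{b+1}-1,b)$ is not removable in $\lambda$ (equivalently, $\lambda_{b+2}=\lambda_{b+1}$), then $(a_0,b+1)$ is an entering crossing with identical arm and leg data, and their factors cancel exactly. The uncanceled residues come precisely from column-$a_0$ crossings whose arm-end is an actual removable corner of $\lambda$ of color $i$, and these are in bijection with $R_i(\lambda)$; the analogous analysis in row $b_0$ produces residues in bijection with $A_i(\lambda)\setminus\{\blacksquare\}$. Translating the surviving $(1-q^{a+1}t^{-l})(1-q^{-a}t^{l+1})$ factors into content-ratio form via the arm/leg-to-content dictionary then recovers the right-hand side verbatim.

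Main obstacle. The bulk of the work lies in Step 3: making the pairing of uncanceled residues with color-$i$ corners precise along the outer boundary of $\lambda$, and verifying the correct sign and placement of each factor. This can be handled directly by partitioning the row $b_0$ and column $a_0$ into intervals bounded by removable corners of $\lambda$ (for the column) or addable corners (for the row) of color $i$, and telescoping within each interval; an alternative is induction on the number of color-$i$ addable corners other than $\blacksquare$, using the identity \eqref{AddRem} as a consistency check on the signs and normalization.
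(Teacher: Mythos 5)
Your approach matches the paper's: both reduce by cancelling the common $(1-qt)$ factor so that the $\blacksquare$-terms disappear, then expand $N_{\lambda\cup\blacksquare}/N_\lambda$ over boxes in row $b_0$ and column $a_0$ whose hook length crosses the $r$-divisibility threshold, and finally cancel adjacent crossings while identifying the surviving residues with the corner factors of color $i$ on the right-hand side. Note that the hypothesis only needs $r>1$ (not $r>2$) for the color of the neighboring addable/removable cells of $\blacksquare$ to differ from $i$; and the ``main obstacle'' you flag in Step 3 is precisely where the paper's case analysis does the work --- it checks, for each crossing box $\square$ in row $b_0$, whether $\square_N$ (the northern end of $\square$'s column) is removable or sits below an addable corner, cancelling non-corner crossings in pairs against the shifted hooks of $\square'$ or ${}'\square$, and reading off the $(1-qt\chi_{\square_N}/\chi_\blacksquare)$, $(1-\chi_\blacksquare/\chi_{\square_N})$ factors at the corner crossings; the column case is symmetric.
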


\begin{proof}
We can rewrite the left-hand-side of (\ref{NCancel}) as
\begin{equation}
(1-qt)\left(\frac{N_{\lambda\cup\blacksquare}}{N_\lambda}\right)\frac{\displaystyle\prod_{\square\in R_i(\lambda)}\left( 1-qt\frac{\chi_\square}{\chi_\blacksquare} \right)}{\displaystyle\prod_{\substack{\square\in A_i(\lambda)\\\square\not=\blacksquare}}\left( 1-\frac{\chi_\square}{\chi_\blacksquare} \right)}
\label{NCancel2}
\end{equation}
(note that $r>1$ is important here).
Recall that
\begin{equation}
N_\lambda=\prod_{\substack{\square\in\lambda\\ h(\square)\equiv 0\hbox{ \footnotesize{mod} }r}}
   \left(1-q^{a_\lambda(\square)+1}t^{-l_\lambda(\square)}\right)
   \left(1-q^{-a_\lambda(\square)}t^{l_\lambda(\square)+1}\right).
\label{NormForm}
\end{equation}
For a box $\square \in \lambda$, let $\square_N$ be the northernmost box in the same column as $\square$. Let $\square'$ be the cell directly east, and ${}'\square$ the cell directly west.
Consider a hook corresponding to $\square \in \lambda$ in the same row as $\blacksquare$-- the hook starts at $\square_N$, moves down to $\square$, then moves east to $\blacksquare$.
\begin{itemize}
    \item If $h_\lambda(\square) \equiv 0 \mod r$ and $\square_N$ is not removable, then the factor in (\ref{NormForm}) cancels with the factor in $N_{\lambda \cup \blacksquare}$ corresponding to $\square'$.
    \item If $h_{\lambda \cup \blacksquare}(\square) \equiv 0 \mod r$, and $\square_N$ is not below and addable corner, then its corresponding factor cancels with the factor for ${}'\square$ in $N_{\lambda}$.
    \item If $h_\lambda(\square) \equiv 0 \mod r$ and $\square_N$ is removable, then we must have $\square_N \in R_i(\lambda)$.
    Furthermore, 
    \begin{align*}
    q^{-a_\lambda(\square)}t^{l_\lambda(\square)+1} = qt \chi_{\square_N}/\chi_{\blacksquare} && \text{ and } &&
    q^{a_\lambda(\square)+1}t^{-l_\lambda(\square)} = \chi_\blacksquare/\chi_{\square_N}.
    \end{align*}
    \item If $h_{\lambda \cup \blacksquare}(\square) \equiv 0 \mod r$ and $\square_N$ is below an addable corner $\overline{\square}_N$, then $\overline{\square}_N \in A_i(\lambda)$.
    Furthermore, 
    \begin{align*}
    q^{-a_{\lambda \cup \blacksquare}(\square)}t^{l_{\lambda \cup \blacksquare}(\square)+1} =  \chi_{\overline{\square}_N}/\chi_{\blacksquare} && \text{ and } &&
    q^{a_{\lambda \cup \blacksquare}(\square)+1}t^{-l_{\lambda \cup \blacksquare}(\square)} = qt\chi_\blacksquare/\chi_{\overline{\square}_N}.
    \end{align*}
\end{itemize}
A similar argument analyzes the hooks which start with $\blacksquare$ above $\square$. We see that after cancelling terms that appear, in (\ref{NCancel2}), one gets
\[
(1-qt)\frac{\displaystyle\prod_{\substack{\square\in A_i(\lambda)\\\square\not=\blacksquare}}\left( 1-qt\frac{\chi_\blacksquare}{\chi_\square} \right)}{\displaystyle\prod_{\square\in R_i(\lambda)}\left( 1-\frac{\chi_\blacksquare}{\chi_\square} \right)},
\]
which is the right-hand side of (\ref{NCancel}).
\end{proof}

\subsubsection{Proof of (\ref{H1})}
By Lemma \ref{PieriAd}, we need to compute
\[
c_{\lambda,\mu}^{(p)}=\frac{N_\mu}{N_\lambda}d_{\mu,\lambda}^{\dagger(p)}.
\]
We will show that
\begin{equation}
c_{\lambda,\mu}^{(p)}
=
\frac{1}{1-qt}
\left(\frac{\chi_p}{\chi_0}\right)
\prod_{i\in\ZZ/r\ZZ} 
\frac{\displaystyle\prod_{\square\in A_i(\lambda)}  \left(1-qt\frac{\chi_i}{\chi_\square}\right)}
{\displaystyle\prod_{\square\in R_i(\lambda)} \left( 1-\frac{\chi_i}{\chi_\square} \right)}.
\label{H1Almost}
\end{equation}

Recall that the index $i$ in $\square_i\in\mu/\lambda$ corresponds to its color.
Let us order the boxes $\square_{i_1}, \ldots, \square_{i_r}$ such that for all $k=1,\ldots, r$,
\[
\lambda(k)\coloneq \mu/\{\square_{i_1},\ldots, \square_{i_k}\}
\]
is a partition.
We will set $\lambda(0)=\mu$.
Thus, we have
\[
\mu=\lambda(0)\supset\lambda(1)\supset\cdots\supset\lambda(r)=\lambda.
\]
Our strategy will be to factor the coefficient
\[
c_{\lambda,\mu}^{(p)}=\frac{N_\mu}{N_\lambda}d_{\mu,\lambda}^{\dagger(p)}=d_{\mu,\lambda}^{\dagger(p)}\prod_{k=1}^{r}\frac{N_{\lambda(k-1)}}{N_{\lambda(k)}}
\]
and then apply Lemma \ref{HookLem} to $N_{\lambda(k-1)}/N_{\lambda(k)}$ and the binomials in $d_{\mu,\lambda}^{\dagger(p)}$ corresponding to $\chi_{i_k}$.
However, the factors in $d_{\mu,\lambda}^{\dagger(p)}$ are in terms of $R_{i_k}(\mu)$ and $A_{i_k}(\mu)$, not $R_{i_k}(\lambda(k-1))$ and $A_{i_k}(\lambda(k-1))$.
Additionally, even if we can apply Lemma \ref{HookLem}, the resulting factors are in terms of $R_{i_k}(\lambda(k))$ and $A_{i_k}(\lambda(k))$, not $R_{i_k}(\lambda)$ and $A_{i_k}(\lambda)$.

It turns out that both problems can be resolved simultaneously.
Note that we can immediately apply Lemma \ref{HookLem} at step $k=1$.
By induction, at step $k$, applying Lemma \ref{HookLem} yields
\begin{align}
\nonumber
&\left(\frac{N_{\lambda(k-1)}}{N_{\lambda(k)}}\right)
\prod_{\ell=k}^r
\frac{\displaystyle\prod_{\square\in R_{i_\ell}(\lambda(k-1))}\left( 1-qt\frac{\chi_\square}{\chi_{i_\ell}} \right)}{\displaystyle\prod_{\square\in A_{i_\ell}(\lambda(k-1))}\left( 1-\frac{\chi_\square}{\chi_{i_\ell}} \right)}\\
&=
\underbrace{\frac{\displaystyle\prod_{\square\in A_{i_k}(\lambda(k))}\left( 1-qt\frac{\chi_{i_k}}{\chi_\square} \right)}{\displaystyle\prod_{\square\in R_{i_k}(\lambda(k))}\left( 1-\frac{\chi_{i_k}}{\chi_\square} \right)}}_{(a)}
\,
\overbrace{\prod_{\ell=k+1}^r
\frac{\displaystyle\prod_{\square\in R_{i_\ell}(\lambda(k-1))}\left( 1-qt\frac{\chi_\square}{\chi_{i_\ell}} \right)}{\displaystyle\prod_{\square\in A_{i_\ell}(\lambda(k-1))}\left( 1-\frac{\chi_\square}{\chi_{i_\ell}} \right)}}^{(b)}.
\label{PieriInduct0}
\end{align}
We need to show that (\ref{PieriInduct0}) is equal to:
\begin{equation}
\underbrace{
\frac{\displaystyle\prod_{\square\in A_{i_k}(\lambda)}\left( 1-qt\frac{\chi_{i_k}}{\chi_\square} \right)}{\displaystyle\prod_{\square\in R_{i_k}(\lambda)}\left( 1-\frac{\chi_{i_k}}{\chi_\square} \right)}}_{(a')}
\,
\overbrace{
\prod_{\ell=k+1}^r
\frac{\displaystyle\prod_{\square\in R_{i_\ell}(\lambda(k))}\left( 1-qt\frac{\chi_\square}{\chi_{i_\ell}} \right)}{\displaystyle\prod_{\square\in A_{i_\ell}(\lambda(k))}\left( 1-\frac{\chi_\square}{\chi_{i_\ell}} \right)}}^{(b')}.
\label{PieriInduct}
\end{equation}
The factors $(a)$ and $(a')$ can only differ if $\square_{i_{k}+1}$ and/or $\square_{i_{k}-1}$ are in $\lambda(k)$.
If this is the case, we will say that $i_{k}+1$ and/or $i_k-1$ are \textit{after} $i_k$.
Likewise, $(b)$ and $(b')$ will only differ if $i_k+1$ and/or $i_k-1$ are after $i_k$.
Let $(a)/(a')$ and $(b)/(b')$ denote the corresponding quotients; we will show that
\[
\frac{(a)}{(a')}
\cdot \frac{(b)}{(b')} =1.
\]
The analysis requires us to consider many cases:
\begin{enumerate}
\item \textit{Only $i_k+1$ is after $i_k$ with $\square_{i_k}$ and $\square_{i_k+1}$ adjacent}: we have $(a)/(a')=(1-qt)$ due to $\square_{i_k}\in A_{i_k}(\lambda(k))$ but $\square_{i_k}\not\in A_{i_k}(\lambda)$, and $(b)/(b')=(1-qt)^{-1}$ due to $\square_{i_k+1}\in R_{i_k+1}(\lambda(k))$ but $\square_{i_k+1}\not\in R_{i_k+1}(\lambda(k-1))$.
\item \textit{Only $i_k+1$ is after $i_k$ with $\square_{i_k}$ and $\square_{i_k+1}$ disjoint}: here, either the square right of $\square_{i_k+1}$ is in $A_{i_k}(\lambda(k))$ but not in $A_{i_k}(\lambda)$ or the square below $\square_{i_k+1}$ is in $R_{i_k}(\lambda)$ but not in $R_{i_k}(\lambda(k))$.
In either case, $(a)/(a')=(1-t\chi_{i_k}/\chi_{i_k+1})$.
Likewise, either the box above $\square_{i_k}$ is in $A_{i_k+1}(\lambda(k-1))$ but not in $A_{i_k+1}(\lambda(k))$ or the box left of $\square_{i_k}$ is in $R_{i_k+1}(\lambda(k))$ but not in $R_{i_k+1}(\lambda(k-1))$.
Regardless, $(b)/(b')=(1-t\chi_{i_k}/\chi_{i_k+1})^{-1}$.
\item \textit{Only $i_k-1$ is after $i_k$}: this case is treated like cases (1) and (2).
\item \textit{Both $i_k+1$ and $i_k-1$ are after $i_k$ with only one of $\{\square_{i_k+1},\square_{i_k-1}\}$ adjacent to $\square_{i_k}$}: we treat the adjacent one as in case (1) and the disjoint one as in case (2).
\item \textit{Both $i_k+1$ and $i_k-1$ after $i_k$ with all boxes disjoint}: we treat both both $\square_{i_k+1}$ and $\square_{i_k-1}$ as in case (2).
\item \textit{Both $i_k+1$ and $i_k-1$ are after $i_k$ and all boxes are adjacent}: here, the boxes $\square_{i_k+1}$, $\square_{i_k}$, and $\square_{i_k-1}$ must form a \rotatebox[origin=c]{180}{L} shape.
The removable of all three yields a box in $R_{i_k}(\lambda)$ that is not in $A_{i_k}(\lambda(k))$, and as in case (1), $\square_{i_k}\in A_{i_k}(\lambda(k))$ but not in $A_{i_k}(\lambda)$.
In total, $(a)/(a')=(1-qt)^2$.
On the other hand, as in $(1)$, $\square_{i_k+1}$ and $\square_{i_k-1}$ yield a factor of $(1-qt)^{-1}$, giving $(b)/(b')=(1-qt)^{-2}$.\qed
\end{enumerate}

\section*{Acknowledgements}
We'd like to thank 
Hunter Dinkins for
developing and sharing a MAPLE package that allowed us to check our Pieri formulas. We would also like to thank Sean Griffin.

This work was supported by ERC consolidator grant No. 101001159 ``Refined invariants in combinatorics, low-dimensional topology and geometry of moduli spaces''.

\bibliographystyle{alpha}
\bibliography{Wreath}

\begin{thebibliography}{DIVW21}

\bibitem[BF14]{BezFink}
Roman Bezrukavnikov and Michael Finkelberg.
\newblock Wreath {M}acdonald polynomials and the categorical {M}c{K}ay correspondence.
\newblock {\em Camb. J. Math.}, 2(2):163--190, 2014.
\newblock With an appendix by Vadim Vologodsky.

\bibitem[BH13]{BergeronHaiman}
Fran\c{c}ois Bergeron and Mark Haiman.
\newblock Tableaux formulas for {M}acdonald polynomials.
\newblock {\em Internat. J. Algebra Comput.}, 23(4):833--852, 2013.

\bibitem[DIVW21]{Theta}
Michele D'Adderio, Alessandro Iraci, and Anna Vanden~Wyngaerd.
\newblock Theta operators, refined delta conjectures, and coinvariants.
\newblock {\em Adv. Math.}, 376:Paper No. 107447, 59, 2021.

\bibitem[DM22]{DadderioMellit}
Michele D'Adderio and Anton Mellit.
\newblock A proof of the compositional delta conjecture.
\newblock {\em Adv. Math.}, 402:Paper No. 108342, 17, 2022.

\bibitem[DR23]{DadderioRomero}
Michele D'Adderio and Marino Romero.
\newblock New identities for theta operators.
\newblock {\em Trans. Amer. Math. Soc.}, 376(8):5775--5807, 2023.

\bibitem[GM19]{Garsia-Mellit}
Adriano Garsia and Anton Mellit.
\newblock Five-term relation and {M}acdonald polynomials.
\newblock {\em J. Combin. Theory Ser. A}, 163:182--194, 2019.

\bibitem[GT96]{GarsiaTesler}
A.~M. Garsia and G.~Tesler.
\newblock Plethystic formulas for {M}acdonald {$q,t$}-{K}ostka coefficients.
\newblock {\em Adv. Math.}, 123(2):144--222, 1996.

\bibitem[Hai01]{HaimanHilb}
Mark Haiman.
\newblock Hilbert schemes, polygraphs and the {M}acdonald positivity conjecture.
\newblock {\em J. Amer. Math. Soc.}, 14(4):941--1006, 2001.

\bibitem[Hai03]{Haiman}
Mark Haiman.
\newblock Combinatorics, symmetric functions, and {H}ilbert schemes.
\newblock In {\em Current developments in mathematics, 2002}, pages 39--111. Int. Press, Somerville, MA, 2003.

\bibitem[HRW18]{DeltaConjecture}
J.~Haglund, J.~B. Remmel, and A.~T. Wilson.
\newblock The delta conjecture.
\newblock {\em Trans. Amer. Math. Soc.}, 370(6):4029--4057, 2018.

\bibitem[Mac15]{Mac}
I.~G. Macdonald.
\newblock {\em Symmetric functions and {H}all polynomials}.
\newblock Oxford Classic Texts in the Physical Sciences. The Clarendon Press, Oxford University Press, New York, second edition, 2015.
\newblock With contribution by A. V. Zelevinsky and a foreword by Richard Stanley, Reprint of the 2008 paperback edition.

\bibitem[OS24]{OSWreath}
Daniel Orr and Mark Shimozono.
\newblock Wreath {M}acdonald polynomials, a survey.
\newblock In {\em A glimpse into geometric representation theory}, volume 804 of {\em Contemp. Math.}, pages 123--169. Amer. Math. Soc., [Providence], RI, [2024] \copyright 2024.

\bibitem[OSW22]{OSW}
Daniel Orr, Mark Shimozono, and Joshua~Jeishing Wen.
\newblock Wreath {M}acdonald operators.
\newblock {\em arXiv preprint arXiv:2211.03851}, 2022.

\bibitem[Rom22]{RomeroTheta}
Marino Romero.
\newblock A proof of the theta operator conjecture.
\newblock {\em J. Combin. Theory Ser. A}, 185:Paper No. 105535, 7, 2022.

\bibitem[RW25]{RW}
Marino Romero and Joshua~Jeishing Wen.
\newblock {T}esler identities for wreath {M}acdonald polynomials.
\newblock {\em arXiv preprint arXiv:2505.01732}, 2025.

\bibitem[Wen25a]{WreathEigen}
Joshua~Jeishing Wen.
\newblock Wreath {M}acdonald polynomials as eigenstates.
\newblock {\em Selecta Math. (N.S.)}, 31(3):Paper No. 62, 2025.

\bibitem[Wen25b]{WreathOrth}
Joshua~Jeishing Wen.
\newblock Shuffle approach to wreath {P}ieri operators.
\newblock In {\em Macdonald theory and beyond}, volume 815 of {\em Contemp. Math.}, pages 59--89. Amer. Math. Soc., Providence, RI, [2025] \copyright 2025.

\end{thebibliography}

\end{document}